\newtheorem{Thm}{Theorem}
\newtheorem{Lem}[Thm]{Lemma}
\newtheorem{Cor}[Thm]{Corollary}
\newtheorem{Prop}[Thm]{Proposition}
\newtheorem{example}[Thm]{Example}
\newtheorem{remark}[Thm]{Remark}
\newtheorem{Def}[Thm]{Definition}
\def\resp{{\em resp.\ }}
\newcommand{\sslash}{\mathbin{/\mkern-6mu/}}
\newcommand{\wt}{\widetilde}
\newcommand{\red}{\color{red}}
\newcommand{\green}{\color{green}}
\begin{document}


\title{\bf An introduction to $(G,c)$-bands}
\author{L. Francone and B. Leclerc}

\date{}

\maketitle

\begin{abstract}
We give an introduction to our results on cluster structures for schemes of $(G,c)$-bands \cite{FL},
emphasizing their connections with seminal works of Frenkel and Reshetikhin in the 90's.
In particular, we construct using $(G,c)$-bands a discrete analogue of the $q$-difference Miura 
transformation of the loop group $LG$ \cite{FR0}, and we show that it calculates the $q$-characters 
of the finite-dimensional representations of the quantum affine algebra $U_q(\widehat{\mathfrak{g}})$ 
of the same $A$, $D$, $E$ type as $G$, thus verifying a conjecture of Frenkel and Reshetikhin \cite{FR}.
\end{abstract}

\setcounter{tocdepth}{3}
{\small \tableofcontents}

\section{Introduction}

In \cite{FL} we introduced the notion of a $(G,c)$-band associated 
with a simple and simply connected algebraic group $G$ of Cartan type $A$, $D$, $E$, and a Coxeter
element $c$ in its Weyl group. We proved that $(G,c)$-bands are the rational points of
an infinite-dimensional affine scheme $B(G,c)$ whose ring  of regular functions
$R(G,c)$ has the structure of a cluster algebra. This provided a geometric model for
the recently discovered cluster algebra structure in the Grothendieck ring of a certain
category of representations of shifted quantum affine algebras of the same Lie type as $G$ \cite{GHL}.

The scheme $B(G,c)$ has a natural $G$-action, and one can consider invariant subalgebras
of $R(G,c)$ for the induced action of various subgroups of $G$. We showed
that the invariant subalgebras $R(G,c)^U$ (where $U$ denotes a maximal unipotent subgroup of $G$)
and $R(G,c)^G$ are cluster subalgebras, also arising in connection with the representation
theory of quantum affine algebras. 

In fact, $R(G,c)^G$ is isomorphic to the Grothendieck ring
of a monoidal category $\mathcal{C}_\mathbb{Z}$ of finite dimensional representations of 
the quantum affine algebra $U_q(\widehat{\mathfrak{g}})$ of the same Cartan type as $G$.
There is a huge literature on these finite-dimensional representations because of their numerous connections with mathematical physics and integrable systems. 

In the 90's Frenkel and Reshetikhin \cite{FR} introduced the $q$-character $\chi_q(M)$
of a finite-dimen\-sio\-nal $U_q(\widehat{\mathfrak{g}})$-module $M$, which plays a similar role as the ordinary character $\chi(V)$ of a finite-dimensional $G$-module $V$. 
The map $M\mapsto \chi_q(M)$ induces an injective homomorphism from the Grothendieck ring to
a Laurent polynomial ring.
By construction, $\chi_q(M)$ is the generating function of the dimensions 
of the common generalized eigenspaces of $M$ under the action of a certain subalgebra of 
$U_q(\widehat{\mathfrak{g}})$ similar to the Cartan subalgebra of $\mathfrak{g}$.
However, a second more intriguing description of the $q$-character homomorphism is discussed in \cite[\S8]{FR}, involving $q$-difference analogues of the Drinfeld-Sokolov reduction and of the Miura transformation. 
As explained in \cite{FRS,SS} the $q$-difference Drinfeld-Sokolov reduction
relies on an analogue for the formal loop group $LG$ of a classical theorem of Steinberg for~$G$
\cite{S}. This theorem describes a cross-section of the set of regular conjugacy classes in~$G$, 
whose coordinate ring is naturally identified with the character ring of $G$.

It turns out that Steinberg's cross-section is also the starting point of the definition 
of a $(G,c)$-band. Moreover, a discrete analogue of the $q$-difference Miura transformation of $LG$ naturally arises in the context of $(G,c)$-bands, yielding a geometric interpretation of the $q$-character homomorphism. 
This prompted us to give an introduction to the constructions and results of \cite{FL} by taking a historical perspective and emphasizing their similarities with the inspiring ideas of the classical papers \cite{S},\cite{FR}.

In \S\ref{sect1}, we start by recalling Steinberg's cross-section theorem and its relation with
characters of $G$-modules. In \S\ref{sect2}, we review its loop analogue obtained in \cite{FRS,SS}.
We also discuss the $q$-difference Miura transformation and its conjectural relation with $q$-characters of
$U_q(\widehat{\mathfrak{g}})$-modules observed by Frenkel and Reshetikhin. After this preparation we begin our exposition on $(G,c)$-bands. In \S\ref{sect3}, we explain the definition of a $(G,c)$-band and the action of $G$
on the affine scheme $B(G,c)$. 
In \S\ref{sect-invariant-subalg}, we present our results 
on the invariant subalgebras $R(G,c)^G$ and $R(G,c)^U$, as well as their relations with the categories $\mathcal{C}_\mathbb{Z}$ and $O^+_\mathbb{Z}$. 
In \S\ref{sect-discrete-Miura}, we show that the $q$-character of a finite-dimensional 
$U_q(\widehat{\mathfrak{g}})$-module can be regarded as a cluster expansion with respect to the 
distinguished cluster of $R(G,c)^U$ (or $R(G,c)^{U^-}$). This follows from the generalized Baxter's relations of
\cite{FH}. We then construct a discrete analogue $H$ of the $q$-difference Miura transformation, and 
we show that the $q$-character homomorphism can be identified with the pullback $H^*$ of this morphism $H$ (Theorem~\ref{Thm-FR}). This verifies a conjecture of Frenkel and Reshitikhin for all types $A, D, E$ and all Coxeter elements~$c$.
We also give a discrete analogue of the cross-section theorem of Frenkel, Reshetikhin, Semenov-Tian-Shansky and Sevostyanov (Theorem~\ref{thm:cross-sec discrete}), and use it to prove Proposition~\ref{prop7}, which states that the classical characters
$Q^{(i)}_k$ of the Kirillov-Reshetikhin modules have a natural interpretation as special coordinate functions on Steinberg's 
cross-section in $G$. We find it remarkable that one can discover 
in a subvariety of a purely classical algebraic group, like Steinberg's cross-section, the shadows of key objects of the quantum affine representation theory, like the Kirillov-Reshetikhin modules. 
Finally, in \S\ref{sect4}, we describe the cluster structure of $R(G,c)$ and its connections
with Hernandez' category $\mathcal{O}^{\rm shift}$ of representations of shifted quantum affine algebras \cite{H}. 

The proofs of most of our results can be found in \cite{FL} and are therefore omitted. 
Notable exceptions are Proposition~\ref{prop6}, Proposition~\ref{prop7}, Theorem~\ref{Thm-FR}, and Theorem~\ref{thm:cross-sec discrete}, 
which are new, and for which we include complete proofs.

\section{Steinberg's cross-section theorem}\label{sect1}

\subsection{Conjugacy classes of regular elements and the character ring}\label{subsec1.1}

Let $G$ be a semisimple algebraic group of rank $r$ over $\mathbb{C}$. 
Following Steinberg \cite{S}, an element $g$ of $G$ is called \emph{regular} if its centralizer in $G$ has dimension $r$.
Equivalently, a regular element is one whose centralizer has the least possible dimension,
or whose conjugacy class has the greatest possible dimension. Note that regular elements are not assumed to be semisimple. 

Let $T$ be a maximal torus of $G$, $B$ a Borel subgroup containing $T$, $U$ its unipotent radical. Let $B^-$ be the Borel subgroup opposite to $B$ with respect to $T$, $U^-$ its unipotent radical. Let $W = N(T)/T$ be the Weyl group, with Coxeter generators $s_i\ (1\le i\le r)$ corresponding to the simple roots relative to $B$. Let $c$ be the product of the $s_i$'s in a given order (a Coxeter element of $W$), and $\overline{c}$ a representative of $c$ in $N(T)$. We define
\[
 A := U(c^{-1}) \overline{c},
\]
where $U(c^{-1})$ denotes the $r$-dimensional unipotent subgroup $U \cap (cU^-c^{-1})$. Then $A$ is an affine subspace of $G$ of dimension $r$.

In the sequel, we further assume that $G$ is simply connected. Then the fundamental representations of the Lie algebra of $G$ give rise to the fundamental irreducible $G$-modules. 
Let $\chi_i \in \mathbb{C}[G]\ (1\le i \le r)$ be the characters of these fundamental $G$-modules.

Steinberg proves the following theorem:

\begin{Thm}[\cite{S}, Theorem 1.4]\label{thm1}
\begin{enumerate}
\item The space $A$ is a cross-section of the collection of conjugacy classes of regular elements of $G$.
\item The map $\chi : G \to \mathbb{C}^r$ defined by 
\[
 \chi(g) = (\chi_1(g),\ldots, \chi_r(g))
\]
restricts to an isomorphism from $A$ onto $\mathbb{C}^r$.
\end{enumerate}
\end{Thm}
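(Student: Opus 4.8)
The plan is to deduce both parts at once from a single structural statement: that $\chi$ restricts to an isomorphism from $A$ onto $\mathbb{C}^r$, after first reinterpreting the target $\mathbb{C}^r$ as the adjoint quotient of $G$. First I would recall that, because $G$ is simply connected, Chevalley's restriction theorem identifies $\mathbb{C}[G]^G$ with $\mathbb{C}[T]^W$, and the latter is a polynomial ring freely generated by the fundamental characters; thus $\chi=(\chi_1,\ldots,\chi_r)$ \emph{is} the adjoint quotient morphism $G\to G/\!/G\cong\mathbb{C}^r$, whose fibres are unions of conjugacy classes. The classical fact I would invoke here is that each such fibre contains exactly one regular class. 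Granting this, the cross-section assertion (1) follows formally once (2) is proved: if $\chi|_A$ is a bijection onto $\mathbb{C}^r$ and every element of $A$ is regular, then $A$ meets each fibre in a single point lying in the unique regular class of that fibre, so $A$ is a cross-section of the regular classes, hit exactly once by surjectivity and injectivity of $\chi|_A$.

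The heart of the argument is therefore to prove (2). I would fix a reduced word $c=s_{i_1}\cdots s_{i_r}$ and use it to coordinatize the $r$-dimensional group $U(c^{-1})$: decomposing it as an ordered product of the root subgroups attached to the inversion set of $c$ yields an isomorphism $\mathbb{A}^r\to U(c^{-1})$, $(t_1,\ldots,t_r)\mapsto u(t)$, and hence $\mathbb{A}^r\to A$, $t\mapsto u(t)\,\overline{c}$. The crucial lemma is then a \emph{triangularity} statement: with a suitable matching of the coordinates $t_k$ to the fundamental characters, $\chi_{i_k}(u(t)\overline{c})=\epsilon_k\,t_k+p_k(t_1,\ldots,t_{k-1})$ for nonzero constants $\epsilon_k$ and polynomials $p_k$. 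If this holds, $\chi|_A\colon\mathbb{A}^r\to\mathbb{C}^r$ is unitriangular, hence a polynomial automorphism with polynomial inverse, which proves that $\chi|_A$ is an isomorphism onto $\mathbb{C}^r$.

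Establishing this triangularity is the step I expect to be the main obstacle. I would compute $\chi_{i_k}(u(t)\overline{c})=\mathrm{tr}\,(u(t)\overline{c})$ on the $i_k$-th fundamental $G$-module by working in a weight basis: the representative $\overline{c}$ permutes weight spaces according to $c$, while the factors $x_\beta(t)$ of $u(t)$ raise weights by positive roots, so a contribution to the trace must return to its starting weight. The point is to isolate, for each character, the coordinate in which it \emph{first} appears and to check that this appearance is linear with nonzero coefficient, all other coordinates entering only through lower-indexed terms. Here the Coxeter hypothesis is essential: because $c$ involves each simple reflection exactly once, the inversion set of $c$ is spread across all simple directions, which is precisely what lets the $r$ fundamental characters be put in bijection with the $r$ coordinates in a compatible, strictly triangular order. (The type $A$ case is a useful sanity check: $A$ is then the variety of companion matrices of fixed determinant, $\chi_i$ is the $i$-th elementary symmetric function of the eigenvalues, and the map to coefficients is already linear, so triangularity degenerates to the identity.)

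It remains to verify that every element of $A$ is regular, which I regard as a secondary point that the triangularity already delivers. Since the Jacobian of $\chi|_A$ is everywhere nonsingular, the restrictions of $(d\chi_1)_a,\ldots,(d\chi_r)_a$ to $T_aA$ are linearly independent at each $a\in A$, and a fortiori the differentials $(d\chi_1)_a,\ldots,(d\chi_r)_a$ are themselves linearly independent; by Steinberg's differential characterization of regular elements this forces $\dim Z_G(a)=r$, so $a$ is regular. (Alternatively one can argue directly, starting from the regularity of the Coxeter representative $\overline{c}$ and controlling the centralizer dimension along $A$.) Combining this with the isomorphism of (2) and the uniqueness of the regular class in each fibre closes the argument for (1).
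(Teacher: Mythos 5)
The first thing to note is that the paper does not prove Theorem~\ref{thm1} at all: it is quoted from Steinberg \cite{S} (Theorem 1.4), and the only proof-related material in the paper appears later, namely the Proposition in \S\ref{subsec:1.2} that the Bruhat cell $BcB$ consists of regular elements (\cite[\S 8.8]{S}), and the discussion in \S\ref{subsec_1.3} of Steinberg's relations $\chi_i=\theta_i+f_i$, $\theta_i=\chi_i+g_i$. Your overall architecture (regularity of the elements of $A$, invertibility of $\chi_{|A}$, uniqueness of the regular class in each fibre of the adjoint quotient) is indeed Steinberg's. But two of your steps have genuine gaps.

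First, the ``triangularity'' lemma, which you yourself identify as the main obstacle, is both unproven and stated in a form that is stronger than what Steinberg actually establishes. What \cite[\S 7.14]{S} gives (as recalled in \S\ref{subsec_1.3} of the paper) is the \emph{pair} of relations $\chi_i=\theta_i+f_i(\theta_j: j\neq i)$ and $\theta_i=\chi_i+g_i(\chi_j: j\neq i)$, where $\theta_i=\Delta_{\varpi_i,\varpi_i}|_A$ is the coordinate $t_i$; it is the existence of both directions, not a triangular ordering, that makes $\chi_{|A}$ a polynomial automorphism. A single relation $\chi_{i_k}=\epsilon_k t_k+p_k$ with $p_k$ independent of $t_k$ does not by itself give invertibility, and there is no reason the dependence of $f_{i_k}$ should be confined to the variables $t_{i_1},\dots,t_{i_{k-1}}$ preceding $i_k$ in the reduced word; the $E_8$ examples in \S\ref{subsec_1.3} already show that $f_i$ can involve several other indices, and Steinberg's need to prove the inverse relation separately indicates that no such strict ordering is available.

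Second, and more seriously, your derivation of the regularity of the elements of $A$ is backwards. Since each $\chi_i$ is a class function, $(d\chi_i)_a$ vanishes on the tangent space to the conjugacy class of $a$, so linear independence of the $r$ differentials only yields $\operatorname{codim}(\mathrm{class})\ge r$, i.e.\ $\dim Z_G(a)\ge r$ --- which holds for \emph{every} element of $G$ and says nothing about regularity. To conclude $\dim Z_G(a)=r$ you would need the converse implication of the differential criterion (``differentials independent $\Rightarrow$ regular''), which is itself one of the main theorems of \cite{S} and is derived there using the cross-section; invoking it here is circular. The correct route, recorded in \S\ref{subsec:1.2} of the paper, is the direct geometric statement that all of $BcB\supseteq A$ consists of regular elements, proved by a centralizer computation and not via differentials. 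A smaller caveat of the same kind: you should invoke only the uniqueness half of ``each fibre contains exactly one regular class'' (which rests on the conjugacy of regular unipotent elements), since the existence half is normally deduced \emph{from} the cross-section.
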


\begin{example}\label{exa1}
{\rm
Let $G = SL(n)$, of rank $r=n-1$, with its subgroup $B$ of upper triangular matrices. 
Let $c = s_1s_2\cdots s_{r}$ be the standard Coxeter element, so that
\[
\overline{c} =
\pmatrix{
 0 & 0  &\cdots & 0 &(-1)^{r}\cr
 1 & 0  & \cdots & 0 &0\cr
 0 & 1  & \cdots & 0& 0\cr 
 \vdots&\vdots&\ddots&\vdots & \vdots\cr
 0 & 0 & \cdots & 1& 0
}.
\]
It is easy to check that the subgroup $U(c^{-1})$ consists of all matrices
of the form
\[
u=
\pmatrix{
 1 & a_1  &\cdots & a_{r}\cr
 0 & 1  & \cdots & 0\cr
 \vdots&\vdots&\ddots& \vdots\cr
 0 & 0 & \cdots & 1
 },
\qquad (a_1,\ldots, a_{r} \in \mathbb{C}),
\]
and that $A$ is the affine $r$-dimensional subspace of $G$ consisting of elements of the form 
\begin{equation}\label{eq1}
a=
\pmatrix{
 a_1 & a_2  &\cdots & a_{r} &(-1)^{r}\cr
 1 & 0  & \cdots & 0 &0\cr
 0 & 1  & \cdots & 0& 0\cr 
 \vdots&\vdots&\ddots&\vdots & \vdots\cr
 0 & 0 & \cdots & 1& 0
},
\qquad (a_1,\ldots, a_{r} \in \mathbb{C}).
\end{equation}
We recognize one of the classical normal forms for a matrix which is regular, in the sense that its 
minimal and characteristic polynomials are equal. 

The fundamental $SL(n)$-modules are realized as the exterior powers $\Lambda^i(\mathbb{C}^n)$ of the 
defining representation $\mathbb{C}^n$, so that for $g\in SL(n)$ and $1\le i \le r$, 
\[
\chi_i(g) = \mathrm{tr}(\Lambda^i(g))
\]
is equal to the sum of the principal minors of size $i$ of $g$. Hence if $a$ is of the form (\ref{eq1}), we 
can readily calculate
\[
\chi_i(a) = (-1)^{i-1} a_i,\qquad (1\le i \le r),
\]
which shows that $\chi_{|A} : A \to \mathbb{C}^r$ is an isomorphism. \qed
}
\end{example}

For a general $G$, we will need to replace the minors of a matrix by the generalized minors
\[
 \Delta_{v(\varpi_i), w(\varpi_i)}\in \mathbb{C}[G],\qquad (1\le i \le r,\ v,w\in W)
\]
in the sense of Fomin and Zelevinsky \cite{FZ}. Here $\varpi_i$ denotes the $i$th fundamental weight.

The affine space $A$ has the natural coordinate functions
\[
 \theta_i := \Delta_{\varpi_i,\varpi_i},\ \qquad (1\le i \le r).
\]
Indeed, if $c=s_{i_1}\cdots s_{i_r}$, every element $a$ of $A$ can be written in a unique way as 
\[
a = x_{i_1}(t_1)\overline{s_{i_1}}\cdots x_{i_r}(t_r)\overline{s_{i_r}},\qquad (t_i\in \mathbb{C}),
\]
where  
$x_i: \mathbb{C} \to U$ is the additive one-parameter subgroup corresponding to the simple root $\alpha_i$,
and $\overline{s_i}$ is a fixed representative of $s_i$ in $N(T)$, suitably normalized as 
in \cite{FZ}. Then one can check that 
\[
\Delta_{\varpi_{i_j},\varpi_{i_j}}(a) = t_j,\qquad (1\le j \le r). 
\]
For example, if $G = SL(n)$ and $a$ is of the form (\ref{eq1}), we have 
\[
 \theta_i(a) = (-1)^{i-1} a_i = \chi_i(a), \ \qquad (1\le i \le r),
\]
so that $\theta_i$ coincides with the restriction of $\chi_i$ to $A$. In general, as explained 
by Steinberg \cite[\S7]{S}, the relation between the functions $\theta_i$ and $\chi_i$ is more subtle, and we will
return to it in \S\ref{subsec_1.3} below.

Steinberg's theorem implies that the coordinate ring $\mathbb{C}[A]$ of the affine subspace $A$ can be identified with the polynomial ring in the fundamental characters $\chi_i$, 
that is, as $G$ is simply connected, with the ring of regular functions on $G$ which are $G$-invariant for the conjugation action \cite[Theorem 6.1]{S}.

\subsection{Relation with the characters of $T$}\label{subsec:1.2}

Rather than expressing the character of a $G$-module $M$ as a polynomial in the $\chi_i$'s, one usually expresses it as a $W$-invariant polynomial in the characters of $T$ encoding the weight space decomposition of $M$.
This is achieved using the classical restriction theorem (see \cite[\S6]{S}):
\begin{Thm}\label{Thm-W-inv}
 The natural map from the ring of $G$-invariant regular functions on $G$ to the ring of $W$-invariant regular functions on $T$ is an isomorphism. 
\end{Thm}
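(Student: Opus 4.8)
The plan is to analyze the restriction homomorphism $\rho : \mathbb{C}[G]^G \to \mathbb{C}[T]^W$ directly, checking that it is well-defined, injective, and surjective, with essentially all the content residing in surjectivity. Well-definedness is immediate: for $w\in W$ with a representative $n\in N(T)$ and $t\in T$, the $W$-action sends $t$ to $ntn^{-1}$, so any $f\in\mathbb{C}[G]^G$ satisfies $f(ntn^{-1})=f(t)$, whence $f|_T\in\mathbb{C}[T]^W$. For injectivity I would use that the set of regular semisimple elements is dense and open in $G$, and that every such element is conjugate to an element of $T$. Thus if $f\in\mathbb{C}[G]^G$ restricts to $0$ on $T$, then by $G$-invariance $f$ vanishes on every conjugate of $T$, in particular on a dense set, so $f=0$.

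The main work, and the hard part, is surjectivity, for which I would invoke the classical structure of $\mathbb{C}[T]^W$. Since $G$ is assumed simply connected, one may identify $\mathbb{C}[T]$ with the group algebra $\mathbb{C}[P]$ of the weight lattice $P=X^*(T)$, and this is exactly where simple connectedness is used. The orbit sums $m_\lambda=\sum_{\mu\in W\lambda} e^\mu$, indexed by dominant weights $\lambda$, form a $\mathbb{C}$-basis of $\mathbb{C}[P]^W$. The restricted Weyl characters $\overline{\chi}_\lambda:=\chi_\lambda|_T$ are unitriangular against this basis in the dominance order, with leading term $m_\lambda$, so they also form a basis; moreover, writing $\lambda=\sum_i a_i\varpi_i$, one has $\overline{\chi}_\lambda=\prod_i \overline{\chi}_{\varpi_i}^{\,a_i}+\sum_{\mu<\lambda} c_\mu\, m_\mu$ with $c_\mu\in\mathbb{C}$. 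It follows by an induction on the dominance order that $\mathbb{C}[T]^W=\mathbb{C}[\overline{\chi}_1,\ldots,\overline{\chi}_r]$ is a polynomial ring in the fundamental characters $\overline{\chi}_i=\chi_i|_T$. Since each $\overline{\chi}_i$ lies in the image of $\rho$, being the restriction of $\chi_i\in\mathbb{C}[G]^G$, and these generate the target, $\rho$ is surjective.

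Finally I would assemble the pieces. By Theorem~\ref{thm1} and the discussion following it, $\mathbb{C}[G]^G=\mathbb{C}[\chi_1,\ldots,\chi_r]$ is itself a polynomial ring, so $\rho$ is a homomorphism of polynomial rings in $r$ generators carrying the generators $\chi_i$ to the polynomial generators $\overline{\chi}_i$ of $\mathbb{C}[T]^W$. Together with the injectivity established above, this forces $\rho$ to be an isomorphism. I expect the only genuinely delicate ingredient to be the polynomiality of $\mathbb{C}[P]^W$: the unitriangularity of the $\overline{\chi}_\lambda$ against the orbit sums $m_\lambda$ is the crux, and it is also what makes the simple connectedness hypothesis essential, since for non-simply-connected $G$ the lattice $X^*(T)$ is a proper sublattice of $P$ and the fundamental characters need no longer generate. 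Everything else in the argument is formal.
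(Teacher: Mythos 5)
Your proof is correct. Note that the paper itself does not prove this statement: it is quoted as the classical restriction theorem with a pointer to \cite[\S6]{S}, and your argument (injectivity via density of the union of the conjugates of $T$, surjectivity via the fact that $\mathbb{C}[T]^W=\mathbb{C}[X^*(T)]^W$ is a polynomial ring in the restricted fundamental characters, proved by unitriangularity of the Weyl characters against the orbit sums $m_\lambda$ in the dominance order) is essentially Steinberg's own proof, and also the standard one found in Bourbaki. One small caveat: your closing paragraph invokes $\mathbb{C}[G]^G=\mathbb{C}[\chi_1,\ldots,\chi_r]$, citing the discussion after Theorem~\ref{thm1}; in the paper that identification is attributed to \cite[Theorem 6.1]{S}, which in Steinberg's development is established \emph{together with} (indeed, essentially as a consequence of) the restriction theorem, so leaning on it here risks circularity. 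Fortunately that paragraph is superfluous: injectivity and surjectivity are already fully established in your first two paragraphs, and the isomorphism follows without any appeal to the polynomiality of $\mathbb{C}[G]^G$ (which then comes out as a corollary rather than an input).
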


In order to relate this picture with the $q$-difference Drinfeld-Sokolov reduction of \S\ref{sect2} below,
we are going to construct in a less conventional way the homomorphism $\mathbb{C}[A] \to \mathbb{C}[T]$ obtained by combining the discussion at the end of the previous section with Theorem~\ref{Thm-W-inv}. We start from the following proposition (see \cite[\S 8.8]{S}).

\begin{Prop}
The Bruhat cell $BcB$ consists of regular elements of $G$. 
\end{Prop}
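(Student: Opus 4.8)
The plan is to use Steinberg's characterization that an element $g$ is regular precisely when its centralizer has dimension $r$, equivalently when the fixed space of $\mathrm{Ad}(g)$ on $\mathfrak{g} = \mathrm{Lie}(G)$ satisfies $\dim \ker(\mathrm{Ad}(g) - \mathrm{id}) = r$; this dimension is always at least $r$, so the content is the reverse inequality. I would first reduce a general element of $BcB$ to a convenient normal form. From the Bruhat decomposition one has $BcB = U\,\overline{c}\,B = U\,\overline{c}\,TU$, and conjugating by the leading unipotent factor shows that every element of $BcB$ is conjugate to one of the form $g = \overline{c}\,t\,u$ with $t \in T$ and $u \in U$. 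Since regularity is conjugation-invariant, it suffices to bound $\dim\ker(\mathrm{Ad}(g)-\mathrm{id})$ for such $g$.

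The heart of the argument is the case $u = e$, i.e. $g = \overline{c}\,t$, where the Coxeter property enters decisively. Decompose $\mathfrak{g} = \mathfrak{t} \oplus \bigoplus_{\alpha \in \Phi} \mathfrak{g}_\alpha$. On $\mathfrak{t}$ the operator $\mathrm{Ad}(\overline{c}\,t)$ acts as the Coxeter element $c$; since the eigenvalues of $c$ on the reflection representation are the roots of unity $\exp(2\pi i\, m_j/h)$ attached to the exponents $1 \le m_j \le h-1$, none of them equals $1$, so $\mathrm{Ad}(\overline{c}\,t)$ has no fixed vector in $\mathfrak{t}$. On the root spaces, $\mathrm{Ad}(\overline{c}\,t)$ sends $\mathfrak{g}_\alpha$ to $\mathfrak{g}_{c\alpha}$ while scaling by $\alpha(t)$, so it permutes the lines $\mathfrak{g}_\alpha$ according to the action of $c$ on $\Phi$. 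By Kostant's theorem this action is free with all orbits of size $h$, hence there are exactly $r$ orbits; on each orbit the resulting weighted cyclic operator is conjugate to a companion matrix with simple eigenvalues, so it contributes at most $1$ to the $1$-eigenspace. Summing over the $r$ orbits gives $\dim\ker(\mathrm{Ad}(\overline{c}\,t)-\mathrm{id}) \le r$, whence equality and regularity of $\overline{c}\,t$ for every $t$.

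The main obstacle is the general unipotent factor $u \ne e$. Writing $\mathrm{Ad}(g) = \mathrm{Ad}(\overline{c}\,t)(\mathrm{id}+N)$ with $N$ nilpotent and height-raising, one cannot simply pass to an associated graded, because $c$ does not preserve the height grading on $\mathfrak{g}$ and so scrambles the filtration that would make the contribution of $u$ lower order; nor does upper semicontinuity of the centralizer dimension help directly, since the non-regular locus is closed and could a priori meet the cell away from the slice $u = e$. I would therefore follow Steinberg's finer analysis in \cite[\S8]{S}: either show directly that conjugation by $U$ carries $g$ into the cross-section $A = U(c^{-1})\overline{c}$ of Theorem~\ref{thm1}, by solving, one simple reflection at a time along a reduced word $c = s_{i_1}\cdots s_{i_r}$, the recursive equations that eliminate the Borel part, and then invoke that $A$ consists of regular elements; or, equivalently, prove that $g$ lies in only finitely many Borel subgroups and apply Steinberg's criterion that regularity is equivalent to the finiteness of this set. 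Establishing the solvability of the normalization equations uniformly over the whole cell is the delicate point, and it is precisely here that $c$ being a Coxeter element, so that each simple reflection occurs in $c$ and supplies the needed freedom, rather than an arbitrary element of $W$, is indispensable.
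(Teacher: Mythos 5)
The paper does not actually prove this proposition: it is quoted from Steinberg with the pointer ``(see \cite[\S 8.8]{S})'', so there is no internal argument to compare yours against. Judged on its own terms, your proposal is correct and complete only up to the slice $u=e$: the reduction $BcB=U\overline{c}TU$ and the conjugation to $\overline{c}tu$ are fine, and the Kostant-style eigenvalue count for $\mathrm{Ad}(\overline{c}t)$ (no fixed vectors in $\mathfrak{t}$ since $1$ is not an eigenvalue of $c$, plus one $1$-eigenvector at most per $c$-orbit on $\Phi$, with exactly $r$ orbits of size $h$) is a genuine proof that every element of $\overline{c}T$ is regular. But the general case $u\neq e$ --- which is the whole content of the proposition, since the non-regular locus is closed and could a priori meet the cell off the slice, as you yourself observe --- is not proved: you name the difficulty and then defer to ``Steinberg's finer analysis in \cite[\S8]{S}''. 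That is a citation, not an argument, and you explicitly flag that the solvability of the normalization equations ``is the delicate point''; so the proof has a gap exactly where the proposition has content.

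There is also a concrete flaw in the first of your two proposed completions. In the decomposition $BcB = U(c^{-1})\,\overline{c}\,T\,U$ the $T$-component of $g$ is invariant under conjugation by $U$ (conjugating $u_1\overline{c}tu_2$ by $v$ and re-normalizing the left unipotent factor only multiplies $u_2$ on the right by elements of $U$), while every element of $A=U(c^{-1})\overline{c}$ has trivial $T$-component. Hence $U$-conjugation alone can never carry $\overline{c}tu$ into $A$ when $t\neq e$; you must first conjugate by $T$ to kill $t$, which is possible precisely because $c-1$ is invertible on $\mathfrak{t}$, and only then run the recursive $U$-normalization on $U\overline{c}U$. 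Moreover, invoking ``$A$ consists of regular elements'' at the end of that route risks circularity, since in Steinberg that statement is part of the same theorem you are trying to prove; the alternative route via the criterion that regularity is equivalent to lying in finitely many Borel subgroups is logically cleaner, but you do not carry out the finiteness argument for points of $BcB$ either. Either complete one of these two arguments or, as the paper does, simply cite \cite[\S 8.8]{S} for the whole statement rather than for its hardest step.
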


This implies that the reduced double Bruhat cell 
\[
L^{c,e} := (U\bar{c}U)\cap B^-
\]
is contained in the 
subset of regular elements. It follows that we have a morphism of algebraic varieties $\psi : L^{c,e} \to A$
mapping an element $g\in L^{c,e}$ to its unique conjugate $\psi(g) \in A$. 

Recall from \cite[\S3]{YZ} that $L^{c,e}$ is biregularly isomorphic to a Zariski open subset of an affine space of dimension $r$. An element $g$ of the double Bruhat cell 
\[
G^{c,e} := (BcB)\cap B^-
\]
belongs to   
$L^{c,e}$ if and only if $\Delta_{c(\varpi_i),\varpi_i}(g) = 1$ for all $i = 1,\ldots, r$.
A natural coordinate system on $L^{c,e}$ is given by the functions 
$\Delta_{\varpi_i,\varpi_i}\ (1\le i\le r)$. 
In fact, by \cite{BFZ}, the coordinate ring of any (reduced) double Bruhat cell has a cluster algebra
structure, and the functions $\Delta_{\varpi_i,\varpi_i}$ turn out to be the frozen cluster variables of $\mathbb{C}[L^{c,e}]$. Moreover in this special case the cluster structure is trivial and all the cluster variables are frozen. To summarize, we have that
\[
 \mathbb{C}[L^{c,e}] = \mathbb{C}[\Delta_{\varpi_i,\varpi_i}^{\pm 1}\mid 1\le i \le r],
\]
and that the homomorphism of algebras 
\[
\begin{array}{rcl}
   \sigma^*:  \mathbb{C}[L^{c,e}] & \longrightarrow & \mathbb{C}[T]  \\
    \Delta_{\varpi_i,\varpi_i} & \longmapsto & e^{\varpi_i} 
\end{array}
\]
is an isomorphism.
Dually, we obtain an isomophism of varieties $\sigma : T \to L^{c,e}$. We will denote by $\phi := \psi\circ \sigma$
the composition $\phi : T \to A$. The dual map $\phi^*: \mathbb{C}[A] \to \mathbb{C}[T]$ allows to express elements of $\mathbb{C}[A]$, like the fundamental characters $\chi_i$ restricted to $A$, in terms of the characters of~$T$. 

\begin{example}\label{exa4}
{\rm
We continue Example~\ref{exa1}. The double coset $U\overline{c}U$ consists of all elements
of $SL(n)$ of the form
\[
\pmatrix{
 * & *  &\cdots & * & *\cr
 1 & *  & \cdots & * & *\cr
 0 & 1  & \ddots &  & *\cr 
 \vdots&\ddots&\ddots&\ddots & \vdots\cr
 0 & \cdots  & 0 & 1 & *
},
\]
where $*$ denotes an arbitrary element of $\mathbb{C}$.
Its subset $L^{c,e}$ consists of 
all elements of the form
\[
f = 
\pmatrix{
 \beta_1 & 0  &\cdots & \cdots & 0\cr
 1 & \beta_1^{-1}\beta_2  & \ddots &  & \vdots\cr
 0 & 1  & \ddots & \ddots & \vdots\cr 
 \vdots&\ddots&\ddots&\beta_{r-1}^{-1}\beta_r & 0\cr
 0 & \cdots & 0 & 1 & \beta_r^{-1}
},\qquad (\beta_i\in \mathbb{C}^*).
\]
Put 
\[
a:=\psi(f) =
\pmatrix{
 \theta_1 & -\theta_2  &\cdots & (-1)^{r-1}\theta_{r} &(-1)^{r}\cr
 1 & 0  & \cdots & 0 &0\cr
 0 & 1  & \ddots & & \vdots\cr 
 \vdots&\ddots&\ddots&0 & \vdots\cr
 0 & \cdots & 0 & 1& 0
}, \qquad (\theta_i\in \mathbb{C}).
\]
By a slight abuse of notation, let us also denote by $\theta_i$ (\resp $\beta_j$) the 
elements of $\mathbb{C}[A]$ (\resp $\mathbb{C}[L^{c,e}]$) whose evaluation at $a\in A$ (\resp $f\in L^{c,e}$)
is equal to the coordinate $\theta_i$ (\resp $\beta_j$).
Since $a$ and $f$ are conjugate, to express the $\theta_i$'s in terms of the $\beta_j$'s we can use the equalities
$\chi_i(a) = \chi_i(f) \ (1\le i \le r)$. For instance, for $G=SL(3)$ we get :
\[
\psi^*(\theta_1) = \beta_1 + \frac{\beta_2}{\beta_1} + \frac{1}{\beta_2},\qquad
\psi^*(\theta_2) = \beta_2 + \frac{\beta_1}{\beta_2} + \frac{1}{\beta_1},
\]
and for $G=SL(4)$:
\[
\psi^*(\theta_1) = \beta_1 + \frac{\beta_2}{\beta_1} + \frac{\beta_3}{\beta_2} + \frac{1}{\beta_3},\quad
\psi^*(\theta_2) = \beta_2 + \frac{\beta_1\beta_3}{\beta_2} + \frac{\beta_3}{\beta_1} +  \frac{\beta_1}{\beta_3} +\frac{\beta_2}{\beta_1\beta_3}+ \frac{1}{\beta_2},
\]
\[
\psi^*(\theta_3) = \beta_3 + \frac{\beta_2}{\beta_3} + \frac{\beta_1}{\beta_2} + \frac{1}{\beta_1}.
\]
Let us denote by $y_j := e^{\varpi_j} \in \mathbb{C}[T]$ the characters of $T$ corresponding to the fundamental 
weights~$\varpi_j$. Note that for $f\in L^{c,e}$, we have $\beta_j = \Delta_{\varpi_j,\varpi_j}(f)$.
This means that $\sigma^*(\beta_j) = y_j$. 
So replacing $\beta_j$ by $y_j$ in the above formulas gives an expression for
$\phi^*(\theta_i)$.
Recalling that for $G=SL(n)$ we have $\theta_i = \chi_i$, we recognize the classical expressions of the fundamental characters $\chi_i$ of $SL(n)$ as Laurent polynomials in the variables $y_j$. 
\qed 
}
\end{example}

\subsection{Beyond Steinberg}\label{subsec_1.3}

We still need to explain the precise relation between the functions $\theta_i$ and $\chi_i$ 
for a general $G$.
In \cite[\S 7.4]{S}, Steinberg notes that for $G = SL(n)$ these two functions are equal.
Then he writes : 
\emph{``A similar situation exists in the general case.''} 
More precisely, he shows \cite[\S7.14]{S} that there exist polynomials $f_i$ (\resp $g_i$) 
with integral coefficients in the variables $\theta_j$ (\resp $\chi_j$) with $j\not = i$ such that 
\[
 \chi_i = \theta_i + f_i,\qquad \theta_i = \chi_i + g_i.
\]
This is enough to prove that the map $\chi_{|A}$ of Theorem~\ref{thm1} is an isomorphism. 
But Steinberg does not give an explicit expression for the polynomials $f_i$ or $g_i$.

We will now provide a representation-theoretic interpretation of these relations when $G$ is of Cartan type 
$A$, $D$, $E$. This will lead to an algorithm for calculating the polynomials $g_i$.
In fact we are going to put this problem into a broader context by introducing an infinite 
family of functions generalizing the functions $\theta_i$.

From now on, we thus assume that $G$ is a simple and simply connected group of type $A$, $D$,~$E$.

\begin{Def}
For $i= 1,\ldots , r,$ and $k \ge 1$, we denote by $\theta_{i,k}\in \mathbb{C}[A]$ the function defined by
\[
 \theta_{i,k}(a) := \Delta_{\varpi_i,\varpi_i}\left(a^k\right),\qquad (a\in A).
\]
\end{Def}

The $\theta_{i,k}$'s satisfy the following remarkable system of equations.

\begin{Prop}\label{prop6}
Put $\theta_{i,0} := 1\ (1\le i \le r)$. We then have for $k\ge 1$ and $1\le i \le r$,
 \begin{equation}\label{eq2}
  \theta_{i,k}^2 = \theta_{i,k-1}\theta_{i,k+1} + \prod_{j :\ c_{ij} = -1} \theta_{j,k},
 \end{equation}
where $C = (c_{ij})_{1\le i,j\le r}$ denotes the Cartan matrix of $G$.
\end{Prop}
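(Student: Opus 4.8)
The plan is to read each $\theta_{i,k}$ as a matrix coefficient and to apply the Fomin--Zelevinsky exchange relation for generalized minors. Fix a highest weight vector $v_i$ of the fundamental module $V(\varpi_i)$, so that for a suitable invariant pairing $\Delta_{u(\varpi_i),w(\varpi_i)}(g)=\langle \overline{u}\,v_i,\ g\,\overline{w}\,v_i\rangle$, and in particular $\theta_{i,k}=\Delta_{\varpi_i,\varpi_i}(a^k)=\langle v_i,\,a^k v_i\rangle$. The four minors indexed by $u,w\in\{e,s_i\}$ are the entries of the $2\times 2$ matrix describing the action of $a^k$ on the two-dimensional $\mathfrak{sl}_2^{(i)}$-string $\langle v_i,\ \overline{s_i}\,v_i\rangle$, where $\overline{s_i}\,v_i$ has weight $s_i\varpi_i=\varpi_i-\alpha_i$. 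For every $g\in G$ the exchange relation of \cite{FZ} reads
\[
\Delta_{\varpi_i,\varpi_i}(g)\,\Delta_{s_i\varpi_i,s_i\varpi_i}(g)-\Delta_{s_i\varpi_i,\varpi_i}(g)\,\Delta_{\varpi_i,s_i\varpi_i}(g)=\prod_{j:\,c_{ij}=-1}\Delta_{\varpi_j,\varpi_j}(g),
\]
and specialising $g=a^k$ already produces the right-hand side $\prod_{j:\,c_{ij}=-1}\theta_{j,k}$ of (\ref{eq2}) for free.

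The essential input coming from $A$ is a pair of \emph{shift relations}, which I would derive from the two factorisations of a cross-section element through $\overline{c}$. Writing $a=u\,\overline{c}$ with $u=a\,\overline{c}^{\,-1}\in U(c^{-1})\subseteq U$, the defining property $U(c^{-1})=U\cap c\,U^-c^{-1}$ shows that $u':=\overline{c}^{\,-1}u\,\overline{c}\in U^-$, whence also $a=\overline{c}\,u'$. Because $v_i$ is fixed by $U$ and the covector $\langle v_i,-\rangle$ is fixed by $U^-$, one has $\Delta_{\varpi_i,\varpi_i}(g\,u)=\Delta_{\varpi_i,\varpi_i}(g)$ for $u\in U$ and $\Delta_{\varpi_i,\varpi_i}(u'\,g)=\Delta_{\varpi_i,\varpi_i}(g)$ for $u'\in U^-$. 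Since $a^{k}\overline{c}^{\,-1}=a^{k-1}u$ and $\overline{c}^{\,-1}a^{k}=u'a^{k-1}$, these invariances give at once
\[
\Delta_{c\varpi_i,\varpi_i}(a^k)=\Delta_{\varpi_i,c^{-1}\varpi_i}(a^k)=\theta_{i,k-1},\qquad \Delta_{c\varpi_i,c^{-1}\varpi_i}(a^k)=\theta_{i,k-2},
\]
and likewise for the neighbours $j$ of $i$. The mechanism is that peeling a factor $\overline{c}$ off a power of $a$ costs only a harmless unipotent $u$ or $u'$, which drops out of the minor and lowers the exponent by one.

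In view of the exchange relation, the proposition is equivalent to the bilinear identity
\[
\theta_{i,k}\,\Delta_{s_i\varpi_i,s_i\varpi_i}(a^k)-\Delta_{s_i\varpi_i,\varpi_i}(a^k)\,\Delta_{\varpi_i,s_i\varpi_i}(a^k)=\theta_{i,k}^2-\theta_{i,k-1}\theta_{i,k+1},
\]
that is, to evaluating on $A$ the three minors attached to $s_i$ (the boundary case $k=1$ being covered by $\theta_{i,0}=1$). When $G=SL(2)$ one has $s_i=c$, the three minors coincide with the shift minors above, and the identity reduces to the elementary three-term relation $\theta_{1,k}^2=\theta_{1,k-1}\theta_{1,k+1}+1$ obtained from $\det(a^k)=1$. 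For higher rank, however, $s_i\neq c$: the vector $a\,v_i$ leaks out of the $\mathfrak{sl}_2^{(i)}$-string, and the three $s_i$-minors are no longer pure shifts of $\theta_{i,\bullet}$.

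The remaining and genuinely delicate point, which I expect to be the main obstacle, is to compute these minors precisely enough to establish the displayed bilinear identity \emph{uniformly} for all types $A,D,E$. Already for $SL(3)$ one finds $\Delta_{s_i\varpi_i,\varpi_i}(a^k)=\theta_{i,k-1}$, while $\Delta_{\varpi_i,s_i\varpi_i}(a^k)$ and $\Delta_{s_i\varpi_i,s_i\varpi_i}(a^k)$ exceed $\theta_{i,k+1}$ and $\theta_{i,k}$ by correction terms that are proportional and therefore cancel in the bilinear combination; in higher rank the three minors are substantially more involved and controlling this cancellation is the crux. The natural interpretation is that the left-hand side is, up to sign, the coefficient of $a^k$ read in $\Lambda^2 V(\varpi_i)$ along the extremal weight $\varpi_i+s_i\varpi_i=2\varpi_i-\alpha_i=\sum_{j:\,c_{ij}=-1}\varpi_j$, and hence lives in the Cartan component $V\!\left(\sum_{j:\,c_{ij}=-1}\varpi_j\right)$. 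I would therefore combine the shift relations for $i$ and for each neighbour $j$ with the exchange relation to reduce the cancellation to a finite, weight-by-weight verification inside $V(\varpi_i)$; once this is carried out, (\ref{eq2}) follows directly.
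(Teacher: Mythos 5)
There is a genuine gap, and it sits exactly where you flag it yourself: the computation of the three minors $\Delta_{s_i\varpi_i,s_i\varpi_i}(a^k)$, $\Delta_{s_i\varpi_i,\varpi_i}(a^k)$, $\Delta_{\varpi_i,s_i\varpi_i}(a^k)$ is never carried out, and the proposed reduction to a ``weight-by-weight verification inside $V(\varpi_i)$'' is not a proof. The difficulty is not incidental: it comes from applying the Fomin--Zelevinsky identity at the wrong position. You take the instance of \cite[Theorem 1.17]{FZ} with $u=v=e$, whose off-diagonal minors involve $s_i\varpi_i$; as you correctly observe, for rank $>1$ these are not shifts of the $\theta_{i,\bullet}$, because the invariances $\Delta_{\varpi_i,\varpi_i}(gu)=\Delta_{\varpi_i,\varpi_i}(g)$ and $\Delta_{\varpi_i,\varpi_i}(u'g)=\Delta_{\varpi_i,\varpi_i}(g)$ only let you peel off a full factor $\overline{c}$, not a single $\overline{s_i}$. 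So your ``shift relations'' (which are essentially correct, and are the same mechanism the paper uses) simply do not reach the minors appearing on your left-hand side.

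The fix is to use the instance of the FZ identity at $u=v=s_{i_1}\cdots s_{i_{m-1}}$, the prefix of $c=s_{i_1}\cdots s_{i_r}$ ending just before $s_i=s_{i_m}$. Since $s_j\varpi_i=\varpi_i$ for $j\neq i$, this instance reads
\[
\Delta_{c\varpi_i,c\varpi_i}\,\Delta_{\varpi_i,\varpi_i}
=\Delta_{c\varpi_i,\varpi_i}\,\Delta_{\varpi_i,c\varpi_i}
+\prod_{j:\,c_{ij}=-1,\,a_{ij}=0}\Delta_{\varpi_j,\varpi_j}\ \prod_{j:\,c_{ij}=-1,\,a_{ij}=1}\Delta_{c\varpi_j,c\varpi_j},
\]
where $a_{ij}$ records whether $s_j$ precedes $s_i$ in $c$. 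Now \emph{every} minor occurring is of the form $\Delta_{\varpi_\bullet,\varpi_\bullet}$ or $\Delta_{c\varpi_\bullet,\bullet}$ or $\Delta_{\bullet,c\varpi_\bullet}$, and evaluating not at $a^k$ but at $a^{k+1}\overline{c}^{\,-1}$ and using $a=u\overline{c}=\overline{c}u'$ with $u\in U$, $u'\in U^-$ turns all of them into pure $\theta$'s: $\Delta_{c\varpi_i,c\varpi_i}$ and $\Delta_{\varpi_i,\varpi_i}$ both give $\theta_{i,k}$, $\Delta_{c\varpi_i,\varpi_i}$ gives $\theta_{i,k-1}$, $\Delta_{\varpi_i,c\varpi_i}$ gives $\theta_{i,k+1}$, and both kinds of neighbour minors give $\theta_{j,k}$. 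This yields (\ref{eq2}) with no residual cancellation to control. In short: your toolbox (the factorisations $a=u\overline{c}=\overline{c}u'$, the $U$/$U^-$ invariance of minors, and the FZ exchange relation) is the right one, but the argument only closes if you choose the $c$-adapted instance of the exchange relation and shift the evaluation point by $\overline{c}^{\,-1}$; as written, your proof leaves its central identity unproved.
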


\begin{proof}
This proposition is a degenerate version of \cite[Proposition 7.1]{FL}. For the convenience of the reader
we give a self-contained proof adapting the ideas of \cite{FL}.

Let $a\in A$. By definition of $A = U(c^{-1})\overline{c} = (U\overline{c})\cap(\overline{c}U^-) $, 
we can write $a = u\overline{c} = \overline{c}u'$ with $u\in U$ and $u'\in U^-$.
Using well-known properties of generalized minors, as described for instance in \cite[\S 2.3]{FZ}, we have for every $i = 1,\ldots,r$,
\[
\Delta_{\varpi_i,\varpi_i}(a^k) = \Delta_{\varpi_i,\varpi_i}(u'a^k) = 
\Delta_{\varpi_i,\varpi_i}(\overline{c}^{-1}\overline{c}u'a^k\overline{c}^{-1}\overline{c}) =
\Delta_{c(\varpi_i),c(\varpi_i)}(a^{k+1}\overline{c}^{-1}).
\]
Similarly
\[
\Delta_{\varpi_i,\varpi_i}(a^k) = 
\Delta_{\varpi_i,\varpi_i}(a^{k+1}\overline{c}^{-1}u^{-1}) =
\Delta_{\varpi_i,\varpi_i}(a^{k+1}\overline{c}^{-1}),
\]
\[
\Delta_{\varpi_i,\varpi_i}(a^{k-1}) = 
\Delta_{\varpi_i,\varpi_i}(\overline{c}^{-1}\overline{c}u'a^{k-1}) =
\Delta_{c(\varpi_i),\varpi_i}(a^{k}) 
\]
\[
\qquad \quad=
\Delta_{c(\varpi_i),\varpi_i}(a^{k+1}\overline{c}^{-1}u^{-1}) =
\Delta_{c(\varpi_i),\varpi_i}(a^{k+1}\overline{c}^{-1}),
\]
and 
\[
\Delta_{\varpi_i,\varpi_i}(a^{k+1}) =
\Delta_{\varpi_i,\varpi_i}(a^{k+1}\overline{c}^{-1}\overline{c}) =
\Delta_{\varpi_i,c(\varpi_i)}(a^{k+1}\overline{c}^{-1}).
\]

Let $1\le i,j\le r$ be such that $c_{ij} = -1$. We set $a_{ij} = 1$ if $s_j$ precedes $s_i$ in a reduced decomposition of $c$, and otherwise $a_{ij} = 0$.
Now, the following generalized minor identities follow from \cite[Theorem 1.17]{FZ}:
\[
\Delta_{c(\varpi_i),c(\varpi_i)}\Delta_{\varpi_i,\varpi_i} 
=
\Delta_{c(\varpi_i),\varpi_i}\Delta_{\varpi_i,c(\varpi_i)}
\qquad\qquad\qquad\qquad\qquad\qquad
\]
\begin{equation}\label{eq3n}
\qquad\qquad +\quad
\prod_{j :\ c_{ij} = -1, a_{ij} = 0} \Delta_{\varpi_j,\varpi_j}
\prod_{j :\ c_{ij} = -1, a_{ij} = 1} \Delta_{c(\varpi_j),c(\varpi_j)}.
\end{equation}
Evaluating these identities at the element $a^{k+1}\overline{c}^{-1}$ we get 
the desired relations (\ref{eq2}) evaluated at $a$. 
\end{proof}

It turns out that the functional relations of Proposition~\ref{prop6} are well-known in mathematical physics.
They first appeared in a paper of Kirillov and Reshetikhin \cite{KR}, and were later given the name of $Q$-system.
It was conjectured in \cite{KR} that the characters of certain finite-dimensional $\mathfrak{g}$-modules
(where $\mathfrak{g}$ is the Lie algebra of $G$) are solutions of this $Q$-system.
These $\mathfrak{g}$-modules, known as Kirillov-Reshetikhin modules, arise in the resolution of XXX-type integrable spin chains models by the quantum inverse 
scattering method (see the survey paper \cite{KNS}). In fact they are irreducible modules over the Yangian
$Y(\mathfrak{g})$ of $\mathfrak{g}$, but they can also be regarded as $\mathfrak{g}$-modules via the natural
homomorphism $U(\mathfrak{g}) \to Y(\mathfrak{g})$, and as such they are no longer irreducible in general.
The fact that the characters of the Kirillov-Reshetikhin modules satisfy the 
$Q$-system follows from a result on representations of quantum affine algebras proved much later by Nakajima \cite{N}.

Indeed, the same characters can also be regarded as the characters of the 
$U_q(\mathfrak{g})$-modules obtained by restriction of certain irreducible 
finite-dimensional $U_q(\widehat{\mathfrak{g}})$-mo\-du\-les. Here $U_q(\widehat{\mathfrak{g}})$ denotes the quantum affine algebra associated with $\mathfrak{g}$, and the quantum parameter $q$ is assumed to be a nonzero complex number,
not a root of unity. It is known that the Grothendieck rings of the categories of finite-dimensional representations of $Y(\mathfrak{g})$ and $U_q(\widehat{\mathfrak{g}})$ are isomorphic, and that the simple $Y(\mathfrak{g})$-modules and $U_q(\widehat{\mathfrak{g}})$-modules have the same parametrization and the same $q$-characters.
The notion of $q$-character was introduced independently by Knight for $Y(\mathfrak{g})$, and by Frenkel and 
Reshetikhin for $U_q(\widehat{\mathfrak{g}})$, but they turn out to coincide, see \cite{FR}.
The main features of $q$-characters will be recalled in \S\ref{subsec3.10}.

In the sequel, we will work with $U_q(\widehat{\mathfrak{g}})$-modules rather than with 
$Y(\mathfrak{g})$-modules, as in \cite{FL}. In particular we will regard the Kirillov-Reshetikhin modules as a family of simple $U_q(\widehat{\mathfrak{g}})$-modules, denoted by $W^{(i)}_{k, a}$. 
Here $k \ge 1$, $1\le i \le r$, and 
$a\in \mathbb{C}^*$ is the so-called spectral parameter. 
The Kirillov-Reshetikhin modules corresponding to $k=1$ are the quantum affine analogues of the fundamental
$G$-modules, and are therefore called fundamental $U_q(\widehat{\mathfrak{g}})$-modules. They are also denoted by $L(Y_{i,a})$
when one wants to emphasize their highest weight monomial $Y_{i,a}$. 
The restriction of $W^{(i)}_{k, a}$ to $U_q(\mathfrak{g})$ is
independent of $a$, and its character is denoted by $Q^{(i)}_k$. 

\medskip
We can now state:

\begin{Prop}\label{prop7}
The function $\theta_{i,k}$, regarded as a character of $\mathfrak{g}$, is equal to $Q^{(i)}_k$. 
In particular, the function $\theta_i = \theta_{i,1}$ is equal to the character of the fundamental 
$U_q(\widehat{\mathfrak{g}})$-modules $W^{(i)}_{1, a} = L(Y_{i,a})$ corresponding to the fundamental weight 
$\varpi_i$.
\end{Prop}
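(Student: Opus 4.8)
The plan is to use Proposition~\ref{prop6} to reduce the statement to its base case $k=1$. Solved for its top term, the $Q$-system (\ref{eq2}) reads
\[
\theta_{i,k+1} = \frac{\theta_{i,k}^2 - \prod_{j:\,c_{ij}=-1}\theta_{j,k}}{\theta_{i,k-1}},
\]
so that the whole family $(\theta_{i,k})_{1\le i\le r,\ k\ge 0}$ is determined recursively, and uniquely, by the initial data $\theta_{i,0}=1$ together with the single level $\theta_{i,1}=\theta_i$. The only point needing care is that the recursion stays inside an integral domain. Under the identifications $\mathbb{C}[A]\simeq \mathbb{C}[G]^G\simeq \mathbb{C}[T]^W$ furnished by Theorem~\ref{thm1} and Theorem~\ref{Thm-W-inv}, each $\theta_{i,k}$ is a genuine character, hence lives in the subring $\mathbb{C}[T]^W$ of the Laurent polynomial ring $\mathbb{C}[T]$, which is a domain; since every $\theta_{i,k-1}$ is a nonzero character it is a non-zero-divisor, and the displayed quotient is unambiguous. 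Consequently two solutions of (\ref{eq2}) agreeing at levels $k=0$ and $k=1$ coincide at all levels.

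On the representation side, the Kirillov--Reshetikhin characters $Q^{(i)}_k$ satisfy the same $Q$-system by the theorem of Nakajima \cite{N} recalled above, with $Q^{(i)}_0=1$. Hence, once we prove the base case $\theta_i=Q^{(i)}_1$ for every $i$, the uniqueness just established forces $\theta_{i,k}=Q^{(i)}_k$ for all $k$, which is the main assertion. The ``in particular'' clause is precisely this base case, since $W^{(i)}_{1,a}=L(Y_{i,a})$ and $Q^{(i)}_1$ is by definition the character of its restriction to $U_q(\mathfrak{g})$.

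It remains to identify the coordinate function $\theta_i=\Delta_{\varpi_i,\varpi_i}$, viewed as an element of $\mathbb{C}[T]^W$, with the classical character $Q^{(i)}_1$ of $L(Y_{i,a})$; this is the crux. I would compute the Laurent expansion of $\phi^*(\theta_i)$ in the variables $y_j=e^{\varpi_j}$ exactly as in Example~\ref{exa4}, using the isomorphism $\sigma:T\to L^{c,e}$ and the conjugation map $\psi:L^{c,e}\to A$, and compare it with the known decomposition of $L(Y_{i,a})$ as a $\mathfrak{g}$-module in type $A$, $D$, $E$. Both sides have unique highest term $e^{\varpi_i}$ of multiplicity one and differ from the fundamental character $\chi_i$ by a correction supported on strictly lower weights --- Steinberg's relation $\chi_i=\theta_i+f_i$ on the geometric side, and the lower $\mathfrak{g}$-constituents of $L(Y_{i,a})$ on the representation side. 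The main obstacle is to show that these two corrections agree term by term. I expect the most transparent route to run through the $q$-character machinery: the generalized Baxter relations of \cite{FH} express the fundamental $q$-characters in terms of the frozen variables $\Delta_{\varpi_i,\varpi_i}$, whose classical shadow should reproduce $\phi^*(\theta_i)$, yielding the matching that the discrete Miura transformation and cross-section theorem of \S\ref{sect-discrete-Miura} provide in full generality.
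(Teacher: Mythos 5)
Your reduction of the general case to $k=1$ via the $Q$-system is sound: both families lie in the integral domain $\mathbb{C}[T]^W$, satisfy the same recursion, and share the level-zero data $\theta_{i,0}=Q^{(i)}_0=1$ (note that for the induction you only need $\theta_{i,k-1}\neq 0$ in a domain, not that it is "a genuine character", which would be mildly circular). But the base case $\theta_i=Q^{(i)}_1$ is exactly where your argument stops being a proof. You correctly observe that $\psi^*(\theta_i)$ and the character of $L(Y_{i,a})$ both equal $\chi_i$ plus a correction supported on lower weights, but agreement of leading terms determines nothing about the corrections, and your plan for matching them ("I expect the most transparent route to run through the $q$-character machinery\dots yielding the matching that the discrete Miura transformation and cross-section theorem of \S\ref{sect-discrete-Miura} provide") is a deferral, not an argument. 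The missing ingredient is precisely the one the paper supplies in \S\ref{subsec3.11}: a proof that the diagonal restriction of the discrete Miura transform coincides with the conjugation map $\psi:L^{c,e}\to A$, i.e.\ that $H\circ\iota=\iota'\circ\psi$ for the diagonal embeddings $\iota,\iota'$. This rests on Theorem~\ref{thm:cross-sec discrete} and Corollary~\ref{cor:H is the quotient}: the section $\sigma:(L^{c,e})^\mathbb{Z}\to U^\mathbb{Z}$ is equivariant for the shift action of $\mathbb{Z}$, hence sends diagonal points to diagonal points, so $H(\iota(l_\circ))=\iota'(u_\circ l_\circ u_\circ^{-1})=\iota'(\psi(l_\circ))$. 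Only with this in hand can one combine $(\iota')^*\bigl(\theta^{(s)}_{i,k}\bigr)=\theta_{i,k}$ with Theorem~\ref{Thm-FR} (identifying $H^*$ with $\chi_q$) to conclude that $\psi^*(\theta_{i,k})$ is the specialized $q$-character of the Kirillov--Reshetikhin module, i.e.\ $Q^{(i)}_k$. Without that bridge there is no connection between the geometric function $\Delta_{\varpi_i,\varpi_i}(a^k)$ and any representation-theoretic character.

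A secondary remark: once that machinery is in place it yields the statement for all $k$ simultaneously, exactly as the paper's proof does. So your $Q$-system reduction, while correct, buys nothing --- the case $k=1$ is not easier than general $k$ by this route, since the fundamental modules $L(Y_{i,a})$ are already reducible over $\mathfrak{g}$ in general and their decompositions are exactly as hard to pin down as those of the higher Kirillov--Reshetikhin modules.
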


The proof of Proposition~\ref{prop7} will be given in \S\ref{subsec3.11} below. 

\begin{example}
{\rm
Using Proposition~\ref{prop7}, we can extract examples of non trivial relations between $\theta_i$ and $\chi_i$
from the rich literature on Kirillov-Reshetikhin modules. The following examples are taken from \cite[Section 6]{CP}.

For $G$ of type $D_4$, we have :
\[
\theta_1 = \chi_1,\quad \theta_2 = \chi_2 + 1,\quad \theta_3 = \chi_3,\quad \theta_4 = \chi_4.
\]
Here, the trivalent vertex of the Dynkin diagram of $G$ is labelled by 2 .
The second equality means that the fundamental module $W^{(2)}_{1, a}= L(Y_{2,a})$ of $U_q(\widehat{\mathfrak{g}})$, regarded
as a $U_q(\mathfrak{g})$-module, decomposes as the corresponding fundamental $U_q(\mathfrak{g})$-module plus
a copy of the trivial representation.

\medskip
For a generalization of these formulas to $G$ of type $D_n$, see \cite[Section 6.2]{CP}.

\medskip
For $G$ of type $E_8$, we have for the ``simplest'' vertices of the Dynkin diagram :
\[
\theta_1 = \chi_1+\chi_8 + 1,\quad \theta_7 = \chi_7 + 2\chi_8 + \chi_1 + 1,\quad \theta_8 = \chi_8 + 1.
\]
\qed}
\end{example}

Frenkel and Mukhin \cite{FM} have described an algorithm to calculate the $q$-characters of the 
Kirillov-Reshetikhin modules. In \cite{HL2} another algorithm was given, based on the cluster algebra structure
of the Grothendieck ring. The specialization $Y_{i,a} \mapsto y_i = e^{\varpi_i}$ in the $q$-character of a module $M$ 
gives the classical character of $M$, so using these algorithms and taking into account Proposition~\ref{prop7}, it is possible to express every function $\theta_{i,k}$ as a Laurent polynomial in the variables $y_i$. 
On the other hand, the fundamental characters $\chi_j$ can be expressed in terms of the $y_i$'s by means of the Weyl character formula. So by comparison one can in principle calculate the polynomials $g_i$ of Steinberg. But even with a computer, this would be a formidable task for $G$ of type $E_8$.
In fact the decomposition of $\theta_{i}$ as a sum of irreducible characters of $G$ is known for all types $E_{6,7,8}$
and all $i$'s, but it was computed by a different method, see \cite[Appendix A]{HKOTY}.


\section{The $q$-difference Drinfeld-Sokolov reduction and Miura transformation}\label{sect2}


In 1998, Frenkel, Reshetikhin, Semenov-Tian-Shansky and Sevostyanov published papers on the $q$-difference Drinfeld-Sokolov
reduction \cite{FRS, SS}, and noted that one of their results could be regarded as an analogue of Steinberg's cross-section theorem in which $G$ is replaced by its formal loop group.

Let $\mathbb{C}((z))$ be the field of Laurent series in one variable $z$, and let $LG$ denote the formal loop group whose $\mathbb{C}$-rational points are the  $\mathbb{C}((z))$-rational points of $G$. 
We will denote by $LB$, $LB^-$, $LU$, $LU^-$ the Borel subgroups of $LG$ and their unipotent subgroups. By analogy with the inclusion $A \subset U\overline{c}\,U$ considered in \S\ref{subsec:1.2}, one can consider the corresponding subspace 
\[
\mathcal{A} := (LU \overline{c}) \cap (\overline{c}LU^-)\subset LU\,\overline{c}\,LU. 
\]
Fix $q\in \mathbb{C}^*$ not a root of unity. The action of $G$ on itself by conjugation is replaced by the action of $LG$ on itself by \emph{$q$-gauge transformation}:
\[
 h(z)\cdot g(z) := h(qz)g(z)h(z)^{-1},\qquad (h(z),\ g(z) \in LG). 
\]
We then have the following analogue of the first part of Theorem~\ref{thm1}. 

Consider the restricted action of $LU$ on $LG$ by $q$-gauge transformation.
The double coset $\mathcal{M} := LU\,\overline{c}\,LU$ is stable under this action.
\begin{Thm}[\cite{FRS,SS}]\label{thm:cross-sec loop}
The action of $LU$ on $\mathcal{M}$ by $q$-gauge transformation is free, and $\mathcal{A}$ is a cross-section.   
\end{Thm}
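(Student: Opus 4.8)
The plan is to prove both assertions at once by showing that the action map
\[
\mu : LU \times \mathcal{A} \longrightarrow \mathcal{M}, \qquad (h,a)\longmapsto h\cdot a = h(qz)\,a(z)\,h(z)^{-1},
\]
is a bijection. Since this is a left action, surjectivity of $\mu$ says that every $LU$-orbit in $\mathcal{M}$ meets $\mathcal{A}$; injectivity of $\mu$ in the variable $h$ for fixed $a$ says that $\mathrm{Stab}(a)=1$ for all $a\in\mathcal{A}$, and conjugating stabilisers along orbits (using surjectivity) propagates freeness to all of $\mathcal{M}$; and injectivity of $\mu$ across different elements of $\mathcal{A}$ says that each orbit meets $\mathcal{A}$ in a single point. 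Thus freeness and the cross-section property are both encoded in the bijectivity of $\mu$.

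First I would record a normal form for $\mathcal{M}$. Write $\Phi^+ = \Phi^+_c \sqcup \Phi^+_{c,-}$, where $\Phi^+_{c,-} = \{\alpha\in\Phi^+ : c^{-1}\alpha\in\Phi^-\}$ is the $r$-element inversion set of $c^{-1}$; both subsets are closed, giving a factorisation $LU = LU_{c,-}\cdot LU_c$ into the corresponding loop root subgroups, with $\mathcal{A} = LU_{c,-}\,\overline{c}$ and $LU_c\,\overline{c}\subseteq \overline{c}\,LU$. A short manipulation of $g=u\overline{c}v\in LU\,\overline{c}\,LU$, splitting $u = u_1 u_2$ with $u_1\in LU_{c,-}$ and $u_2\in LU_c$ and absorbing $\overline{c}^{-1}u_2\overline{c}\in LU$ to the right, shows that every $g\in\mathcal{M}$ is uniquely $g = a_0\,w$ with $a_0 = u_1\overline{c}\in\mathcal{A}$ and $w\in LU$; uniqueness follows because $\overline{c}^{-1}LU_{c,-}\overline{c}\subseteq LU^-$ meets $LU$ trivially. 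This is the Bruhat-type factorisation $\mathcal{M} = \mathcal{A}\cdot LU$.

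The core of the argument is then a gauge-fixing computation. Let $\pi_c : LU\to LU_c$ be the projection attached to the factorisation above. Rewriting $h(qz)\,a_0$ in $\mathcal{A}\cdot LU$ form, one finds that $h\cdot g\in\mathcal{A}$ if and only if the $LU$-component of $h(qz)\,a_0\,w\,h(z)^{-1}$ is trivial, which unwinds to the single $q$-difference equation
\[
h(z) = \overline{c}^{-1}\,\pi_c\big(h(qz)\,u_1\big)\,\overline{c}\cdot w
\]
for the unknown $h\in LU$. Existence of a solution gives surjectivity of $\mu$ and the cross-section property; uniqueness gives injectivity, hence freeness and the single-point property. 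As a base sanity check, for $G=SL(2)$ one has $\Phi^+_c=\emptyset$, so $\pi_c$ is trivial and the equation collapses to $h = w$, recovering the purely algebraic gauge fixing.

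The main obstacle, where I expect the real work to lie, is the unique solvability of this $q$-difference equation in $LU$. The plan is to use a convex order on $\Phi^+$ adapted to $c$ (the order in which root subgroups occur along the bi-infinite word $\cdots c\,c\,c\cdots$), which makes the right-hand side triangular: in root-subgroup coordinates the $\alpha$-component of $h$ is determined by components $h_\beta(q^k z)$ with $\beta$ strictly later in the order, together with the known data $u_1$ and $w$. Since the reflection representation of $c$ has no nonzero fixed vector, every $c$-orbit on $\Phi$ contains a negative root, so the chain $\alpha\mapsto c\alpha\mapsto c^2\alpha\mapsto\cdots$ eventually leaves $\Phi^+_c$; the recursion therefore terminates and expresses each $h_\alpha$ explicitly in terms of $u_1$- and $w$-coordinates evaluated at the shifted arguments $q^k z$, yielding a unique $h\in LU$. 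The standing hypothesis that $q$ is not a root of unity is what robustly guarantees, should any residual scalar relation $\psi(z)=\lambda\,\psi(q^k z)+(\mbox{known})$ survive, that it has a unique solution in $\mathbb{C}((z))$. The delicate point is to verify that this triangular, terminating structure is preserved once the Chevalley commutator terms entering $\pi_c\big(h(qz)u_1\big)$ are taken into account.
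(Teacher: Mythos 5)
Note first that the paper does not actually prove Theorem~\ref{thm:cross-sec loop}: it is quoted from \cite{FRS,SS}, and the authors only supply a proof of its discrete analogue, Theorem~\ref{thm:cross-sec discrete}. Your proposal is, in substance, the same strategy as that discrete proof transported to the loop setting. Your normal form $\mathcal{M}=\mathcal{A}\cdot LU$ via the factorisation $LU = LU(c^{-1})\cdot LU_c$ is exactly Lemma~\ref{lem:normal forms Hum}\,(2) (with $U(c^{-1})=U\cap cU^-c^{-1}$ playing the role of your $LU_{c,-}$), and it is correctly justified: $\overline{c}^{-1}LU(c^{-1})\overline{c}\subseteq LU^-$ meets $LU$ trivially, giving uniqueness. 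Your reduction of the condition $h\cdot g\in\mathcal{A}$ to the single fixed-point equation $h(z)=\overline{c}^{-1}\pi_c\bigl(h(qz)u_1(z)\bigr)\overline{c}\,w(z)$ is the loop counterpart of the paper's recursion $u(s+1)=d(s)x(s)$ in Proposition~\ref{prop:truncated Gauge}, and it correctly encodes both freeness and the cross-section property in the unique solvability of that equation.

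The one step you defer --- termination of the recursion despite the nonabelian commutator bookkeeping --- is precisely what Lemma~\ref{lem:groups Hs} is designed to handle in the discrete case: one shows that after $k$ substitutions the unknown correction lies in $\bigcap_{0\le j\le k}c^{-j}Uc^{j}$ modulo known data, and that this subgroup is trivial once $k\ge\max_i m_i$ because the root sets $\Phi_{s,c}$ shrink to $\Phi^+\cap c\Phi^-$ (the Auslander--Reiten combinatorics in the paper, or your observation that every $c$-orbit on $\Phi$ meets $\Phi^-$). The part of your sketch that controls the interaction of the shrinking subgroup with $U(c^{-1})$ is exactly Lemma~\ref{lem:groups Hs}\,(3), so the gap is fillable by the same argument. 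One small correction: the hypothesis that $q$ is not a root of unity is not needed here. The recursion terminates after finitely many steps for purely root-theoretic reasons, so no residual scalar $q$-difference relation ever arises; this is consistent with the fact that the discrete analogue holds over an arbitrary $\mathbb{C}$-algebra with no genericity assumption.
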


The corresponding quotient map $\pi : \mathcal{M} \to \mathcal{A}$ 
is called the \emph{$q$-difference Drinfeld-Sokolov reduction}, because it can be seen as an analogue 
of a classical construction of Drinfeld and Sokolov, in which differential operators
are replaced by $q$-difference operators.

As in \S\ref{subsec:1.2}, one can consider the reduced double Bruhat cell
\[
 \mathcal{L}^{c,e} := \left(LU\,\overline{c}\,LU\right) \cap LB^- = \mathcal{M}\cap LB^-.
\]
The restriction of $\pi$ to $\mathcal{L}^{c,e}$ is called the \emph{$q$-difference Miura transformation},
and we will denote it by $\Psi : \mathcal{L}^{c,e} \to \mathcal{A}$.

\begin{example}\label{exa10}
{\rm
Let $G = SL(n)$. We keep the notation of Examples~\ref{exa1} and \ref{exa4}.
The double coset $LU\,\overline{c}\,LU$ consists of all elements
of $LG$ of the form
\[
\pmatrix{
 * & *  &\cdots & * & *\cr
 1 & *  & \cdots & * & *\cr
 0 & 1  & \ddots &  & *\cr 
 \vdots&\ddots&\ddots&\ddots & \vdots\cr
 0 & \cdots  & 0 & 1 & *
}.
\]
The cross-section $\mathcal{A}$ consists of all elements of the form 
\[
a(z)=
\pmatrix{
 \theta_1(z) & -\theta_2(z)  &\cdots & (-1)^{r-1}\theta_{r}(z) &(-1)^{r}\cr
 1 & 0  & \cdots & 0 &0\cr
 0 & 1  & \ddots & & 0\cr 
 \vdots&\vdots&\ddots&\ddots & \vdots\cr
 0 & 0 & \cdots & 1& 0
},
\ \ (\theta_1(z),\ldots, \theta_{r}(z) \in \mathbb{C}((z))).
\]

These matrices give rise to $G$-valued first order $q$-difference equations of the form
\begin{equation}\label{eq3}
g(qz) = a(z)g(z)\quad  \Longleftrightarrow \quad (D-a(z))g(z) = 0,\qquad (g(z)\in LG), 
\end{equation}
where $D$ is the $q$-difference operator $Dg(z) = g(qz)$.
In other words, the space $\mathcal{A}$ can be identified with a space of $G$-valued first order $q$-difference operators.
Note that $g(z)$ is a solution of (\ref{eq3}) if and only if the $q$-gauge action of $g(z)^{-1}$
on $a(z)$ gives the unit element of $LG$. Note also that this is equivalent to saying that
if $(\varphi_1(z),\ldots,\varphi_n(z))$ is the last row of the matrix $g(z)$, then the formal series 
$\varphi_i(z)$ form a fundamental system of solutions with $q$-Wronskian equal to 1 of the scalar 
$q$-difference equation
of order~$n$:
\[
\varphi(q^nz) = \theta_1(z)\varphi(q^{n-1}z) 
+ \cdots 
+(-1)^{n-2}\theta_{n-1}(z)\varphi(qz) + (-1)^{n-1}\varphi(z). 
\]
Hence, $\mathcal{A}$ can also be identified with a space of scalar $q$-difference operators of order~$n$.

The reduced double Bruhat cell $\mathcal{L}^{c,e}$ consists of 
all elements of the form
\[
f(z) = 
\pmatrix{
 \beta_1(z) & 0  &\cdots & \cdots & 0\cr
 \cr
 1 & \displaystyle\frac{\beta_2(z)}{\beta_1(z)}  & \ddots &  & \vdots\cr
 0 & 1  & \ddots & \ddots & \vdots\cr 
 \cr
 \vdots&\ddots&\ddots&\displaystyle\frac{\beta_r(z)}{\beta_{r-1}(z)} & 0\cr
 0 & \cdots & 0 & 1 & \displaystyle\frac{1}{\beta_r(z)}
},\qquad (\beta_i(z)\in \mathbb{C}((z))^\times).
\]
Here, for $1\le i \le r = n-1$, we have $\Delta_{\varpi_i,\varpi_i}(f(z)) = \beta_i(z)$.

If $a(z)= \Psi(f(z))$, an elementary calculation allows to express the coordinates $\theta_i(z)$ of $a(z)$
in terms of the coordinates $\beta_i(z)$ of $f(z)$.
For example if $G = SL(2)$ we obtain:
\[
 \theta_1(z) = \beta_1(z) + \frac{1}{\beta_1(qz)},
\]
and for $G = SL(3)$:
\[
\theta_1(z) = \beta_1(z) + \frac{\beta_2(qz)}{\beta_1(qz)} + \frac{1}{\beta_2(q^2z)},
\qquad
\theta_2(z) = \beta_2(z) + \frac{\beta_1(z)}{\beta_2(qz)} + \frac{1}{\beta_1(qz)}.
\]
\qed
}
\end{example}

Thus we see in this example that the $q$-difference Miura transformation $\Psi$ is given by a $q$-difference loop analogue of the morphism $\psi$ of \S\ref{subsec:1.2} expressing the class functions $\theta_i$ in terms of the characters of $T$. Moreover, replacing $\beta_j(q^kz)$ by $Y_{j,q^{2k-j}}$ we get precisely the expressions
of the fundamental $q$-characters of $U_q(\mathfrak{\widehat{g}})$ in type $A$ (compare Example~\ref{ex-29} below). In fact, Frenkel and Reshetikhin note in \cite[\S 8.7]{FR} that, at least conjecturally, the Grothendieck ring of the category of finite-dimensional $U_q(\mathfrak{\widehat{g}})$-modules ``\emph{can be obtained by the $q$-difference Drinfeld-Sokolov reduction. This gives one an
alternative method to find the $q$-characters of irreducible representations''.}
After illustrating this method in the case of $G=SL(n)$, they conclude their paper by predicting that ``\emph{one can probably use the geometry of the orbit space $\mathcal{A} = \mathcal{M}/LU$ to study this question\footnote{The question of calculating the irreducible $q$-characters. In \cite{FR}, the orbit space is denoted by $M^J_{n,q}/LN$.}. 
This method can also be applied to other simply-laced~$\mathfrak{g}$''}.

To summarize, Frenkel and Reshetikhin argue that the $q$-difference Miura transformation  
could be regarded as a $q$-analogue for loop groups of the second part of Steinberg's theorem,
in which fundamental characters of $G$ get replaced by fundamental $q$-characters of
$U_q(\widehat{\mathfrak{g}})$.

\section{Bands}

\label{sect3}

\subsection{The subcategory $\mathcal{C}_\mathbb{Z}$}

The category $\mathcal{C}$ of finite-dimensional $U_q(\widehat{\mathfrak{g}})$-modules 
has attracted a lot of attention during the past 30 years, see for example \cite{CP,FR,N1,K}.
This category is huge: its simple objects are parametrized by the monoid of dominant loop-weights 
\[
 \widehat{P}_+ := \bigoplus_{1\le i \le r;\ z\in \mathbb{C}^*} \mathbb{N} (\varpi_i, z),
\]
whose parameter set contains discrete parameters $i$ as well as continuous parameters~$z$.
However, as explained in \cite{HL1}, one can consider a monoidal subcategory $\mathcal{C}_\mathbb{Z}\subset \mathcal{C}$
whose simple objects are parametrized by a discrete set, and which already contains 
all the interesting combinatorial information about simple objects of $\mathcal{C}$ and their
tensor products.

This suggests that one could replace the cross-section $\mathcal{A}$ of $LU\,\overline{c}\,LU$
by a discrete analogue of the form $A^\mathbb{Z}$, whose coordinate ring would naturally be identified with the complexified Grothen\-dieck ring of the subcategory $\mathcal{C}_\mathbb{Z}$. This leads us to the definition of bands, which we shall now explain.

\subsection{The scheme $B(G,c)$}

Let $a:=(a(s))_{s\in\mathbb{Z}}$ denote an element of the scheme theoretic product $A^\mathbb{Z}$ of countably many copies of $A$. As in Example~\ref{exa10} above, we can attach to $a$ a
$G$-valued first order linear difference equation
\begin{equation}\label{eq5}
 g(s) = a(s)g(s+1),\qquad (s\in \mathbb{Z}),
\end{equation}
with unknown $b:=(g(s))_{s\in\mathbb{Z}}$ in $G^\mathbb{Z}$. A solution $b$ of (\ref{eq5}) is called a \emph{$(G,c)$-band} for~$a$. 
(Recall that by definition,
the affine space $A$ depends on the choice of a Coxeter element $c$). 
The set of all $(G,c)$-bands $b$ for all possible $a\in A^\mathbb{Z}$ is denoted by $B(G,c)$. In other words:
\begin{Def}\label{def-band}
An element of $B(G,c)$ is a sequence $b=(g(s))_{s\in \mathbb{Z}}$ of elements $g(s)\in G$ such that 
\[
 g(s)g(s+1)^{-1} \in A,\qquad (s\in\mathbb{Z}).
\]
\end{Def}
Clearly, given $a\in A^\mathbb{Z}$ and $g\in G$, there exists a unique $(G,c)$-band $b=(g(s))_{s\in\mathbb{Z}}$ for $a$ satisfying 
the initial condition $g(0) = g$. It is given by 
\begin{eqnarray*}
&& g(0) = g,\quad g(-1) = a(-1)g,\quad g(-2) = a(-2)a(-1)g,\quad \ldots
 \\ 
&& g(1) = a(0)^{-1}g,\quad g(2) = a(1)^{-1}a(0)^{-1}g,\quad \ldots
\end{eqnarray*}
It follows that the map $b \mapsto (g(0), a)$ is a bijection from $B(G,c)$ onto $G\times A^\mathbb{Z}$. 
Now since $G$ is an algebraic variety and $A^\mathbb{Z}$ is an affine space of infinite dimension, we can use this
bijection to prove that: 
\begin{Thm}[\cite{FL}]
The set $B(G,c)$ is endowed with the structure of an infinite-dimen\-sio\-nal affine integral scheme. 
The ring $R(G,c)$ of regular functions on $B(G,c)$ is a unique factorization domain, isomorphic to a
polynomial ring in countably many variables with coefficients in $\mathbb{C}[G]$.
\end{Thm}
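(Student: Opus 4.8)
The plan is to exploit the bijection $b\mapsto(g(0),a)$ from $B(G,c)$ onto $G\times A^{\mathbb Z}$ established just above the theorem statement, and to transport the scheme structure of the right-hand side to $B(G,c)$. First I would make precise what is meant by the scheme-theoretic product $A^{\mathbb Z}$: since $A$ is an affine space of dimension $r$, its coordinate ring is a polynomial ring $\mathbb C[\theta_1,\dots,\theta_r]$, and $A^{\mathbb Z}$ is the affine scheme $\operatorname{Spec}$ of the polynomial ring in the countable family of variables $\{\theta_i^{(s)}\mid 1\le i\le r,\ s\in\mathbb Z\}$, obtained as the colimit (filtered union) of the finite sub-products $A^{[-N,N]}$. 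This is an honest affine scheme, integral because a polynomial ring over $\mathbb C$ in any set of variables is an integral domain, and it is a UFD by the standard fact that polynomial rings over a field — even in infinitely many variables — are unique factorization domains (every element lies in a finitely generated sub-polynomial-ring, where the result is classical, and the UFD property is preserved under such filtered unions of factorially closed subrings).

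The second ingredient is the factor $G$. Here I would not use a coordinate ring that is a polynomial ring, but rather the honest coordinate ring $\mathbb C[G]$ of the affine variety $G$. Because $G$ is simple and simply connected, it is in particular a smooth affine variety, and $\mathbb C[G]$ is a finitely generated integrally closed integral domain; moreover for a simply connected semisimple group $\operatorname{Pic}(G)$ is trivial, so $\mathbb C[G]$ is a \emph{unique factorization domain}. This last point is precisely why the hypothesis "simply connected" is needed and is the crux of why $R(G,c)$ comes out factorial rather than merely normal.

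With both factors understood, the coordinate ring of $G\times A^{\mathbb Z}$ is the tensor product
\[
R(G,c)\;\cong\;\mathbb C[G]\otimes_{\mathbb C}\mathbb C[\theta_i^{(s)}\mid 1\le i\le r,\ s\in\mathbb Z]
\;=\;\mathbb C[G][\theta_i^{(s)}\mid 1\le i\le r,\ s\in\mathbb Z],
\]
a polynomial ring in countably many variables with coefficients in $\mathbb C[G]$, which is exactly the asserted description. The final step is to promote the set-theoretic bijection to an isomorphism of schemes, i.e.\ to check that the regular functions one wants on $B(G,c)$ — the matrix coefficients of the individual $g(s)$ and the coordinates $\theta_i(a(s))$ — are expressible as polynomials in $g(0)$ and the $\theta_i^{(s)}$ via the explicit recursion $g(s)g(s+1)^{-1}=a(s)\in A$ displayed above, and conversely. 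The recursion $g(-1)=a(-1)g(0),\ g(1)=a(0)^{-1}g(0),\dots$ shows each $g(s)$ is a word in $g(0)$ and finitely many $a(t)$'s; the only subtlety is that inverses $a(t)^{-1}$ and $g(0)^{-1}$ appear, but these are regular functions on the groups involved (matrix entries of the inverse are polynomials in the entries divided by the determinant, which is a unit), so the transition maps are morphisms in both directions.

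I expect the main obstacle to be the infinite-dimensional bookkeeping: one must justify that $A^{\mathbb Z}$ really is a scheme (a filtered colimit of affine schemes, represented by the polynomial ring in infinitely many variables) and that integrality and the UFD property survive this passage to the limit. Integrality is immediate since any finite collection of elements lives in a finitely generated sub-polynomial-ring that is a domain. The UFD property in infinitely many variables is the delicate assertion and is best handled by the observation that a factorization of an element takes place inside a finitely generated polynomial subring over $\mathbb C[G]$ — which is a UFD by Gauss's lemma, using that $\mathbb C[G]$ itself is a UFD — together with the fact that irreducibility is detected in such a finitely generated subring. Everything else reduces to routine verification that the explicit formulas for $g(s)$ and its inverse are regular in both directions.
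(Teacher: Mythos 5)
Your proposal is correct and follows exactly the route the paper indicates: transport the scheme structure through the bijection $b\mapsto(g(0),a)$ onto $G\times A^{\mathbb Z}$, identify the coordinate ring as $\mathbb C[G]\otimes_{\mathbb C}\mathbb C[\theta_i^{(s)}]$, and deduce integrality and factoriality from the facts that $\mathbb C[G]$ is a UFD for simply connected $G$ and that the UFD property passes to polynomial rings in infinitely many variables; the details you supply (Gauss's lemma in finitely generated subrings, regularity of the transition maps via the recursion $g(s\pm1)=a(\,\cdot\,)^{\mp1}g(s)$) are the right ones. The only nitpick is that $A^{\mathbb Z}$ is the \emph{inverse} limit of the finite sub-products $A^{[-N,N]}$ (its coordinate ring being the filtered \emph{colimit} of theirs), not a colimit of schemes, but this does not affect the argument.
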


\begin{example}\label{exa14}
{\rm
As in Example~\ref{exa1}, let $G=SL(n)$ and let $c=c_{st} = s_1\cdots s_r$ be the standard Coxeter element.

It follows from the description of $A$ given in Equation~(\ref{eq1}) that  
$g(s)g(s+1)^{-1} \in A$ if and only if the first $r$ rows of $g(s+1)$ coincide with
the last $r$ rows of $g(s)$. 
Therefore, we can think of an $(SL(n),c_{st})$-band as an $(\infty\times n)$-array 
\[
\mathrm{B} = \left[b_{ij}\right], \quad (i\in \mathbb{Z},\ 1\le j\le n,\ b_{ij}\in K)
\]
such that every $n\times n$ submatrix
\[
\mathrm{B}(s) = \left[b_{ij}\right], \quad (s\le i\le s+n-1,\ 1\le j\le n)
\]
consisting of $n$ consecutive rows of $\mathrm{B}$ belongs to $SL(n)$. That is, such that 
$
 \det  \mathrm{B}(s) = 1
$
for every $s\in\mathbb{Z}$. Such an array $\mathrm{B}$ corresponds to the $(SL(n),c_{st})$-band $b=(\mathrm{B}(s))_{s\in \mathbb{Z}}$. 
This is the reason for the name \emph{``band''}.

Let 
$\mathbb{C}[X_{ij} \mid  i \in \mathbb{Z}, \ 1\le j \le n]$
be the polynomial ring in the variables $X_{ij}.$ For $s \in \mathbb{Z}$, we set
\[
Y_{s,n} := \det [X_{ij} \mid  s\le i \le s+n-1, \ 1\le j \le n]. 
\]
We can consider the quotient ring
\[
\mathcal{R}_n:=  \mathbb{C}[X_{ij} \mid  i \in \mathbb{Z}, \ 1\le j \le n] / (Y_{s,n}-1 \mid s \in \mathbb{Z}), 
\]
and the associated infinite dimensional affine scheme
\[
\mathcal{B}_n := \mathrm{Spec}(\mathcal{R}_n). 
\]
The previous discussion shows that the set of $(SL(n), c_{st})$-bands can be naturally identified with the set of $\mathbb{C}$-rational points of $\mathcal{B}_n$. \qed
}
\end{example}

\subsection{$G$-action}
\label{subsec-G-act}

The action of $G$ on itself by right translation extends to an action of $G$ on $B(G,c)$:
\[
 (g(s))_{s \in \mathbb{Z}} \cdot h := (g(s)h)_{s\in \mathbb{Z}},\qquad ((g(s))_{s \in \mathbb{Z}} \in B(G,c),\ h\in G).
\]
Indeed $(g(s)h)(g(s+1)h)^{-1} = g(s)g(s+1)^{-1}\in A$.

Under the isomorphism $B(G,c) \simeq G \times A^\mathbb{Z}$, this action reduces to the right action 
of $G$ on the first factor. 
This implies that the morphism from $B(G,c)$ to $A^\mathbb{Z}$ sending $(g(s))_{s \in \mathbb{Z}}$ to 
$(g(s)g(s+1)^{-1})_{s \in \mathbb{Z}}$ factors through an isomorphism from 
the categorical quotient 
\[
B(G,c)\sslash G = \mathrm{Spec}\left(R(G,c)^G\right)
\]
of $B(G,c)$ by this action to $A^\mathbb{Z}$. In other words, we have an isomorphism $R(G,c)^G\simeq \mathbb{C}[A^\mathbb{Z}]$.

The right action of $G$ on $B(G,c)$ induces an interesting left linear action of $G$ on the coordinate ring $R(G,c)$.
We can also restrict the action of $G$ to important subgroups like $U$ or $T$, and consider
the invariant subalgebras $R(G,c)^U$, $R(G,c)^G$ and their weight space decompositions
under the action of $T$.

As mentioned above, $\mathbb{C}[A^\mathbb{Z}]$ can be identified with the complexified Grothendieck ring of the category 
$\mathcal{C}_\mathbb{Z}$. It turns out that $R(G,c)^U$ and $R(G,c)$ are also isomorphic to Grothendieck rings of categories
arising from the representation theory of quantum affine algebras, and this was in fact our initial 
motivation for introducing the scheme $B(G,c)$. 

Note however that a statement like \emph{``the Grothendieck ring of $\mathcal{C}_\mathbb{Z}$ is isomorphic to the coordinate ring of
$A^\mathbb{Z}$''} is not very substantial, since it only amounts to say that both rings are polynomial rings in countably many variables.
To make it meaningful, we need to exhibit a distinguished isomorphism $\mathbb{C}\otimes K_0(\mathcal{C}_\mathbb{Z}) \to \mathbb{C}[A^\mathbb{Z}]$ mapping classes of simple objects of $\mathcal{C}_\mathbb{Z}$ to natural coordinate functions on $A^\mathbb{Z}$. 
To do this we will show that $R(G,c)$ has the additional structure of a cluster algebra, and that $R(G,c)^U$ and $R(G,c)^G$ are cluster subalgebras. Isomorphic cluster structures have already been discovered on the quantum affine algebra side, so by matching them we will obtain the required distinguished isomorphisms.

\section{Cluster structures on $R(G,c)^G$ and $R(G,c)^U$}
\label{sect-invariant-subalg}

\subsection{Coordinate functions}\label{subsect3.4}

To define initial seeds of cluster structures on spaces of bands, we need coordinate systems.
There are two natural families of regular functions on $B(G,c)$, both defined using the generalized minors
introduced in \S\ref{subsec1.1}. 

\begin{Def}
\begin{enumerate}
 \item For $1\le i \le r$, $s\in \mathbb{Z}$, and $v,w\in W$, the function $\Delta^{(s)}_{v(\varpi_i),w(\varpi_i)}$ is
the unique element of $R(G,c)$ such that 
 \[
  \Delta^{(s)}_{v(\varpi_i),w(\varpi_i)}(b) := \Delta_{v(\varpi_i),w(\varpi_i)}(g(s)),
  \qquad
  (b = (g(s))_{s\in \mathbb{Z}} \in B(G,c)).
 \]
 \item
 For $1\le i \le r$, $s\in \mathbb{Z}$, and $k\ge 1$, the function $\theta^{(s)}_{i,k}$ is
 the unique element of $R(G,c)$ such that 
 \[
  \theta^{(s)}_{i,k}(b) := \Delta_{\varpi_i,\varpi_i}(g(s)g(s+k)^{-1}),
  \qquad
  (b = (g(s))_{s\in \mathbb{Z}} \in B(G,c)).
 \]
\end{enumerate}
\end{Def}

For $1\le i \le r$, let $m_i$ denote the smallest integer $k$ such that $c^k(\varpi_i) = w_0(\varpi_i)$,
where $w_0$ is the longest element of $W$.
The defining condition $g(s)g(s+1)^{-1} \in A$ of a band implies the following gluing relations for the 
functions $\Delta^{(s)}_{v(\varpi_i),w(\varpi_i)}$.

\begin{Prop}[\cite{FL}, Proposition 5.11]
    \label{lem: glueing formulas}
    The following formulas hold in the ring $R(G, c)$:
    \begin{equation}
    \label{glue-eq}
 \Delta^{(s)}_{c^k(\varpi_i),\,w(\varpi_i)} = \Delta^{(s+1)}_{c^{k-1}(\varpi_i),\,w(\varpi_i)},
\quad (1\le i\le r,\ 1\le k\le m_i,\ w\in W,\ s\in\mathbb{Z}).
\end{equation}
\end{Prop}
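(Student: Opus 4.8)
The plan is to prove the gluing relation
\[
\Delta^{(s)}_{c^k(\varpi_i),\,w(\varpi_i)} = \Delta^{(s+1)}_{c^{k-1}(\varpi_i),\,w(\varpi_i)}
\]
directly from the definition of the functions $\Delta^{(s)}$, by evaluating both sides on an arbitrary band $b=(g(t))_{t\in\mathbb{Z}}\in B(G,c)$ and reducing to a single identity among generalized minors of group elements. Since both sides are regular functions on the integral scheme $B(G,c)$, it suffices to check that they agree on the dense set of $\mathbb{C}$-rational points, i.e.\ on all bands.

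First I would unwind the definitions. By definition the left-hand side evaluates to $\Delta_{c^k(\varpi_i),\,w(\varpi_i)}(g(s))$ and the right-hand side to $\Delta_{c^{k-1}(\varpi_i),\,w(\varpi_i)}(g(s+1))$. The defining property of a band gives $g(s)=a(s)\,g(s+1)$ with $a(s)\in A=U(c^{-1})\overline{c}$, so I can write $a(s)=u\,\overline{c}$ for some $u\in U$, whence $g(s)=u\,\overline{c}\,g(s+1)$. The key step is then the interplay between generalized minors and left multiplication: the standard properties recalled in \S\ref{subsec1.1} (in the style of \cite[Theorem 1.17]{FZ}, as already used in the proof of Proposition~\ref{prop6}) give that a minor $\Delta_{x,y}$ is left-invariant under the unipotent part in the sense that $\Delta_{x(\varpi_i),y(\varpi_i)}(u\,h)=\Delta_{x(\varpi_i),y(\varpi_i)}(h)$ when $u\in U$ and $x(\varpi_i)$ lies in the appropriate ``extremal'' orbit, and that left multiplication by a representative $\overline{c}$ of the Coxeter element shifts the left index by $c$, namely $\Delta_{c^{k-1}(\varpi_i),\,y}(h)=\Delta_{c^k(\varpi_i),\,y}(\overline{c}\,h)$. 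Combining these two facts with $g(s)=u\,\overline{c}\,g(s+1)$ should turn $\Delta_{c^k(\varpi_i),\,w(\varpi_i)}(g(s))$ into $\Delta_{c^{k-1}(\varpi_i),\,w(\varpi_i)}(g(s+1))$, which is exactly the claim.

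The main obstacle is making precise exactly which invariance statement holds and why the hypothesis $1\le k\le m_i$ is the right range. The subtlety is that $\Delta_{c^k(\varpi_i),\,w(\varpi_i)}$ genuinely equals $\Delta_{\overline{c}^{\,k}\cdot\varpi_i,\,w(\varpi_i)}$ as a matrix coefficient only when $c^k(\varpi_i)$ remains a legitimate extremal weight in the orbit of $\varpi_i$ under the action relevant to the minor, i.e.\ when the weight $c^k(\varpi_i)$ still lies on the Weyl-orbit segment between $\varpi_i$ and $w_0(\varpi_i)$; this is precisely guaranteed by $k\le m_i$, where $m_i$ was defined as the smallest $k$ with $c^k(\varpi_i)=w_0(\varpi_i)$. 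For $k$ in this range the extremal weight vector of weight $c^{k-1}(\varpi_i)$ is annihilated by the appropriate root subgroups so that the left-$U$-invariance applies cleanly, and the $\overline{c}$-covariance of extremal minors (again from the multiplicativity properties of \cite{FZ}) produces the index shift $c^{k-1}\mapsto c^k$ without spurious correction terms. I would therefore carry out the argument in two moves: (i) verify $\Delta_{c^k(\varpi_i),\,w(\varpi_i)}(\overline{c}\,h)=\Delta_{c^{k-1}(\varpi_i),\,w(\varpi_i)}(h)$ for $k$ in the allowed range, using the normalization of $\overline{c}$ as a product of the $\overline{s_{i_j}}$'s; and (ii) verify the left-$U$-invariance $\Delta_{c^k(\varpi_i),\,w(\varpi_i)}(u\,h)=\Delta_{c^k(\varpi_i),\,w(\varpi_i)}(h)$ for $u\in U$, which holds because the highest weight vector of weight $c^k(\varpi_i)\ne\varpi_i$ (for $k\ge 1$) is fixed by $U$ up to lower-order terms that do not contribute to this matrix coefficient. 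Chaining (i) and (ii) through $g(s)=u\,\overline{c}\,g(s+1)$ yields the identity on rational points, and by density it holds in $R(G,c)$, completing the proof.
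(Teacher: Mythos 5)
The paper does not actually prove this proposition (it is quoted from \cite{FL}, Proposition 5.11), so the only internal point of comparison is the proof of Proposition~\ref{prop6}, which uses exactly the toolkit you invoke: write $a\in A=(U\overline{c})\cap(\overline{c}U^-)$, use the left-$U^-$/right-$U$ invariance of $\Delta_{\varpi_i,\varpi_i}$, and the identity $\Delta_{u(\varpi_i),v(\varpi_i)}(x)=\Delta_{\varpi_i,\varpi_i}(\overline{u}^{-1}x\overline{v})$. Your overall skeleton (evaluate on a band, factor $g(s)=a(s)g(s+1)$, absorb one $\overline{c}$ to shift $c^k\mapsto c^{k-1}$, discard the unipotent factor) is the right one.

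The gap is in your step (ii). The asserted left-$U$-invariance $\Delta_{c^k(\varpi_i),w(\varpi_i)}(uh)=\Delta_{c^k(\varpi_i),w(\varpi_i)}(h)$ for all $u\in U$ is false: already for $G=SL(3)$ one has $\Delta_{s_1(\varpi_1),\varpi_1}(x)=x_{21}$, and left multiplication by $u=x_{\alpha_2}(t)\in U$ turns this into $x_{21}+tx_{31}$. A minor $\Delta_{\gamma,\delta}$ is left-invariant under all of $U$ only when $\gamma=w_0(\varpi_i)$, and your justification (the extremal weight vector being ``fixed by $U$ up to lower-order terms that do not contribute'') is not correct. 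What must save the argument is that the unipotent factor of $a(s)$ lies in $U(c^{-1})$, not in all of $U$, and one has to show that $\overline{c}^{\,-k}u\,\overline{c}^{\,k}$ does not affect the $\varpi_i$-row of the action on $V(\varpi_i)$. This is not automatic: this conjugate need not lie in $U^-$ once $k\ge 2$ (in type $A_2$ with $c=s_1s_2$ one has $\alpha_1\in\Phi^+\cap c\Phi^-$ but $c^{-2}\alpha_1=\alpha_2>0$), so one needs the genuinely root-theoretic input that $\langle c^k(\varpi_i),\beta^\vee\rangle\le 0$ for every $\beta\in\Phi^+\cap c\Phi^-$ and every $0\le k\le m_i$ --- i.e.\ the positive roots surviving the conjugation are orthogonal to $\varpi_i$. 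This is precisely where the $i$-dependent bound $k\le m_i$ enters, and it is the step your proposal does not supply. Your stated reason for the bound (that $c^k(\varpi_i)$ must remain a ``legitimate extremal weight'') is also off target: $c^k(\varpi_i)$ is an extremal weight and $\Delta_{c^k(\varpi_i),w(\varpi_i)}$ is perfectly well defined for every $k$; what fails for $k>m_i$ is the invariance, and with it the identity itself --- for $G=SL(2)$ and $k=2$ the formula would assert that the first row of $g(s)$ equals the second row of $g(s+1)$, which is false for a generic band.
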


It is easy to see that the functions $\theta^{(s)}_{i,k}$ are $G$-invariant, hence can be regarded as functions on~$A^\mathbb{Z}$.
In particular, for a fixed $s\in\mathbb{Z}$ the functions $\theta^{(s)}_i := \theta^{(s)}_{i,1}\ (1\le i \le r)$ are the polynomial generators
of the coordinate ring of the $s$th copy $A^{(s)}$ of $A$ inside the product $A^\mathbb{Z}$.
Hence we have 
\[
 R(G,c)^G = \mathbb{C}[A^\mathbb{Z}] = \mathbb{C}[\theta^{(s)}_i \mid 1\le i\le r,\ s\in \mathbb{Z}].
\]
The functions $\theta^{(s)}_{i,k}$ are solutions of the following system of functional relations (compare with Proposition~\ref{prop6}).

\begin{Prop}[\cite{FL}, Proposition 7.1]
\label{Prop.7.1}
The functions $\theta^{(s)}_{i,\,k}$ satisfy 
\[
\theta^{(s)}_{i,\,k}\theta^{(s+1)}_{i,\,k} =
\theta^{(s)}_{i,\,k+1}\theta^{(s+1)}_{i,k-1} + \prod_{j:\ c_{ij}=-1} \theta^{(s+a_{ij})}_{j,k},
\qquad
(i\in I,\ s\in \mathbb{Z},\ k\ge 1).
\]
where the integers $a_{ij}$ are as in the proof of Proposition~\ref{prop6}.
\end{Prop}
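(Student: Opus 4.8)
\section*{Proof proposal for Proposition~\ref{Prop.7.1}}

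The plan is to deduce the identity from a single pointwise computation on bands, mirroring the proof of Proposition~\ref{prop6} but keeping track of the band index. Since $R(G,c)$ is an integral domain, it suffices to verify the asserted equality of regular functions at an arbitrary rational point $b=(g(s))_{s\in\mathbb{Z}}\in B(G,c)$. I would write $a(s):=g(s)g(s+1)^{-1}\in A$ and $G(s,t):=g(s)g(t)^{-1}$, so that $\theta^{(s)}_{i,k}(b)=\Delta_{\varpi_i,\varpi_i}(G(s,s+k))$ and $G(s,t)=a(s)a(s+1)\cdots a(t-1)$. From $A=(U\overline{c})\cap(\overline{c}U^-)$ one gets, for every $s$, factorizations $a(s)=u_s\overline{c}=\overline{c}\,u_s'$ with $u_s\in U$ and $u_s'\in U^-$; in particular $\overline{c}^{-1}g(s)=u_s'\,g(s+1)$, the relation that will advance the band index. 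In the constant band $a(s)\equiv a$ one has $\theta^{(s)}_{i,k}=\Delta_{\varpi_i,\varpi_i}(a^k)$ and the statement degenerates to Proposition~\ref{prop6}.

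First I would introduce the auxiliary element $Y:=G(s,s+k+1)\,\overline{c}^{-1}=g(s)g(s+k+1)^{-1}\overline{c}^{-1}$, which plays exactly the role of $a^{k+1}\overline{c}^{-1}$ in the proof of Proposition~\ref{prop6}. Then I would evaluate at $Y$ the four generalized minors entering the Fomin--Zelevinsky identity~(\ref{eq3n}), using only that right multiplication by $U$ and left multiplication by $U^-$ preserve $\Delta_{\varpi_i,\varpi_i}$, together with $\Delta_{v(\varpi_i),w(\varpi_i)}(x)=\Delta_{\varpi_i,\varpi_i}(\overline{v}^{-1}x\,\overline{w})$. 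Using $Y\overline{c}=G(s,s+k+1)$ and $G(s,s+k+1)\overline{c}^{-1}=G(s,s+k)\,u_{s+k}$ on the right, and $\overline{c}^{-1}G(s,s+k+1)=u_s'\,G(s+1,s+k+1)$ on the left, I expect to obtain
\[
\Delta_{\varpi_i,\varpi_i}(Y)=\theta^{(s)}_{i,k},\qquad
\Delta_{\varpi_i,c(\varpi_i)}(Y)=\theta^{(s)}_{i,k+1},
\]
\[
\Delta_{c(\varpi_i),c(\varpi_i)}(Y)=\theta^{(s+1)}_{i,k},\qquad
\Delta_{c(\varpi_i),\varpi_i}(Y)=\theta^{(s+1)}_{i,k-1}.
\]
The mechanism is transparent: manipulations on the right of $Y$ (multiplication by $\overline{c}$ or by $u_{s+k}\in U$) keep the band index equal to $s$ and only change $k$, whereas left multiplication by $\overline{c}^{-1}$ turns $\overline{c}^{-1}g(s)$ into $u_s'\,g(s+1)$ and thus advances the index from $s$ to $s+1$. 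This left/right asymmetry is precisely what produces the shifted superscripts of the $Q$-system.

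Substituting these four values into~(\ref{eq3n}) then yields $\theta^{(s+1)}_{i,k}\theta^{(s)}_{i,k}=\theta^{(s+1)}_{i,k-1}\theta^{(s)}_{i,k+1}$ plus the product term, which is the desired relation after reordering the commuting factors. For the product term I would evaluate $\Delta_{\varpi_j,\varpi_j}(Y)=\theta^{(s)}_{j,k}$ for the indices with $a_{ij}=0$ and $\Delta_{c(\varpi_j),c(\varpi_j)}(Y)=\theta^{(s+1)}_{j,k}$ for those with $a_{ij}=1$, so that the two partial products merge into $\prod_{j:\,c_{ij}=-1}\theta^{(s+a_{ij})}_{j,k}$, since the superscripts $s$ and $s+1$ are exactly $s+a_{ij}$. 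The hard part is not the algebra but the index bookkeeping: one must check carefully that each left translation by $\overline{c}^{-1}$ shifts the band by one step, and that the split of~(\ref{eq3n}) according to $a_{ij}\in\{0,1\}$ matches the superscript shift $s+a_{ij}$. Everything else is a verbatim transcription of the four minor computations in the proof of Proposition~\ref{prop6}, now carried out with the band elements $G(s,t)$ in place of the powers $a^k$.
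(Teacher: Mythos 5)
Your proposal is correct and follows essentially the same route as the paper: the paper's proof (deferring the details to \cite{FL}) consists precisely in evaluating the Fomin--Zelevinsky identity~(\ref{eq3n}) at the element $a(s)\cdots a(s+k)\,\overline{c}^{-1}$, and your four minor computations, including the observation that left multiplication by $\overline{c}^{-1}$ shifts the band index while right multiplication by $u_{s+k}\in U$ lowers $k$, reproduce exactly the non-degenerate version of the computation given for Proposition~\ref{prop6}. The index bookkeeping you flag as the delicate point checks out, including the matching of the split over $a_{ij}\in\{0,1\}$ with the superscripts $s+a_{ij}$.
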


\begin{proof} The proof follows again from the generalized minor identity (\ref{eq3n}), see \cite{FL}. 
 \end{proof}

\subsection{Cluster structure on $R(G,c)^G$}

Proposition~\ref{Prop.7.1} will allow us to endow $R(G,c)^G$ with the structure of a cluster algebra.

To do this, we first introduce the labelled infinite quiver $\Theta$.
The vertex set of $\Theta$ is $[1,r]\times \mathbb{Z}_{>0}$. 
There is an arrow between two vertices $(i,r)$ and $(j,s)$ if and only if one of the two following
conditions is satisfied:
\begin{itemize}
 \item[(i)] $i = j$ and $|r-s| = 1$, or
 \item[(ii)] $r=s$ and $c_{ij} = -1$.
\end{itemize}
The orientation of these arrows is fixed by the following rules: 
\begin{itemize}
 \item[(iii)] the vertical subquivers $\Theta_i$ with vertex set $\{(i,s) \mid s>0\}$ are in sink-source orientation,
 \item[(iv)] the horizontal subquivers $\Theta^{(s)}$ with vertex set $\{(i,s)\mid i\in I\}$ are in sink-source orientation,
 \item[(v)] if $c_{ij}=-1$ the square supported on vertices $(i,s)$, $(i,s+1)$, $(j,s)$, $(j,s+1)$, is an oriented $4$-cycle:
 \[
\mbox{either}\qquad 
  \begin{array}{ccc}
   (i,s)&\rightarrow &(j,s)\\
   \uparrow&&\downarrow\\
   (i,s+1)&\leftarrow &(j,s+1)
  \end{array}
\qquad\mbox{or}\qquad
  \begin{array}{ccc}
   (i,s)&\leftarrow &(j,s)\\
   \downarrow&&\uparrow\\
   (i,s+1)&\rightarrow &(j,s+1)
  \end{array}
  \]
\end{itemize}

To be precise, there are exactly two quivers satisfying rules (i) to (v) above, and $\Theta$ denotes a fixed choice of one of them.

To each vertex $(i,s)\in \Theta$ we attach the function $\theta_{i,s}^{(n(i,s))}$, where the integer $n(i,s)$
is determined by an explicit rule depending on $c$, see \cite[\S 7.2]{FL}.

\begin{figure}[t]
\[
\xymatrix@-1.0pc{
 &&&\ar[ld]\theta^{(1)}_{4,1}
\\
\theta^{(0)}_{1,1}\ar[dd]& \ar[l]\theta^{(1)}_{2,1}\ar[r] &\theta^{(0)}_{3,1}\ar[dd]
\\
&&&\theta^{(0)}_{4,2}\ar[uu]\ar[dd]&\theta^{(0)}_{5,1}\ar[llu]
\\
\theta^{(0)}_{1,2}\ar[r]& \theta^{(0)}_{2,2}\ar[uu]\ar[dd] &\ar[l]\theta^{(0)}_{3,2}\ar[ur]\ar[rrd]
\\
&&&\ar[ld]\theta^{(0)}_{4,3}&\theta^{(-1)}_{5,2}\ar[uu]\ar[dd]
\\
\theta^{(-1)}_{1,3}\ar[uu]& \ar[l]\theta^{(0)}_{2,3}\ar[r] &\ar[uu]\theta^{(-1)}_{3,3}&\vdots
\\
\vdots&\vdots&\vdots&&\theta^{(-1)}_{5,3}\ar[llu]
\\
&&&&\vdots
}
\]
\caption{\label{Fig9} {\it The first 3 layers of the initial seed $\Theta$ in type $D_5$ for $c=s_2s_4s_1s_3s_5$.}}
\end{figure}

\begin{example}
{\rm
Let $G$ be of type $D_5$ and $c = s_2s_4s_1s_3s_5$.
The corresponding labelled quiver $\Theta$ is displayed in Figure~\ref{Fig9}.
\qed
}
\end{example}

\begin{Thm}[\cite{FL}]\label{Thm.7.3}
The ring $R(G,c)^G$ of $G$-invariant functions on $B(G,c)$ has the structure of a cluster algebra with initial seed given by $\Theta$. 
\end{Thm}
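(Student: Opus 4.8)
The plan is to realize $R(G,c)^G$ as the cluster algebra $\mathcal{A}(\Theta)$ attached to the seed $\Theta$ inside the fraction field $\mathcal{F} := \mathrm{Frac}(R(G,c)^G)$, by tracking cluster variables explicitly and matching each quiver mutation with a relation from Proposition~\ref{Prop.7.1}. Recall that $R(G,c)^G = \mathbb{C}[\theta^{(s)}_i \mid 1\le i\le r,\ s\in\mathbb{Z}]$ is a polynomial ring, so $\mathcal{F}$ has the $\theta^{(s)}_i = \theta^{(s)}_{i,1}$ as a transcendence basis. First I would check that the initial cluster, the family $\{\theta^{(n(i,k))}_{i,k} \mid 1\le i\le r,\ k\ge 1\}$ attached to the vertices of $\Theta$, is again a transcendence basis of $\mathcal{F}$. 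This is a birational change of coordinates: solving the relation of Proposition~\ref{Prop.7.1} for the top variable gives $\theta^{(s)}_{i,k+1}=\bigl(\theta^{(s)}_{i,k}\theta^{(s+1)}_{i,k}-\prod_{j}\theta^{(s+a_{ij})}_{j,k}\bigr)/\theta^{(s+1)}_{i,k-1}$ (with $\theta^{(s)}_{i,0}=1$), which expresses every $\theta^{(s)}_{i,k}$ recursively as a rational function of the $\theta^{(\cdot)}_{\cdot,1}$; algebraic independence of the initial cluster then follows from the invertibility of this recursion together with the shift rule $n(i,k)$.

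The core of the argument is the verification that a single mutation of $\Theta$ at a vertex $(i,k)$ reproduces exactly the functional relation of Proposition~\ref{Prop.7.1}. The arrows incident to $(i,k)$ are the two vertical arrows joining it to $(i,k-1)$ and $(i,k+1)$, and the horizontal arrows joining it to the neighbours $(j,k)$ with $c_{ij}=-1$. The vertical sink-source rule (iii) forces the two vertical arrows to point the same way, so that their endpoints contribute the single monomial $\theta^{(s)}_{i,k+1}\theta^{(s+1)}_{i,k-1}$ to one side of the exchange relation, while the horizontal sink-source rule (iv) together with the oriented $4$-cycle rule (v) force the horizontal arrows to point the opposite way, contributing $\prod_{j:\,c_{ij}=-1}\theta^{(s+a_{ij})}_{j,k}$ to the other side, with the integers $a_{ij}$ as in the proof of Proposition~\ref{prop6}. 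The explicit shift rule $n(i,k)$ of \cite[\S7.2]{FL} must be arranged so that, in the initial seed and after each mutation, these neighbouring functions carry precisely the shifts $s,\ s+1,\ s+a_{ij}$ demanded by Proposition~\ref{Prop.7.1}; granting this, mutating the vertex carrying $\theta^{(s)}_{i,k}$ replaces it by $\theta^{(s+1)}_{i,k}$ (or $\theta^{(s-1)}_{i,k}$), the exchange relation being a verbatim instance of Proposition~\ref{Prop.7.1}.

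Propagating these mutations, an induction then shows that the full set of cluster variables of $\mathcal{A}(\Theta)$ is exactly $\{\theta^{(s)}_{i,k}\mid 1\le i\le r,\ s\in\mathbb{Z},\ k\ge 1\}$: at each level $k$ the mutations sweep the shift $s$ over all of $\mathbb{Z}$, so in particular every polynomial generator $\theta^{(s)}_i=\theta^{(s)}_{i,1}$ is reached. Since each such cluster variable is a genuine regular function on $A^\mathbb{Z}$, hence lies in $R(G,c)^G$, we obtain $\mathcal{A}(\Theta)\subseteq R(G,c)^G$; and since the $\theta^{(s)}_{i,1}$ generate $R(G,c)^G$ and all occur as cluster variables, the reverse inclusion holds, whence $R(G,c)^G=\mathcal{A}(\Theta)$. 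I expect the main obstacle to be the combinatorial bookkeeping of the core step: pinning down the shift rule $n(i,k)$ and checking, for every vertex and both vertical orientations, that one quiver mutation is literally the relation of Proposition~\ref{Prop.7.1} at the correct shifts, so that the staircase of $\theta$-functions propagates consistently with no spurious cluster variables appearing and the initial cluster is genuinely a transcendence basis.
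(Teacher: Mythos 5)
The paper does not reprove this theorem (it defers to \cite{FL}), but it does indicate the proof strategy: the first-step mutations of $\Theta$ are instances of Proposition~\ref{Prop.7.1}, and the theorem is then \emph{an application of the Starfish lemma of \cite{FWZ}}. Measured against that, your proposal gets the easy half right but has a genuine gap in the other half. The part where you match each initial exchange relation with Proposition~\ref{Prop.7.1}, and iterate such mutations to show that every $\theta^{(s)}_{i,k}$ (in particular every polynomial generator $\theta^{(s)}_{i,1}$ of $R(G,c)^G$) is a cluster variable, is sound in spirit and yields $R(G,c)^G\subseteq\mathcal{A}(\Theta)$, modulo the bookkeeping you acknowledge (after mutating at $(i,k)$ the neighbouring labels change, so one must order the mutations so that each step is again an instance of Proposition~\ref{Prop.7.1}).

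The gap is your claim that the full set of cluster variables of $\mathcal{A}(\Theta)$ is exactly $\{\theta^{(s)}_{i,k}\}$. This is false: cluster variables are produced by \emph{arbitrary} mutation sequences, not only by the particular staircase sequences realizing the $T$-system recursion, and already after two mutations at adjacent vertices $(i,k)$, $(j,k)$ with $c_{ij}=-1$ the second exchange is no longer an instance of Proposition~\ref{Prop.7.1} and produces a new rational function. (Under the isomorphism $\iota$ of Proposition~\ref{Prop.8.1} this is reflected in the fact that the cluster variables of $K_0(\mathcal{C}_\mathbb{Z})$ are classes of many prime simple modules besides the Kirillov--Reshetikhin modules.) Consequently your argument for the inclusion $\mathcal{A}(\Theta)\subseteq R(G,c)^G$ --- which is the hard direction, i.e.\ that every cluster variable is a genuine \emph{polynomial} in the $\theta^{(s)}_i$ rather than merely a Laurent polynomial in the initial cluster --- collapses. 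This is exactly what the Starfish lemma is invoked for in \cite{FL}: one uses that $R(G,c)^G$ is a unique factorization domain (a polynomial ring), that the initial cluster variables and their one-step mutation partners $\theta^{(s\pm1)}_{i,k}$ all lie in $R(G,c)^G$ and are pairwise coprime, and deduces the required containment (with an exhaustion by finite subseeds, since the ring is not finitely generated). Without this, or some substitute controlling all cluster variables at once, the equality $R(G,c)^G=\mathcal{A}(\Theta)$ is not established.
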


Note that every first step mutation of $\Theta$ is an instance of the functional relations of Proposition~\ref{Prop.7.1}.
This property plays an important role in the proof of Theorem~\ref{Thm.7.3}, which is an application of the Starfish lemma of \cite{FWZ}. Note also that every function $\theta^{(s)}_{i,k}$ is
a cluster variable.

\subsection{$R(G,c)^G$ and the category $\mathcal{C}_\mathbb{Z}$}
\label{sec:5.3}

In order to relate $R(G,c)^G$ with $K_0(\mathcal{C}_\mathbb{Z})$, let us recall some basic results. 
We refer the reader to the survey paper \cite{HLsurvey} for any undefined terminology or notation.
For $1\le i \le r$, let us define integers $\xi_i$ by the following inductive rule:
if there exists $j$ such that $c_{ij} = -1$ and if $s_j$ precedes $s_i$ in a reduced expression of $c$, then 
$\xi_i = \xi_j - 1$, otherwise $\xi_i = 0$. 

It is known that $K_0(\mathcal{C}_\mathbb{Z})$ is the polynomial ring in the classes of the fundamental
$U_q(\widehat{\mathfrak{g}})$-modules: 
\[
L(Y_{i,q^{2s+1-\xi_i}}),\qquad (1\le i \le r,\ s\in \mathbb{Z}).
\]
The fundamental modules belong to the larger family of Kirillov-Reshetikhin modules:
\[
 W^{(i)}_{k,q^{2s+1-\xi_i}} := L(Y_{i,q^{2s+1-\xi_i}}Y_{i,q^{2s+3-\xi_i}}\cdots 
 Y_{i,q^{2s+2k-1-\xi_i}}), 
 \quad (1\le i \le r,\ s\in \mathbb{Z},\ k>0).
\]
It was first conjectured by Kuniba, Nakanishi and Suzuki \cite{KNS1}, and then proved by Nakajima \cite{N}
that the classes $[W^{(i)}_{k,q^p}]\in K_0(\mathcal{C}_\mathbb{Z})$  satisfy the so-called $T$-system of equations:
\[
 [W^{(i)}_{k,q^p}][W^{(i)}_{k,q^{p+2}}] = [W^{(i)}_{k+1,q^p}][W^{(i)}_{k-1,q^{p+2}}]
 \ + \prod_{j:\ c_{ij}=-1} [W^{(j)}_{k,q^{p+1}}].
\]

The following result then follows from Proposition~\ref{Prop.7.1}:

\begin{Prop}\label{Prop.8.1}
Let $\iota : R(G,c)^G \to 
\mathbb{C}\otimes K_0(\mathcal{C}_\mathbb{Z})$ 
be the $\mathbb{C}$-algebra isomorphism defi\-ned by 
\[
 \iota\left(\theta^{(s)}_{i,1}\right) = \left[L(Y_{i,\,q^{2s+1-\xi_i}})\right],\qquad (i\in I,\ s\in \mathbb{Z}). 
\]
Then for every $i\in I$, $s\in \mathbb{Z}$ and $k>0$ we have
\begin{equation}\label{Eq.34}
 \iota\left(\theta^{(s)}_{i,k}\right) = \left[W^{(i)}_{k,\,q^{2s+1-\xi_i}}\right].
\end{equation}
\end{Prop}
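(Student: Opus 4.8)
The plan is to prove the identity (\ref{Eq.34}) by induction on $k$, reconciling the functional relations of Proposition~\ref{Prop.7.1} satisfied by the functions $\theta^{(s)}_{i,k}$ with the $T$-system satisfied by the classes $[W^{(i)}_{k,q^p}]$. First I recall why $\iota$ is an isomorphism of $\mathbb{C}$-algebras: both $R(G,c)^G$ and $\mathbb{C}\otimes K_0(\mathcal{C}_\mathbb{Z})$ are polynomial rings, the former on the generators $\theta^{(s)}_{i,1}$ and the latter on the classes $[L(Y_{i,q^{2s+1-\xi_i}})]$, and $\iota$ carries one generating set bijectively onto the other. In particular the target is an integral domain, a fact I will use to cancel a common factor at the end.

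For the base of the induction, note that $\theta^{(s)}_{i,0}=1$ by convention, while $W^{(i)}_{0,q^p}$ is the trivial module, so $[W^{(i)}_{0,q^p}]=1$; thus (\ref{Eq.34}) holds for $k=0$. For $k=1$ the fundamental Kirillov--Reshetikhin module is $W^{(i)}_{1,q^p}=L(Y_{i,q^p})$, so that (\ref{Eq.34}) at $k=1$ is exactly the defining equation of $\iota$.

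Now I fix $i\in I$, $s\in\mathbb{Z}$ and $k\ge 1$, write $p:=2s+1-\xi_i$, and assume (\ref{Eq.34}) at all levels $\le k$. Applying the algebra homomorphism $\iota$ to the relation of Proposition~\ref{Prop.7.1}, and inserting the induction hypothesis for every term except the unknown $\theta^{(s)}_{i,k+1}$, I obtain
\[
[W^{(i)}_{k,q^p}]\,[W^{(i)}_{k,q^{p+2}}]
= \iota\!\left(\theta^{(s)}_{i,k+1}\right)\,[W^{(i)}_{k-1,q^{p+2}}]
+ \prod_{j:\ c_{ij}=-1} [W^{(j)}_{k,\,q^{2(s+a_{ij})+1-\xi_j}}],
\]
where I used $\iota(\theta^{(s)}_{i,k})=[W^{(i)}_{k,q^{p}}]$, $\iota(\theta^{(s+1)}_{i,k})=[W^{(i)}_{k,q^{p+2}}]$ and $\iota(\theta^{(s+1)}_{i,k-1})=[W^{(i)}_{k-1,q^{p+2}}]$. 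The decisive step is to check that every factor in the product has spectral parameter $q^{p+1}$. This amounts to the identity $\xi_i-\xi_j = 1-2a_{ij}$ for all $i,j$ with $c_{ij}=-1$, which follows from the definitions of the height function $\xi$ and of the integers $a_{ij}$: since $\xi_j=\xi_i-1+2a_{ij}$, one has $2(s+a_{ij})+1-\xi_j = 2s+2-\xi_i = p+1$. Hence the right-hand side above is precisely the right-hand side of the $T$-system at $(i,k,q^p)$, and subtracting the two relations yields
\[
\left(\iota\!\left(\theta^{(s)}_{i,k+1}\right) - [W^{(i)}_{k+1,q^p}]\right)\,[W^{(i)}_{k-1,q^{p+2}}] = 0.
\]
As the target ring is an integral domain and $[W^{(i)}_{k-1,q^{p+2}}]\ne 0$, I cancel this factor to get $\iota(\theta^{(s)}_{i,k+1}) = [W^{(i)}_{k+1,q^p}]$, which is (\ref{Eq.34}) at level $k+1$, closing the induction.

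I expect the main obstacle to be the spectral-parameter bookkeeping, and specifically the verification $\xi_i-\xi_j = 1-2a_{ij}$, which reconciles the two a priori independent combinatorial inputs: the height function $\xi$ governing the parametrization of $\mathcal{C}_\mathbb{Z}$, and the integers $a_{ij}$ recording the relative order of $s_i$ and $s_j$ in $c$. Once this is secured, the remainder is the formal observation that the families $\bigl(\iota(\theta^{(s)}_{i,k})\bigr)$ and $\bigl([W^{(i)}_{k,q^{2s+1-\xi_i}}]\bigr)$ obey the same recursion with the same initial data inside an integral domain.
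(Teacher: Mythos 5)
Your proof is correct and follows exactly the route the paper intends: the paper derives Proposition~\ref{Prop.8.1} from Proposition~\ref{Prop.7.1} together with Nakajima's $T$-system, which is precisely your induction on $k$ comparing the two recursions in the polynomial ring $\mathbb{C}\otimes K_0(\mathcal{C}_\mathbb{Z})$. Your explicit verification of the spectral-parameter identity $\xi_j=\xi_i-1+2a_{ij}$ (so that $2(s+a_{ij})+1-\xi_j=p+1$) is the detail the paper leaves implicit, and it checks out against the definitions of $\xi_i$ and $a_{ij}$.
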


The image under $\iota$ of the cluster algebra structure on $R(G,c)^G$ coincides with 
the cluster algebra structure on $K_0(\mathcal{C}_\mathbb{Z})$ introduced in \cite{HL1}.
It was conjectured in \cite{HL1} and proved in \cite{Q} and \cite{KKOP2} that all cluster monomials
are classes of simple objects of $\mathcal{C}_\mathbb{Z}$.

\subsection{Cluster structure on $R(G,c)^U$}\label{subsec3.7}

By definition of the generalized minors, for $1\le i\le r$ and $w\in W$ we have
\[
\Delta_{w(\varpi_i),\varpi_i}(gu) = \Delta_{w(\varpi_i),\varpi_i}(g),\qquad (g\in G,\ u\in U). 
\]
It then follows from the definition of the $G$-action on $B(G,c)$ that 
for every $s\in \mathbb{Z}$, the function $\Delta^{(s)}_{w(\varpi_i),\varpi_i}$ is $U$-invariant in $R(G,c)$.
In particular all functions 
\[
\Delta^{(s)}_{\varpi_i,\varpi_i}, \qquad (1\le i\le r,\ s\in \mathbb{Z})
\]
belong to the subalgebra $R(G,c)^U$.

We now introduce the doubly-infinite labelled quiver $\Xi$. Its vertices are labelled by the functions
$\Delta^{(s)}_{\varpi_i,\varpi_i}\ (1\le i\le r,\ s\in \mathbb{Z})$.
There is an arrow $\Delta^{(s)}_{\varpi_i,\varpi_i} \to \Delta^{(t)}_{\varpi_j,\varpi_j}$ in $\Xi$ if and only if
\[
c_{ij} \not = 0\quad \mbox{and} \quad 2t - \xi_j = 2s - \xi_i + c_{ij}. 
\]

\begin{figure}[t]
\[
\def\objectstyle{\scriptstyle}
\def\lablestyle{\scriptstyle}
\xymatrix@-1.0pc{
&{}\save[]+<0cm,1.5ex>*{\vdots}\restore&{}\save[]+<0cm,1.5ex>*{\vdots}\restore  
&{}\save[]+<0cm,1.5ex>*{\vdots}\restore
\\
&\Delta^{(2)}_{\varpi_1,\varpi_1}\ar[rd]\ar[u]&
&\ar[ld] \Delta^{(2)}_{\varpi_3,\varpi_3} \ar[u]
\\
&&\ar[ld] \Delta^{(1)}_{\varpi_2,\varpi_2} \ar[rd]\ar[uu]&&
\\
&\ar[uu]\Delta^{(1)}_{\varpi_1,\varpi_1}\ar[rd]&
&\ar[ld] \Delta^{(1)}_{\varpi_3,\varpi_3} \ar[uu]
\\
&&\ar[uu]\ar[ld] \Delta^{(0)}_{\varpi_2,\varpi_2} \ar[rd]&&
\\
&\ar[uu]\Delta^{(0)}_{\varpi_1,\varpi_1} \ar[rd] &&\ar[ld] \Delta^{(0)}_{\varpi_3,\varpi_3}\ar[uu]
\\
&&\ar[ld] \ar[uu] \Delta^{(-1)}_{\varpi_2,\varpi_2} \ar[rd]&&
\\
&\ar[uu]\Delta^{(-1)}_{\varpi_1,\varpi_1} \ar[rd] &&\ar[ld] \Delta^{(-1)}_{\varpi_3,\varpi_3}\ar[uu]
\\
&&\ar[ld] \ar[uu]\Delta^{(-2)}_{\varpi_2,\varpi_2} \ar[rd]&&
\\
&\ar[uu]\Delta^{(-2)}_{\varpi_1,\varpi_1} &&\ar[uu] \Delta^{(-2)}_{\varpi_3,\varpi_3} 
\\
&{}\save[]+<0cm,0ex>*{\vdots}\ar[u]\restore&{}\save[]+<0cm,0ex>*{\vdots}\ar[uu]\restore  
&{}\save[]+<0cm,0ex>*{\vdots}\ar[u]\restore
\\
}
\]
\caption{\label{Fig5} {\it The labelled quiver $\Xi$ in type $A_3$ with $c=s_1s_3s_2$.}}
\end{figure}

\begin{example}
{\rm
Let $G$ be of type $A_3$ and $c = s_1s_3s_2$.
The corresponding labelled quiver $\Xi$ is displayed in Figure~\ref{Fig5}. \qed
}
\end{example}

\begin{Thm}[\cite{FL}]
The ring $R(G,c)^U$ of $U$-invariant functions on $B(G,c)$ has the structure of an upper cluster algebra with initial seed given by $\Xi$. 
\end{Thm}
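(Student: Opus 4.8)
The plan is to apply the Starfish Lemma of \cite{FWZ}, exactly as in the proof of Theorem~\ref{Thm.7.3}, but now with ambient ring $R(G,c)^U$ and initial seed $\Xi$. The first step is to pin down this ambient ring. Using the isomorphism $B(G,c)\simeq G\times A^\mathbb{Z}$ of \S\ref{subsec-G-act}, under which $U$ acts only on the first factor by right translation and trivially on $A^\mathbb{Z}$, one obtains $R(G,c)^U\simeq \mathbb{C}[G]^U\otimes\mathbb{C}[A^\mathbb{Z}]$. Since $G$ is simply connected, $\mathbb{C}[G]^U=\mathbb{C}[G/U]$ is a unique factorization domain, and a polynomial extension of a UFD (in any number of variables) is again a UFD; hence $R(G,c)^U$ is a UFD. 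This secures the normality hypothesis required by the Starfish Lemma.

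Next I would fix the combinatorics of the seed. The extended cluster consists of the $U$-invariant minors $\Delta^{(s)}_{\varpi_i,\varpi_i}$ attached to the vertices of $\Xi$, and one must check they are algebraically independent and that their field of fractions is $\mathrm{Frac}\,R(G,c)^U$. For a single copy this is the statement $\mathbb{C}[L^{c,e}]=\mathbb{C}[\Delta_{\varpi_i,\varpi_i}^{\pm1}]$ recalled in \S\ref{subsec:1.2}; the $\mathbb{Z}$-indexed version follows by feeding in the gluing relations of Proposition~\ref{lem: glueing formulas}, which express every minor $\Delta^{(s)}_{c^k(\varpi_i),w(\varpi_i)}$ in terms of a neighboring copy and thereby show that the $\Delta^{(s)}_{\varpi_i,\varpi_i}$ already generate the relevant function field.

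The heart of the argument is the verification of the two local Starfish hypotheses at each mutable vertex $(i,s)$: that the initial cluster variables $\Delta^{(s)}_{\varpi_i,\varpi_i}$ are pairwise coprime irreducibles in $R(G,c)^U$, and that the once-mutated variable $\mu_{(i,s)}(\Delta^{(s)}_{\varpi_i,\varpi_i})$ again lies in $R(G,c)^U$, is irreducible, and is coprime to $\Delta^{(s)}_{\varpi_i,\varpi_i}$. The exchange relation at $(i,s)$ is an instance of the generalized minor identity \cite[Theorem~1.17]{FZ} --- the very same identity (\ref{eq3n}) used to prove Proposition~\ref{prop6} --- so the mutated variable is itself a generalized minor $\Delta^{(s')}_{w(\varpi_i),\varpi_i}$, which is $U$-invariant by \S\ref{subsec3.7} and hence belongs to $R(G,c)^U$; the gluing relations of Proposition~\ref{lem: glueing formulas} are precisely what match the $s$-indices appearing in the arrows of $\Xi$ with those produced by the identity. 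Irreducibility and coprimality of these minors then reduce to their known counterparts in $\mathbb{C}[G]$, transported to the UFD $R(G,c)^U$. Granting these checks, the Starfish Lemma identifies $R(G,c)^U$ with the upper cluster algebra $\overline{\mathcal{A}}(\Xi)$ of the seed.

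The main obstacle I anticipate is twofold. First, the rank is infinite: the quiver $\Xi$ is doubly infinite, so one cannot invoke a finite Starfish Lemma directly and must set up the upper cluster algebra and the attendant intersection of Laurent rings in infinite rank. This is manageable because $\Xi$ is locally finite --- each vertex has finitely many neighbors --- so the exchange relations and the coprimality conditions are genuinely local and can be checked vertex by vertex, the global statement being obtained by exhausting $\Xi$ by finite full subquivers. Second, one should not expect to upgrade the \emph{upper} cluster algebra to a cluster algebra here: unlike the $G$-invariant case, where the generators $\theta^{(s)}_i$ of $R(G,c)^G$ are themselves cluster variables, we have no a priori control showing that a generating set of $R(G,c)^U$ consists of cluster variables, so the Starfish Lemma delivers only the upper bound, which is exactly the statement of the theorem.
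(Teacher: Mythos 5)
First, a caveat: this is one of the results the paper explicitly does \emph{not} prove (it is quoted from \cite{FL}, and the introduction lists only Propositions~\ref{prop6} and \ref{prop7} and Theorems~\ref{Thm-FR} and \ref{thm:cross-sec discrete} as having proofs included), so there is no in-paper argument to compare against. Your global architecture is reasonable and likely in the spirit of \cite{FL}: the factorization $R(G,c)^U\simeq \mathbb{C}[G]^U\otimes\mathbb{C}[A^{\mathbb{Z}}]$ is correct, $\mathbb{C}[G]^U$ is indeed a UFD for $G$ simply connected, the local finiteness of $\Xi$ is the right way to handle infinite rank, and your explanation of why one should only expect an \emph{upper} cluster algebra matches the remark following the theorem.

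The genuine gap is in the heart of the Starfish verification: you check the local hypotheses by appealing to the wrong identity. Identity~(\ref{eq3n}) from \cite[Theorem 1.17]{FZ} is the one governing the mutations of the seed $\Theta$ of $R(G,c)^G$ (it produces the $T$-system of Proposition~\ref{Prop.7.1}); it is \emph{not} what governs the one-step mutations of $\Xi$, and the mutated variables are not of the form $\Delta^{(s')}_{w(\varpi_i),\varpi_i}$ with the same $i$ in general. Concretely: in type $A_1$ the exchange at $\Delta^{(s)}_{\varpi_1,\varpi_1}$ reads $\Delta^{(s)}_{\varpi_1,\varpi_1}\,x'=\Delta^{(s-1)}_{\varpi_1,\varpi_1}+\Delta^{(s+1)}_{\varpi_1,\varpi_1}$, and one computes $x'=\theta^{(s-1)}_{1,1}$ --- a $G$-invariant function, a Baxter-type relation, not a generalized minor of any single $g(t)$; in type $A_2$ with $c=s_1s_2$ the exchange at $\Delta^{(s)}_{\varpi_1,\varpi_1}$ yields $x'=\Delta^{(s-1)}_{s_2(\varpi_2),\varpi_2}$ via a three-term Pl\"ucker relation mixing $\varpi_1$- and $\varpi_2$-minors, which is not an instance of~(\ref{eq3n}). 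Identifying these adjacent cluster variables in closed form, showing they lie in $R(G,c)^U$, and proving irreducibility and coprimality is precisely the substantive content of the theorem, and your proposal asserts rather than supplies it. A secondary gap: the claim that the $\Delta^{(s)}_{\varpi_i,\varpi_i}$ generate $\mathrm{Frac}(R(G,c)^U)$ does not follow from the gluing relations of Proposition~\ref{lem: glueing formulas} alone; those only recover the $N+r$ minors $\Delta_{c^k(\varpi_i),\varpi_i}(g(0))$ with $0\le k\le m_i$, and one still needs the (true but nontrivial) birationality of the map $gU\mapsto\bigl(\Delta_{c^k(\varpi_i),\varpi_i}(g)\bigr)$ onto its image --- the dimension count $\sum_i(m_i+1)=N+r=\dim G/U$ is only the first step.
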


When $G$ is of type $A$, this upper cluster algebra coincides with its genuine underlying cluster algebra. But in type $D$ and $E$ it is not known whether these two cluster algebras coincide or not.

Obviously, $R(G,c)^G$ is a subalgebra of $R(G,c)^U$. Therefore, by the Laurent phenomenon, every element of $R(G,c)^G$ can be written as a Laurent polynomial in the initial cluster variables
$\Delta^{(s)}_{\varpi_i,\varpi_i}$ of $R(G,c)^U$.

\begin{example}
{\rm 
Let $G$ be of type $A_2$ and $c = s_1s_2$. Then
\[
\theta^{(0)}_1 = \frac{\Delta_{\varpi_1,\varpi_1}^{(0)}}{\Delta_{\varpi_1,\varpi_1}^{(1)}} 
+ \frac{\Delta_{\varpi_1,\varpi_1}^{(2)}}{\Delta_{\varpi_1,\varpi_1}^{(1)}}
\frac{\Delta_{\varpi_2,\varpi_2}^{(0)}}{\Delta_{\varpi_2,\varpi_2}^{(1)}}
+ \frac{\Delta_{\varpi_2,\varpi_2}^{(2)}}{\Delta_{\varpi_2,\varpi_2}^{(1)}}.
\]
\qed
}
\end{example}

The cluster expansion with respect to $\Xi$ of a $G$-invariant function has the following
important property.

\begin{Prop}\label{prop26}
Let $\phi\in R(G,c)^G$. The cluster expansion of $\phi$ with respect to the initial seed 
$\Xi$ of $R(G,c)^U$ is a Laurent polynomial in the variables
\[
 \frac{\Delta_{\varpi_i,\varpi_i}^{(s)}}{\Delta_{\varpi_i,\varpi_i}^{(s+1)}},\qquad
 (1\le i \le r,\quad s\in\mathbb{Z}).
\]
\end{Prop}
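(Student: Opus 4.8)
The plan is to exploit the grading of $R(G,c)^U$ by the weight lattice coming from the right action of the maximal torus $T\subset G$ on $B(G,c)$. First I would compute the $T$-weight of each initial cluster variable. Writing the $T$-action as $b\cdot t=(g(s)t)_{s\in\mathbb{Z}}$ and using the standard covariance of generalized minors under right multiplication by $T$, namely $\Delta_{\varpi_i,\varpi_i}(xt)=t^{\varpi_i}\,\Delta_{\varpi_i,\varpi_i}(x)$, one sees that $\Delta^{(s)}_{\varpi_i,\varpi_i}$ is a $T$-eigenvector of weight $\varpi_i$, \emph{independently of $s$}. Hence each ratio $\Delta^{(s)}_{\varpi_i,\varpi_i}/\Delta^{(s+1)}_{\varpi_i,\varpi_i}$ has weight $\varpi_i-\varpi_i=0$, and so does any $\phi\in R(G,c)^G$, which is $T$-invariant because $T\subset G$.

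Next I would feed this into the Laurent phenomenon. As already observed just before the statement, $\phi$ is a Laurent polynomial in the algebraically independent variables $\Delta^{(s)}_{\varpi_i,\varpi_i}$, and since these form a cluster this expression is unique. Because $T$ normalizes $U$, the subalgebra $R(G,c)^U$ is $T$-stable and thus carries a weight grading for which all the $\Delta^{(s)}_{\varpi_i,\varpi_i}$ are homogeneous. By uniqueness, the weight-$0$ element $\phi$ must then expand as a sum of Laurent monomials each of total weight $0$.

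It remains to describe the weight-$0$ monomials. A monomial $\prod_{i,s}\bigl(\Delta^{(s)}_{\varpi_i,\varpi_i}\bigr)^{m_{i,s}}$ has weight $\sum_i\bigl(\sum_s m_{i,s}\bigr)\varpi_i$; as the fundamental weights $\varpi_i$ are linearly independent, this vanishes exactly when $\sum_s m_{i,s}=0$ for every $i$. For fixed $i$, any finitely supported integer exponent vector with zero sum lies in the lattice generated by the vectors $e_s-e_{s+1}$ (solve $n_s-n_{s-1}=m_{i,s}$ by partial sums $n_s=\sum_{t\le s}m_{i,t}$, which has finite support precisely because the total sum is $0$). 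Thus by this telescoping rewriting the monomial becomes a product of integer powers of the ratios $\Delta^{(s)}_{\varpi_i,\varpi_i}/\Delta^{(s+1)}_{\varpi_i,\varpi_i}$; doing this for each $i$ expresses every monomial of $\phi$ in the desired ratios, which is the assertion.

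The step needing the most care is the compatibility between the $T$-grading and the cluster expansion: one must check that $R(G,c)^U$ is genuinely weight-graded with homogeneous initial variables, and that the uniqueness of the Laurent expansion then forces each monomial to inherit the weight $0$ of $\phi$. Once this is secured, the weight computation and the telescoping combinatorics are entirely routine.
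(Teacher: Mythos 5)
Your argument is correct and is essentially the paper's proof: the paper invokes the $P_+$-grading of $R(G,c)^U$ from \cite[\S6.1]{FL} with $\deg(\Delta^{(s)}_{\varpi_i,\varpi_i})=\varpi_i$ and $\deg=0$ on $R(G,c)^G$, which is exactly the $T$-weight grading you derive directly from the right torus action, and then concludes by linear independence of the $\varpi_i$ and the same telescoping rewriting. Your version merely spells out the homogeneity/uniqueness step and the finite-support check on the partial sums, which the paper leaves implicit.
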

\begin{proof}
Let $m$ be a Laurent monomial of the cluster expansion of $\phi$, and let $j\in I$.
Recall from \cite[\S6.1]{FL} that $R(G,c)^U$ is $P_+$-graded, where $P_+$ denotes the cone
of dominant weights of $\mathfrak{g}$. In this grading we have 
$\deg(\Delta_{\varpi_i,\varpi_i}^{(s)}) = \varpi_i$. We also have that every element 
of $R(G,c)^G$ is of degree 0. Since the fundamental weights $\varpi_i$ are linearly 
independent, it follows that the number of factors of the form $\Delta_{\varpi_j,\varpi_j}^{(s)}\ (s\in\mathbb{Z})$
in the numerator of $m$ is equal to the number of factors of the same form in the denominator of $m$.
The claim follows immediately. 
\end{proof}

\subsection{Cluster structure on $R(G,c)^{U^-}$}\label{sect3.8}

Clearly, the algebra $R(G,c)^{U^-}$ is isomorphic to $R(G,c)^{U}$. 
Hence it is endowed with an isomorphic cluster algebra structure. 
We only have to replace the $U$-invariant functions
$\Delta^{(s)}_{\varpi_i,\varpi_i}$ of the initial seed $\Xi$ of $R(G,c)^{U}$ by their $U^-$-invariant counterparts $\Delta^{(s)}_{w_0(\varpi_i),w_0(\varpi_i)}$.

It follows that every element of $R(G,c)^G$ can also be written as a Laurent polynomial in 
the initial cluster variables
$\Delta^{(s)}_{w_0(\varpi_i),w_0(\varpi_i)}$ of $R(G,c)^{U^-}$. 

\begin{example}
{\rm 
Let $G$ be of type $A_2$ and $c = s_1s_2$. Then
\[
\theta^{(0)}_1 = \frac{\Delta_{w_0(\varpi_1),w_0(\varpi_1)}^{(-2)}}{\Delta_{w_0(\varpi_1),w_0(\varpi_1)}^{(-1)}} 
+ \frac{\Delta_{w_0(\varpi_1),w_0(\varpi_1)}^{(0)}}{\Delta_{w_0(\varpi_1),w_0(\varpi_1)}^{(-1)}}
\frac{\Delta_{w_0(\varpi_2),w_0(\varpi_2)}^{(-1)}}{\Delta_{w_0(\varpi_2),w_0(\varpi_2)}^{(0)}}
+ \frac{\Delta_{w_0(\varpi_2),w_0(\varpi_2)}^{(1)}}{\Delta_{w_0(\varpi_2),w_0(\varpi_2)}^{(0)}}.
\]
\qed
}
\end{example}

Of course, we have a statement similar to Proposition~\ref{prop26}, in which $R(G,c)^U$ 
is replaced by $R(G,c)^{U^-}$, and
$\displaystyle\frac{\Delta_{\varpi_i,\varpi_i}^{(s)}}{\Delta_{\varpi_i,\varpi_i}^{(s+1)}}$
is replaced by 
$\displaystyle\frac{\Delta_{w_0(\varpi_i),w_0(\varpi_i)}^{(s)}}{\Delta_{w_0(\varpi_i),w_0(\varpi_i)}^{(s+1)}}$.

\subsection{$R(G,c)^U$, $R(G,c)^{U^-}$ and the categories $O^+_\mathbb{Z}$ and $O^-_\mathbb{Z}$}

Let $U_q(\widehat{\mathfrak{b}})$ be the Borel subalgebra of $U_q(\widehat{\mathfrak{g}})$. In \cite{HJ}, Hernandez and Jimbo have introduced 
a category $O$ of representations of $U_q(\widehat{\mathfrak{b}})$ containing all finite-dimensional representations and also many infinite-dimensional ones. Since every finite-dimensional $U_q(\widehat{\mathfrak{g}})$-module remains irreducible by restriction to $U_q(\widehat{\mathfrak{b}})$, the Grothendieck ring of $\mathcal{C}$ can be regarded as a subring of the Grothendieck ring of $O$.

In \cite{HL3}, two subcategories $O^+_\mathbb{Z}$ and $O^-_\mathbb{Z}$ of $O$ were introduced, 
both containing all restrictions of simple objects of $\mathcal{C}_\mathbb{Z}$, 
and it was shown that $K_0(O^+_\mathbb{Z})$ and $K_0(O^-_\mathbb{Z})$ are isomorphic and have the same cluster algebra structure. The building blocks of $O$ are the so-called positive and negative prefundamental representations, together with the one-dimensional representations parametrized by 
$P_\mathbb{Q} := \oplus_{1\le i\le r} \mathbb{Q}\,\varpi_i$. They are denoted respectively by 
\[
L(\Psi_{i,z}),\quad L(\Psi_{i,z}^{-1}),\quad [\varpi],\qquad (1\le i \le r,\ z\in \mathbb{C}^*,\ \varpi\in P_\mathbb{Q}). 
\]
Let $K^+_{0,\mathbb{Z}}$ denote the subring of $K_0(O)$ generated by the classes 
of the positive prefundamental representations 
$L(\Psi_{i,q^{2s-\xi_i}})\ (1\le i\le r,\ s\in\mathbb{Z})$ and $[\varpi]\ (\varpi\in P_\mathbb{Q})$. 
The ring $K_0(O^+_\mathbb{Z})$ can be regarded as a completion of 
$K^+_{0,\mathbb{Z}}$ in which certain infinite sums corresponding to objects of $O^+_\mathbb{Z}$ of infinite
length are allowed, see \cite[\S 5C]{HL3}.

The following proposition follows by comparing the respective initial seeds of the cluster
structures on $R(G,c)^U$ and $K^+_{0,\mathbb{Z}} \subset K_0(O^+_\mathbb{Z})$.

\begin{Prop}\label{prop28}
The assignment 
\[
[(\xi_i/2-s)\varpi_i][L(\Psi_{i,q^{2s-\xi_i}})] \mapsto \Delta^{(s)}_{\varpi_i,\varpi_i}
\]
extends to an injective algebra homomorphism $\mathbb{C}\otimes K^+_{0,\mathbb{Z}} \to R(G,c)^U$ matching
the cluster structures on both sides. 
\end{Prop}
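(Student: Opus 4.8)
The plan is to deduce Proposition~\ref{prop28} from a direct comparison of the two initial seeds, exactly as in the analogous identification already carried out for $R(G,c)^G$ and $K_0(\mathcal{C}_\mathbb{Z})$ in Proposition~\ref{Prop.8.1}. Concretely, I would show that the labelled quiver $\Xi$ of \S\ref{subsec3.7}, together with its cluster variables $\Delta^{(s)}_{\varpi_i,\varpi_i}$, is isomorphic to the initial seed of the cluster algebra structure carried by $\mathbb{C}\otimes K^+_{0,\mathbb{Z}}$, and then invoke the general functoriality of cluster algebras together with the Laurent phenomenon to produce the desired injective homomorphism.

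First I would recall from \cite{HL3} (and \cite{HL1}) the explicit initial seed of the cluster structure on $\mathbb{C}\otimes K_0(O^+_\mathbb{Z})$. Its cluster variables are precisely the normalized classes $[(\xi_i/2-s)\varpi_i]\,[L(\Psi_{i,q^{2s-\xi_i}})]$, indexed by $(i,s)\in[1,r]\times\mathbb{Z}$, while the one-dimensional classes $[\varpi]\ (\varpi\in P_\mathbb{Q})$ supply the invertible coefficient part; the normalizing factors $[(\xi_i/2-s)\varpi_i]$ are inserted precisely so that the prefundamental generators become genuine cluster variables. The given assignment $(i,s)\mapsto \Delta^{(s)}_{\varpi_i,\varpi_i}$ then matches the two vertex sets and their cluster variables tautologically, so the substance of the proof is to check that the exchange quiver of \cite{HL3} coincides, arrow for arrow, with $\Xi$.

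The heart of the argument is this quiver comparison. Since $\Xi$ is defined by the arithmetic rule that there is an arrow from $\Delta^{(s)}_{\varpi_i,\varpi_i}$ to $\Delta^{(t)}_{\varpi_j,\varpi_j}$ exactly when $c_{ij}\neq 0$ and $2t-\xi_j=2s-\xi_i+c_{ij}$, I would translate the spectral-parameter combinatorics of the prefundamental quiver of \cite{HL3} into this rule, carefully reconciling the two conventions for the shifts $q^{2s-\xi_i}$ and for the integers $\xi_i$ fixed by the inductive rule preceding Proposition~\ref{Prop.8.1}. One must simultaneously verify that the coefficient datum matches, the one-dimensional classes $[\varpi]$ being identified with the corresponding grading data on the $R(G,c)^U$ side, consistently with the $P_+$-grading used in Proposition~\ref{prop26}. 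Once the labelled seeds are shown to be isomorphic, an isomorphism of seeds induces an isomorphism of the associated cluster algebras intertwining all mutations, and composing it with the realization of $\Xi$ inside $R(G,c)^U$ yields a well-defined algebra homomorphism $\mathbb{C}\otimes K^+_{0,\mathbb{Z}}\to R(G,c)^U$ respecting the two cluster structures.

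Finally, injectivity is formal: the cluster variables $\Delta^{(s)}_{\varpi_i,\varpi_i}$ form an algebraically independent family, so the realization of each cluster is faithful; packaged differently, $\mathbb{C}\otimes K^+_{0,\mathbb{Z}}$ is the cluster algebra of the seed whereas $R(G,c)^U$ is its upper cluster algebra, so by the Laurent phenomenon the homomorphism is the inclusion of a cluster algebra into its upper cluster algebra, which is injective. I expect the only genuine obstacle to be the quiver comparison of the third paragraph, since the two quivers are prescribed by superficially different recipes---prefundamental modules with shifted spectral parameters on one side, and the explicit arithmetic condition defining $\Xi$ on the other---so that reconciling the $\xi_i$ and the powers of $q$ is exactly where care is required; every step downstream is a formal consequence of cluster-algebra functoriality and the Laurent phenomenon.
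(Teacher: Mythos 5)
Your proposal is correct and follows essentially the same route as the paper, which simply states that the result ``follows by comparing the respective initial seeds of the cluster structures on $R(G,c)^U$ and $K^+_{0,\mathbb{Z}}\subset K_0(O^+_\mathbb{Z})$''; your fleshing out of the seed comparison, the matching of coefficient data, and the injectivity via algebraic independence of the initial cluster and the Laurent phenomenon is exactly the intended argument.
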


In type $A$ this homomorphism is an isomorphism. In other types it may not be surjective if the 
cluster algebra is strictly contained in its upper cluster algebra.

Similarly, let $K^-_{0,\mathbb{Z}}$ denote the subring of $K_0(O)$ generated by the classes 
of the negative prefundamental representations $L(\Psi_{i,q^{2s-\xi_i}}^{-1})$ and $[\varpi]$.

Using \cite[\S8.2]{FL} and Proposition~\ref{lem: glueing formulas}, we also get the dual statement:
\begin{Prop}\label{prop29}
The assignment 
\[
[(s+m_i-(\xi_i+h)/2)\varpi_{\nu(i)}]\left[L\left(\Psi_{\nu(i),q^{2(s+m_i)-\xi_i-h}}^{-1}\right)\right] 
\mapsto 
\Delta^{(s)}_{w_0(\varpi_i),w_0(\varpi_i)}
\]
extends to an injective algebra homomorphism $\mathbb{C}\otimes K^-_{0,\mathbb{Z}} \to R(G,c)^{U^-}$ matching
the cluster structures on both sides. 
\end{Prop}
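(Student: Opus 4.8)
The plan is to prove Proposition~\ref{prop29} exactly as its counterpart Proposition~\ref{prop28}, by comparing initial seeds, with the pair $(U,O^+_\mathbb{Z})$ replaced throughout by $(U^-,O^-_\mathbb{Z})$. On the geometric side the relevant seed is the one described in \S\ref{sect3.8}: under the isomorphism $R(G,c)^{U}\cong R(G,c)^{U^-}$ the upper cluster algebra $R(G,c)^{U^-}$ carries the image of the seed $\Xi$, so that its initial cluster variables are the functions $\Delta^{(s)}_{w_0(\varpi_i),w_0(\varpi_i)}$ and its quiver is a relabeling of $\Xi$. On the representation side the relevant seed is that of the genuine cluster algebra $K^-_{0,\mathbb{Z}}\subset K_0(O^-_\mathbb{Z})$ from \cite{HL3}, whose initial cluster variables are the classes of the negative prefundamental representations and which, again by \cite{HL3}, is isomorphic to the seed governing $K^+_{0,\mathbb{Z}}$. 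The proof then consists in matching these two seeds under the explicit assignment of the statement and invoking transport of cluster structure, just as for Proposition~\ref{prop28}.

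The key computational step is to expose the indexing via the gluing relations. Iterating Proposition~\ref{lem: glueing formulas} exactly $m_i$ times and using $c^{m_i}(\varpi_i)=w_0(\varpi_i)$ gives
\[
\Delta^{(s)}_{w_0(\varpi_i),\,w_0(\varpi_i)} = \Delta^{(s)}_{c^{m_i}(\varpi_i),\,w_0(\varpi_i)} = \Delta^{(s+m_i)}_{\varpi_i,\,w_0(\varpi_i)},
\]
which is where the shift $s\mapsto s+m_i$ of the statement originates. Combined with the rule $w_0(\varpi_i)=-\varpi_{\nu(i)}$ defining the diagram involution $\nu$, this matches the $s$th $U^-$-cluster variable at node $i$ with a negative prefundamental at node $\nu(i)$ of spectral parameter $q^{2(s+m_i)-\xi_i-h}$, $h$ the Coxeter number; the one-dimensional twist $[(s+m_i-(\xi_i+h)/2)\varpi_{\nu(i)}]$ is then the weight correction making the assignment homogeneous, exactly as $[(\xi_i/2-s)\varpi_i]$ does in Proposition~\ref{prop28}. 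All of this combinatorics — the identities linking $m_i$, $\xi_i$, $\xi_{\nu(i)}$ and $h$, and the fact that $(i,s)\mapsto(\nu(i),s+m_i)$ is an isomorphism of the two labelled quivers — is assembled in \cite[\S8.2]{FL}. Once the vertices, the frozen/mutable partition and the arrows are matched, the transport-of-structure principle already used for Proposition~\ref{prop28} (the Starfish lemma of \cite{FWZ}) upgrades the bijection of seeds to an isomorphism identifying $\mathbb{C}\otimes K^-_{0,\mathbb{Z}}$ with the cluster algebra sitting inside the upper cluster algebra $R(G,c)^{U^-}$, whence injectivity, both algebras being domains.

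The main obstacle I anticipate is precisely the spectral-parameter bookkeeping: one must verify that the geometric dualization ($U$ to $U^-$) and the representation-theoretic dualization ($O^+_\mathbb{Z}$ to $O^-_\mathbb{Z}$) produce the \emph{same} reindexing $(i,s)\mapsto(\nu(i),\,s+m_i)$ and the same exponent $2(s+m_i)-\xi_i-h$. This reduces to a handful of identities among $m_i$, $\xi_i$, $\xi_{\nu(i)}$ and $h$ that are uniform in type $A,D,E$ but sensitive to the chosen Coxeter element $c$; the gluing formula disposes of the $m_i$-shift cleanly, so the residual difficulty lies entirely in the interaction of the height function $\xi$ with $w_0$ and $c$. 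By contrast the structural parts — transport of the cluster structure, the identification of frozen variables, and injectivity — are routine, since they merely repeat the argument already carried out for Proposition~\ref{prop28}.
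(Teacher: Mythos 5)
Your proposal is correct and follows essentially the same route as the paper, which derives Proposition~\ref{prop29} from Proposition~\ref{prop28} precisely by invoking the gluing relations of Proposition~\ref{lem: glueing formulas} (giving $\Delta^{(s)}_{w_0(\varpi_i),w_0(\varpi_i)}=\Delta^{(s+m_i)}_{\varpi_i,w_0(\varpi_i)}$ and hence the shift $s\mapsto s+m_i$) together with the combinatorial bookkeeping of \cite[\S8.2]{FL}. Your expansion of the seed-matching and of the role of $\nu$ and the weight twist is consistent with that one-line argument.
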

Here $\nu$ denotes the involution of $[1,r]$ given by $w_0(\alpha_i) = -\alpha_{\nu(i)}$, and $h$ is the Coxeter number.

\section{$q$-characters and a discrete analogue of the $q$-difference Miura transformation}

\label{sect-discrete-Miura}

\subsection{Cluster expansions and $q$-characters}
\label{subsec3.10}

Recall that Frenkel and Reshetikhin  have associated to every finite-dimensional $U_q(\widehat{\mathfrak{g}})$-module~$M$ its $q$-character $\chi_q(M)$ \cite{FR}.
This is a Laurent polynomial in variables $Y_{i,a} \ (1 \leq i \leq r, \ a \in  \mathbb{C}^*)$,
encoding the dimensions of the loop weight spaces of $M$. The map $M \mapsto \chi_q(M)$
induces an injective homomorphism: 
\[
\chi_q : K_0(\mathcal{C}) \to \mathbb{Z}[Y_{i,a}^{\pm 1}\mid 1\le i \le r,\ a\in \mathbb{C}^*].
\]
We can restrict $\chi_q$ to $\mathcal{C}_\mathbb{Z}$ and get  
an injective homomorphism: 
\[
\chi_q : K_0(\mathcal{C}_\mathbb{Z}) \to \mathbb{Z}\left[Y_{i,q^{2s+1-\xi_i}}^{\pm 1}\mid 1\le i \le r,\ s\in\mathbb{Z}\right].
\]

\begin{example}\label{ex-29}
{\rm 
Let $G$ be of type $A_2$ and $c = s_1s_2$. The $q$-character of the 3-dimensional fundamental module
$L(Y_{1,q})\in \mathcal{C}_\mathbb{Z}$ is given by
\[
\chi_q(L(Y_{1,q})) = Y_{1,q} + Y_{1,q^3}^{-1}Y_{2,q^2} + Y_{2,q^4}^{-1}.
\]
\qed
}
\end{example}

In \cite{HL2,HL3} it was shown that $q$-characters can be obtained as cluster expansions with respect
to a distinguished cluster of the Grothendieck ring. We can reformulate this in terms of $(G,c)$-bands as follows.
In \cite{FH}, Frenkel and Hernandez have described a connection between $q$-characters of objects of $\mathcal{C}$ and the Grothendieck ring of $O^+$. For an object $V$ in $O^+$, let $[V]$ denote its class in $K_0(O^+)$.
They proved the \emph{generalized Baxter's relations}, which state that if $M$
is a finite-dimensional $U_q(\widehat{\mathfrak{g}})$-module, and if we 
perform the following substitution in its $q$-character $\chi_q(M)$: 
\[
Y_{i,a} \to [\varpi_i]\frac{[L(\Psi_{i,q^{-1}a})]}{[L(\Psi_{i,qa})]},
\]
then we get an expression of $[M]$ in the fraction field of $K_0(O^+)$.

\begin{example}
{\rm 
Continuing Example~\ref{ex-29}, if we apply Baxter's relations to $L(Y_{1,q})$
we get
\[
[L(Y_{1,q})] = [\varpi_1]\frac{[L(\Psi_{1,q^{0}})]}{[L(\Psi_{1,q^2})]} + 
[\varpi_2{-}\varpi_1]\frac{[L(\Psi_{1,q^{4}})][L(\Psi_{2,q})]}{[L(\Psi_{1,q^2})][L(\Psi_{2,q^3})]}
+
[-\varpi_2]\frac{[L(\Psi_{2,q^5})]}{[L(\Psi_{2,q^3})]}.
\]
Multiplying this relation by the common denominator, one can interpret it as a 
calculation of the class in $K^+_{0,\mathbb{Z}}$ of the tensor product
$L(Y_{1,q})\otimes L(\Psi_{1,q^2}) \otimes L(\Psi_{2,q^3})$.
\qed}
\end{example}

Let $M\in\mathcal{C}_\mathbb{Z}$. Let $\phi$ (\resp $\phi^-$) denote the image of $[M]$ in $R(G,c)^U$ (\resp  $R(G,c)^{U^-}$) under the homomorphism
of Proposition~\ref{prop28} (\resp Proposition~\ref{prop29}).
In fact, since $M\in\mathcal{C}_\mathbb{Z}$ we know that  $\phi$ (\resp $\phi^-$) belongs to the subalgebra
$R(G,c)^G$.
By Proposition~\ref{prop26}, this shows that the cluster expansion of $\phi$  
is a Laurent polynomial in the variables 
\[
\frac{\Delta_{\varpi_i,\varpi_i}^{(s)}}{\Delta_{\varpi_i,\varpi_i}^{(s+1)}},
\qquad (1\le i \le r,\ s\in\mathbb{Z}).
\]
Comparing this Laurent polynomial with the one obtained from $\chi_q(M)$ 
via the generalized Baxter's relations, we obtain that if we 
perform in $\phi$ the substitution 
\[
\frac{\Delta_{\varpi_i,\varpi_i}^{(s)}}{\Delta_{\varpi_i,\varpi_i}^{(s+1)}} \to 
\frac{[(\xi_i/2-s)\varpi_i][L(\Psi_{i,q^{2s-\xi_i}})]}
{[(\xi_i/2-s-1)\varpi_i][L(\Psi_{i,q^{2s+2-\xi_i}})]}
=
[\varpi_i]\frac{[L(\Psi_{i,q^{2s-\xi_i}})]}
{[L(\Psi_{i,q^{2s+2-\xi_i}})]}
\to Y_{i,q^{2s+1-\xi_i}},
\]
then we get the $q$-character $\chi_q(M)$.

There are also generalized Baxter's relations in $K_0(O^-)$, see \cite[\S5.B]{HL3}.
Comparing again Laurent polynomial expansions we obtain that if we 
perform in $\phi^-$ the  substitution 
\begin{equation}\label{Eq9}
\frac{\Delta_{w_0(\varpi_i),w_0(\varpi_i)}^{(s)}}{\Delta_{w_0(\varpi_i),w_0(\varpi_i)}^{(s+1)}} \to 
[-\varpi_{\nu(i)}]\frac{\left[L\left(\Psi_{\nu(i),q^{2(s+m_i)-\xi_i-h}}^{-1}\right)\right]}
{\left[L\left(\Psi_{\nu(i),q^{2(s+m_i+1)-\xi_i-h}}^{-1}\right)\right]}
\to Y_{i,q^{2(s+m_i)+1-\xi_i}},
\end{equation}
then we also get the $q$-character $\chi_q(M)$.

\subsection{Discrete Miura transformation and $q$-characters}
\label{subsecMiura}

We can reinterpret the above calculation of $q$-characters in terms of bands by using
a discrete analogue of the $q$-difference Miura transformation $\Psi$ of \S\ref{sect2}. We start with some preparation.

Let $\Omega$ be the subset of $G$  consisting of elements $g$ that admit a  \emph{twisted Birkhoff decomposition}: they can be written as $g = ub$ for some (unique) $u\in U$ and  $b\in B^-.$ 
We recall that $\Omega$ is the principal open subset of $G$ determined by the non-vanishing of the functions $\Delta_{w_0(\varpi_i),w_0(\varpi_i)} \ (1\le i \le r)$.
For a nonnegative integer $n \in \mathbb{N}$, we consider the principal open subset $B(G,c)^{(n)}$ of $B(G,c)$ defined by
\[
B(G,c)^{(n)}:= \{ (g(s))_{s \in \mathbb{Z}} \in B(G,c) \mid g(t) \in \Omega \ \ \mbox{for} \ -n \leq t \leq n \}.
\]
As the $B(G,c)^{(n)}$ are affine schemes, the limit of the inverse system of schemes consisting of the natural inclusions between these open subsets of $B(G,c)$ is represented by an affine scheme, that we denote by $B(G,c)^\circ$. 
One can easily verify that the morphism $B(G,c)^\circ \to B(G,c)$ obtained by composing the natural morphsim $B(G,c)^\circ \to B(G,c)^{(n)}$ with the inclusion $B(G,c)^{(n)} \subseteq B(G,c)$ is independent of $n$ and is a monomorphism. 
Hence, we can identify $B(G,c)^\circ$ with its image under this morphism. This yields that
$$
B(G,c)^\circ= \{(g(s))_{s \in \mathbb{Z}} \in B(G,c) \mid g(s) \in \Omega \ \ \mbox{for} \ s \in \mathbb{Z} \}.
$$
We stress that the morphism $B(G,c)^\circ \to B(G,c)$ is not an open embedding.
Indeed, the only open subset of $B(G,c)$ contained in its image is the empty set. 

Let $R(G,c)^\circ$ be the coordinate ring of $B(G,c)^\circ$. 
We have that $R(G,c)^\circ$ is the localisation of the ring $R(G,c)$ at the multiplicative system consisting of monomials in the elements $\Delta_{w_0(\varpi_i), w_0(\varpi_i)}^{(s)} \ (1\le i \le r,\ s \in \mathbb{Z})$. Moreover, the pullback under the morphism $B(G,c)^\circ \to B(G,c)$ is the localisation map. 
Finally, observe that the open subsets $B(G,c)^{(n)}$  of $B(G,c)$ are stable under the action of $U^-$. 
Therefore, the scheme $B(G,c)^\circ$ inherits a right action of $U^-$, with respect to which the morphism $B(G,c)^\circ \to B(G,c)$ is equivariant.

We now move to the definition of the discrete analogue of the $q$-difference Miura transformation. 
For $(g(s))_{s\in\mathbb{Z}}$ in $B(G,c)^\circ$, we can write the twisted Birkhoff decompositions
\[
 g(s) = u(s)b(s),\qquad (s\in\mathbb{Z}).
\]
Let us also write $a(s):=g(s)g(s+1)^{-1}\in A$. We then obtain that
\begin{equation}
\label{eq:10}
     l(s) := b(s)b(s+1)^{-1} = u(s)^{-1}a(s)u(s+1) \in B^-\cap (U\overline{c}U) = L^{c,e}.
\end{equation}
Hence, the assignment $(g(s))_{s\in\mathbb{Z}} \mapsto (l(s))_{s \in \mathbb{Z}}$ defines a morphism 
\[
P: B(G,c)^\circ \to (L^{c,e})^\mathbb{Z}.
\]
Notice that we have
\[
\Delta_{w_0(\varpi_i),w_0(\varpi_i)}(l(s)) = 
\frac{\Delta_{w_0(\varpi_i),w_0(\varpi_i)}(b(s))}{\Delta_{w_0(\varpi_i),w_0(\varpi_i)}(b(s+1))}
\]
\begin{equation}\label{Eq6}
\qquad\qquad\qquad\qquad\quad= \frac{\Delta_{w_0(\varpi_i),w_0(\varpi_i)}(g(s))}{\Delta_{w_0(\varpi_i),w_0(\varpi_i)}(g(s+1))}.
\end{equation}
Indeed, the first equality is a consequence of the fact that  $b(s)\in B^-$ for every $s$.
The second one holds because if $g=ub$ is the twisted Birkhoff decomposition of $g \in \Omega$, we have 
\[
 \Delta_{w_0(\varpi_i),w_0(\varpi_i)}(g) = \Delta_{w_0(\varpi_i),w_0(\varpi_i)}(b),
 \qquad (1\le i \le r).
\]
Recall that the functions $\Delta^{(s)}_{w_0(\varpi_i), w_0(\varpi_i)} \, (1\le i \le r, s \in \mathbb{Z})$ form an initial cluster of the cluster structure of $R(G,c)^{U^-}$, and they are therefore algebraically independent. Hence, we deduce from Equation (\ref{Eq6}) that the pullback homomorphism 
\[
P^* : \mathbb{C}[(L^{c,e})^\mathbb{Z}] \to R(G,c)^\circ
\]
is injective.
Thus, for any affine scheme $X$  and any morphism $f: B(G,c)^\circ \to X$ such that the image of $f^*$ is contained in the image of $P^*$, there exists a unique morphism $f': (L^{c,e})^\mathbb{Z} \to X$ such that $f=f' \circ P.$ 

Let $F$ be the restriction to $B(G,c)^\circ $   
of the natural morphism 
\[
B(G,c)  \to B(G,c) \sslash G:= \mathrm{Spec} (R(G,c)^G)
\]
dual to the embedding of algebras $R(G,c)^G \subset R(G,c)$.
We saw in \S\ref{sect3.8} that every element of $R(G,c)^G$ is a Laurent polynomial 
in the variables 
\[
\frac{\Delta_{w_0(\varpi_i),w_0(\varpi_i)}^{(s)}}{\Delta_{w_0(\varpi_i),w_0(\varpi_i)}^{(s+1)}}.
\]
Using again Equation~(\ref{Eq6}), and the fact that $\mathbb{C}[L^{c,e}]$ can be described as the Laurent polynomial ring in the variables
$\Delta_{w_0(\varpi_i),w_0(\varpi_i)}$,
we deduce that the image of the homomorphism $F^*$ is contained in the image of $P^*$. 
Hence, the morphism $F$ factorises uniquely by $P$. In other words, there exists
a unique morphism $F'$ making the following diagram commutative: 
\[
\xymatrix@-1.0pc{
&B(G,c)^\circ \ar[rd]^{P}\ar[rr]^{F}&
&  B(G,c) \sslash G.
\\
&& (L^{c,e})^\mathbb{Z}\ar[ur]^{F'} &&
}
\]
We have seen in \S\ref{subsec-G-act} that the scheme $B(G,c) \sslash G$ can be identified with $A^\mathbb{Z}$ via the isomorphism induced from the morphism $B(G,c) \to A^\mathbb{Z}$ sending a band $(g(s))_{s \in \mathbb{Z}}$ to $(a(s))_{s \in \mathbb{Z}}$, where $a(s)=g(s)g(s+1)^{-1}$. 
We denote by $H : (L^{c,e})^\mathbb{Z} \to A^\mathbb{Z}$ the composition of the morphism $F'$ with this isomorphism $B(G,c) \sslash G \simeq A^\mathbb{Z}$.
We regard the map $H : (L^{c,e})^\mathbb{Z} \to A^\mathbb{Z}$ as \emph{a discrete analogue of the $q$-difference Miura transformation $\Psi$} of \S\ref{sect2}. 
We will further justify this terminology in \S\ref{sec: discrete Gauge}. 

Consider now the isomorphism 
\[
 \kappa\colon \mathbb{C}\left[Y_{i,q^{2s+1-\xi_i}}^{\pm 1}\mid 1\le i \le r,\ s\in\mathbb{Z}\right] \to \mathbb{C}\left[(L^{c,e})^\mathbb{Z}\right]
\]
assigning to the variable $Y_{i,q^{2(s +m_i)+1-\xi_i}}$ the function on $(L^{c,e})^\mathbb{Z}$ sending $l \in (L^{c,e})^\mathbb{Z}$ to $\Delta_{w_0(\varpi_i), w_0(\varpi_i)}(l(s))$.
By Equation~(\ref{Eq9}), Equation~(\ref{Eq6}), and by construction of $H$, this fits into the following commutative diagram
\begin{equation}
    \label{eq: H and w character}
\begin{tikzcd}
	{\mathbb{C}[A^\mathbb{Z}] \simeq R(G,c)^G} && {\mathbb{C}[(L^{c,e})^\mathbb{Z}]} \\
	{\mathbb{C} \otimes K_0(\mathcal{C}_\mathbb{Z})} && {\mathbb{C}\left[Y_{i,q^{2s+1-\xi_i}}^{\pm 1}\mid 1\le i \le r,\ s\in\mathbb{Z}\right]}
	\arrow["{H^*}", from=1-1, to=1-3]
	\arrow["\iota", from=1-1, to=2-1]
	\arrow["\kappa",from=2-3, to=1-3]
	\arrow["{\chi_q}", from=2-1, to=2-3]
\end{tikzcd}
\end{equation}

Thus, we have proved:
\begin{Thm}\label{Thm-FR}
Under the above isomorphisms 
\[
{\mathbb{C}[A^\mathbb{Z}] \simeq \mathbb{C} \otimes K_0(\mathcal{C}_\mathbb{Z})} \mbox{ and \ } 
\mathbb{C}\left[(L^{c,e})^\mathbb{Z}\right] \simeq  \mathbb{C}\left[Y_{i,q^{2s+1-\xi_i}}^{\pm 1}\right] 
\]
the pullback $H^*$ of the discrete analogue $H$ of the $q$-difference Miura transformation coincides 
with the $q$-character homomorphism $\chi_q$. \qed
\end{Thm}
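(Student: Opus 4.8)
The plan is to deduce the theorem from the commutativity of diagram~(\ref{eq: H and w character}), which is precisely the assertion that $H^*$ and $\chi_q$ are intertwined by the two displayed isomorphisms. Concretely, I would show that the two composite maps $\mathbb{C}[A^\mathbb{Z}] \to \mathbb{C}[(L^{c,e})^\mathbb{Z}]$ agree on every element of $R(G,c)^G$: on one side $H^*$, and on the other the coordinate isomorphism out of $\mathbb{C}[Y_{i,q^{2s+1-\xi_i}}^{\pm 1}]$ precomposed with $\chi_q\circ\iota$, where $\iota$ is the isomorphism of Proposition~\ref{Prop.8.1}.

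First I would make $H^*$ explicit. By construction $F = F'\circ P$, so on coordinate rings $F^* = P^*\circ (F')^*$; since $F^*$ is the inclusion $R(G,c)^G \hookrightarrow R(G,c)^\circ$ and $P^*$ is injective, $(F')^*$ is the unique lift of that inclusion through $P^*$. Taking $\psi := \iota^{-1}([M]) \in R(G,c)^G$ for $M\in\mathcal{C}_\mathbb{Z}$, the $U^-$ analogue of Proposition~\ref{prop26} expresses the cluster expansion of $\psi$ as a Laurent polynomial in the ratios $\Delta^{(s)}_{w_0(\varpi_i),w_0(\varpi_i)}/\Delta^{(s+1)}_{w_0(\varpi_i),w_0(\varpi_i)}$. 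By Equation~(\ref{Eq6}) these ratios are exactly the images under $P^*$ of the coordinate functions $l\mapsto \Delta_{w_0(\varpi_i),w_0(\varpi_i)}(l(s))$ on $(L^{c,e})^\mathbb{Z}$. Hence $(F')^*(\psi)$ is obtained from this cluster expansion by replacing each ratio with the corresponding coordinate; transporting along the coordinate isomorphism, which sends $Y_{i,q^{2(s+m_i)+1-\xi_i}}$ to that same coordinate, this is exactly the substitution $\Delta^{(s)}_{w_0(\varpi_i),w_0(\varpi_i)}/\Delta^{(s+1)}_{w_0(\varpi_i),w_0(\varpi_i)} \mapsto Y_{i,q^{2(s+m_i)+1-\xi_i}}$.

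Next I would compute the other composite. Since $[M] = \iota(\psi)$, the discussion of \S\ref{subsec3.10}---resting on the negative version of the generalized Baxter's relations of Frenkel and Hernandez \cite{FH} (see \cite[\S5.B]{HL3})---recovers $\chi_q(M)$ from the very same cluster expansion of $\psi$ via the two-step substitution~(\ref{Eq9}), whose net effect is again $\Delta^{(s)}_{w_0(\varpi_i),w_0(\varpi_i)}/\Delta^{(s+1)}_{w_0(\varpi_i),w_0(\varpi_i)} \mapsto Y_{i,q^{2(s+m_i)+1-\xi_i}}$. As both composites apply one and the same substitution to one and the same Laurent polynomial, they agree for every $\psi$, the diagram commutes, and this is the content of Theorem~\ref{Thm-FR}.

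The step I expect to demand the most care is the alignment of spectral parameters: one must check that the index shift packaged into the coordinate isomorphism on $(L^{c,e})^\mathbb{Z}$ matches, term by term, the shift produced by the negative Baxter relations in~(\ref{Eq9}), including the translation by $m_i$ and the Coxeter number $h$ that enter through the involution $\nu$. The remaining inputs are comparatively formal: the algebraic independence of the frozen variables $\Delta^{(s)}_{w_0(\varpi_i),w_0(\varpi_i)}$, which form an initial cluster of $R(G,c)^{U^-}$, secures the injectivity of $P^*$ and hence the unambiguous characterisation of $(F')^*$. Once the indexing is reconciled, the identification $H^* = \chi_q$ is forced.
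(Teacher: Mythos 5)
Your proposal is correct and follows essentially the same route as the paper: the proof of Theorem~\ref{Thm-FR} in \S\ref{subsecMiura} is exactly the combination of the factorisation $F = F'\circ P$ with $P^*$ injective, Equation~(\ref{Eq6}) identifying $P^*$ of the coordinates of $(L^{c,e})^\mathbb{Z}$ with the ratios $\Delta^{(s)}_{w_0(\varpi_i),w_0(\varpi_i)}/\Delta^{(s+1)}_{w_0(\varpi_i),w_0(\varpi_i)}$, the $U^-$ version of Proposition~\ref{prop26}, and the Baxter substitution~(\ref{Eq9}), which is precisely where the paper has already reconciled the spectral-parameter shifts you flag as the delicate point. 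No gaps.
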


This verifies the expectation of Frenkel and Reshetikhin \cite[\S 8]{FR} for all types $A, D, E$
and all Coxeter elements $c$. 
In classical types $A, D$, explicit calculations of the $q$-difference Miura transformation were given in \cite[\S 11]{FR0}
(for an implicit choice of a particular Coxeter element $c$).

\begin{example}
{\rm
Let $G=SL(3)$ and $c = s_1s_2$. The twisted Birkhoff decomposition is computed as follows. Let $g \in \Omega$. In other words,  
\[
 g =
 \pmatrix{
 a&b&c\cr
 d&e&f\cr
 h&i&k
 } 
\]
is such that $k\not = 0$ and $ek-fi\not = 0$. Then there holds
\[
 g = \left(
 \begin{array}{ccc}
 1&{\displaystyle\frac{bk-ci}{ek-fi}}&{\displaystyle\frac{c}{k}}\\[3mm]
 0&1&{\displaystyle\frac{f}{k}}\\[3mm]
 0&0&1 
 \end{array}
 \right)
\left(
 \begin{array}{ccc}
 {\displaystyle\frac{1}{ek-fi}}&0&0\\[3mm]
 {\displaystyle\frac{dk-fh}{k}}&{\displaystyle\frac{ek-fi}{k}}&0\\[3mm]
 h&i&k 
 \end{array}
 \right).
\]
Consider $(g(s))_{s\in\mathbb{Z}}\in B(SL(3),c)$. 
As explained in Example~\ref{exa14}, we can write
\[
g(s) =
\pmatrix{
 b_{s,1}&b_{s,2}&b_{s,3}\cr
 b_{s+1,1}&b_{s+1,2}&b_{s+1,3}\cr
 b_{s+2,1}&b_{s+2,2}&b_{s+2,3}
 },\qquad
 (s\in\mathbb{Z}),
\]
and
\[
a(s) = g(s)g(s+1)^{-1} =
\pmatrix{
 \theta^{(s)}_1&-\theta^{(s)}_2&1\cr
 1&0&0\cr
 0&1&0
},\qquad
 (s\in\mathbb{Z}).
\]
Assume that $(g(s))_{s\in\mathbb{Z}}\in B(SL(3),c)^\circ$, that is,
\[
b_{s,3}\not = 0,\qquad b_{s,2}b_{s+1,3}-b_{s+1,2}b_{s,3}\not = 0,\qquad (s\in \mathbb{Z}). 
\]
Then $g(s)$ has a twisted Birkhoff decomposition $g(s)= u(s)b(s)$, and we can
calculate
\[
b(s)b(s+1)^{-1} =
\]
\[
\left(
 \begin{array}{ccc}
 {\displaystyle\frac{b_{s+2,2}b_{s+3,3}-b_{s+3,2}b_{s+2,3}}{b_{s+1,2}b_{s+2,3}-b_{s+2,2}b_{s+1,3}}}&0&0\\[3mm]
 1&{\displaystyle\frac{(b_{s+1,2}b_{s+2,3}-b_{s+2,2}b_{s+1,3})}{(b_{s+2,2}b_{s+3,3}-b_{s+3,2}b_{s+2,3})}
 \frac{b_{s+3,3}}{b_{s+2,3}}}&0\\[3mm]
 0&1& {\displaystyle\frac{b_{s+2,3}}{b_{s+3,3}}}
 \end{array}
 \right),
\]
that is, writing for short $\Delta^{(s)}_{w_0(\varpi_i)} := \Delta_{w_0(\varpi_i),w_0(\varpi_i)}(g(s))$,
\[
l(s) := b(s)b(s+1)^{-1} =
\left(
\begin{array}{ccc}
 {\displaystyle\frac{\Delta^{(s+1)}_{w_0(\varpi_2)}}{\Delta^{(s)}_{w_0(\varpi_2)}}}&0&0\\[3mm]
 1&{\displaystyle\frac{\Delta^{(s)}_{w_0(\varpi_2)}}{\Delta^{(s+1)}_{w_0(\varpi_2)}}
 \frac{\Delta^{(s+1)}_{w_0(\varpi_1)}}{\Delta^{(s)}_{w_0(\varpi_1)}}}&0\\[3mm]
 0&1& {\displaystyle\frac{\Delta^{(s)}_{w_0(\varpi_1)}}{\Delta^{(s+1)}_{w_0(\varpi_1)}}}
 \end{array}
\right) 
\in L^{c,e}.
\]
Put 
\[
l(s):=
\pmatrix{
l_1^{(s)}&0&0\cr
1&l_2^{(s)}&0\cr
0&1&l_3^{(s)}
}
,\qquad (s\in\mathbb{Z}). 
\]
The discrete Miura transformation $H$ maps $l(s)$ to $a(s)$. One can calculate explicitly
the coordinates $\theta^{(s)}_i$ of $a(s)$ in terms of the coordinates $l^{(s)}_i$ of $l(s)$.
For example one has
\[
\theta^{(s)}_1 = \frac{\Delta_{w_0(\varpi_2)}^{(s+1)}}{\Delta_{w_0(\varpi_2)}^{(s)}}
+ \frac{\Delta_{w_0(\varpi_1)}^{(s)}}{\Delta_{w_0(\varpi_1)}^{(s-1)}}
\frac{\Delta_{w_0(\varpi_2)}^{(s-1)}}{\Delta_{w_0(\varpi_2)}^{(s)}}
+ \frac{\Delta_{w_0(\varpi_1)}^{(s-2)}}{\Delta_{w_0(\varpi_1)}^{(s-1)}} 
=
l^{(s)}_1+l^{(s-1)}_2+l^{(s-2)}_3,\quad (s\in\mathbb{Z}).
\]
\qed
}
\end{example}

\begin{remark}
{\rm
Instead of working with $U^-$-invariants and the twisted Birkhoff decomposition, we could
have chosen to use $U$-invariants and the usual Birkhoff decomposition:
\[
 g = u^-b^+,\qquad (u^-\in U^-,\ b^+\in B)\, 
\]
which is well-defined under the condition $\Delta_{\varpi_i,\varpi_i}(g)\not = 0$ for every
$1\le i \le r$. Then if $(g(s))_{s\in\mathbb{Z}}$ is a $(G,c)$-band such that every $g(s)$ has a Birkhoff 
decomposition, we can write
\[
 g(s) = u^-(s)b^+(s),\qquad (s\in\mathbb{Z},\ u^-(s)\in U^-,\ b^+(s)\in B),
\]
and
\[
 \ell(s) := b^+(s)b^+(s+1)^{-1} = u^-(s)^{-1}a(s)u^-(s+1) \in B^+ \cap (U^-\overline{c}U^-) =: L^{e,c}.
\]
Then we would get a similar diagram
\[
\xymatrix@-1.0pc{
&B(G,c)^\dag \ar[rd]^{}\ar[rr]^{}&
&  B(G,c) \sslash G \simeq A^\mathbb{Z}
\\
&& (L^{e,c})^\mathbb{Z}\ar[ur]^{H'} &&
}
\]
which defines another discrete Miura transformation $H'$ mapping $(\ell(s))_{s\in\mathbb{Z}}\in (L^{e,c})^\mathbb{Z}$ to $(a(s))_{s\in\mathbb{Z}}\in A^\mathbb{Z}$. This choice would give slightly simpler formulas involving the $U$-invariant functions $\Delta^{(s)}_{\varpi_i,\varpi_i}$ instead of the $U^-$-invariant functions $\Delta^{(s)}_{w_0(\varpi_i),w_0(\varpi_i)}$, but on the other hand the choice of $H$ allows a direct comparison with the $q$-difference Miura transformation $\Psi$ of \cite{FR,FRS,SS} (see \S\ref{sect2}). 
}
\end{remark}

\subsection{Discrete gauge action and a cross-section theorem}
\label{sec: discrete Gauge}

Let us consider a discrete analogue of the action by $q$-gauge transformation of \S\ref{sect2}. 
Namely, we let the algebraic group scheme $G^\mathbb{Z}$ act on the left on itself by the \emph{discrete gauge transformation}:
\begin{equation}
    \label{eq:disc Gauge}
g \cdot h := \bigl(g(s)h(s)g(s+1)^{-1}\bigr)_{s \in \mathbb{Z}}, \quad \bigl(g= (g(s))_{s \in \mathbb{Z}}, \ h=(h(s))_{s \in \mathbb{Z}} \in G^\mathbb{Z} \bigr).
\end{equation}
This action is intimately related with $(G,c)$-bands. 
For instance, let $\alpha_e : G^\mathbb{Z} \to G^\mathbb{Z}$ be the morphism corresponding to the orbit of the identity $e \in G^\mathbb{Z}$. 
   In other words, $\alpha_e$ sends a point $(g(s))_{s \in \mathbb{Z}}$ to $(g(s)g(s+1)^{-1})_{s \in \mathbb{Z}}$.
   Then the scheme $B(G,c)$ is by definition the schematic pre-image $\alpha_e^{-1}(A^\mathbb{Z})$ of $A^\mathbb{Z}$.

   Moreover, let $b:=(g(s))_{s \in \mathbb{Z}} \in B(G,c)^{\circ}$, and let $l:=P(b) \in (L^{c,e})^\mathbb{Z}$ and $a:= H(l)$. Then Equation (\ref{eq:10}) implies that $a$ and $l$ are conjugated under the discrete gauge transformation by an element of $U^{\mathbb{Z}}$. 
This motivates us to study this action further.

Put $M:=U \bar{c}U \subseteq G$.
The closed subscheme $M^\mathbb{Z}$ of $G^\mathbb{Z}$ is stable under the restricted action by discrete gauge transformation of $U^\mathbb{Z}$. 
In this section, we will prove the following analogue of the cross-section theorem for the $q$-gauge action (Theorem \ref{thm:cross-sec loop}).

\begin{Thm}
    \label{thm:cross-sec discrete}
    For any $\mathbb{C}$-algebra $R$, the group $U^\mathbb{Z}(R)$ acts freely on $M^\mathbb{Z}(R)$ 
    by discrete gauge transformation, and $A^\mathbb{Z}(R)$ is a cross-section.
\end{Thm}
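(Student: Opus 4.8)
The plan is to prove the statement pointwise over an arbitrary $\mathbb{C}$-algebra $R$, reducing everything to explicit manipulation of the discrete difference equation. Writing an element of $M^\mathbb{Z}(R)$ as $m=(m(s))_{s\in\mathbb{Z}}$ with each $m(s)\in U(R)\,\overline{c}\,U(R)$, and an element $u=(u(s))_{s\in\mathbb{Z}}\in U^\mathbb{Z}(R)$, the gauge action sends $m$ to $(u(s)m(s)u(s+1)^{-1})_{s\in\mathbb{Z}}$. The first step is to show that each orbit meets $A^\mathbb{Z}(R)$. Since $A=(U\overline{c})\cap(\overline{c}U^-)$ sits inside $M=U\overline{c}U$, I would fix a point $m$ and try to solve, for the unknown gauge parameter $u$, the system requiring $u(s)m(s)u(s+1)^{-1}\in A(R)$ for all $s$. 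The key local fact I would isolate and prove is a one-step statement: for any $m(s)\in M(R)$ and any prescribed $u(s)\in U(R)$, there is a \emph{unique} $u(s+1)\in U(R)$ making $u(s)m(s)u(s+1)^{-1}$ land in $A(R)$. This is exactly the $(LU)$-cross-section statement of Theorem~\ref{thm:cross-sec loop} transported to the discrete setting, and it should follow from the same underlying fact about $G$ that powers Steinberg's construction: the factorisation $M=U\cdot A$, i.e.\ every element of $U\overline{c}U$ is uniquely $U$-conjugate (in the appropriate one-sided sense) into the affine cell $A$.

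Granting that one-step lemma, the second step assembles it into a global statement by propagation. Starting from, say, $u(0)=1$ (or any chosen value), the lemma determines $u(1)$, then $u(2)$, and so on in the positive direction; running the lemma in the reverse direction determines $u(-1),u(-2),\dots$. This produces a gauge parameter $u\in U^\mathbb{Z}(R)$ carrying $m$ into $A^\mathbb{Z}(R)$, proving that $A^\mathbb{Z}(R)$ meets every orbit. For freeness and the cross-section property simultaneously, I would argue that the gauge parameter is forced: if $u\cdot m$ and $u'\cdot m$ both lie in $A^\mathbb{Z}(R)$, then the one-step uniqueness propagates to show $u(s)^{-1}u'(s)$ is constant in $s$ and centralises the relevant data, and combined with freeness it must be trivial. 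Concretely, freeness says $u\cdot a=a$ for $a\in A^\mathbb{Z}(R)$ forces $u=1$: the relation $u(s)a(s)=a(s)u(s+1)$ together with $a(s)\in A(R)=U(c^{-1})(R)\overline{c}$ gives a recursion $u(s+1)=\overline{c}^{-1}u(c^{-1})(s)^{-1}u(s)a\text{-part}\,\overline{c}$ that, because $c$ acts on $U$ with no nonzero fixed directions over a doubly-infinite index set, admits only the trivial bounded-in-both-directions solution $u\equiv 1$.

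The cleanest way to organise all of this is to exploit the bijection $B(G,c)\simeq G\times A^\mathbb{Z}$ and the band interpretation already developed: an element of $M^\mathbb{Z}(R)$ gauged into $A^\mathbb{Z}(R)$ corresponds, via the difference equation $g(s)=a(s)g(s+1)$, to choosing a band $b=(g(s))_{s\in\mathbb{Z}}$ with $g(s)g(s+1)^{-1}=m(s)$ up to the $U$-action, and the gauge parameter $u$ is precisely the sequence of $U$-parts in a factorisation $g(s)=u(s)\,(\text{element realising }a)$. I would phrase the whole argument in this band language so that existence and uniqueness of the gauge parameter become existence and uniqueness of the band solving the initial-value problem, which is already established in \S\ref{subsec-G-act}.

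The step I expect to be the main obstacle is establishing the one-step cross-section lemma \emph{scheme-theoretically over an arbitrary $R$}, rather than just for $\mathbb{C}$-points. Over a field one may invoke Theorem~\ref{thm:cross-sec loop} or classical Steinberg theory, but the statement here quantifies over all $\mathbb{C}$-algebras $R$, so I must verify that the factorisation $U\times A\xrightarrow{\sim}M$ and its inverse are given by \emph{polynomial} (regular) maps, with the unipotent conjugation solved by honest morphisms of schemes. The delicate point is that solving for $u(s+1)$ involves inverting the action of $U$ on $M\overline{c}^{-1}$, and I need this inverse to be a morphism, i.e.\ I must exhibit the coordinates of $u(s+1)$ as polynomials in those of $m(s)$ and $u(s)$. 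This amounts to checking that the relevant unipotent group multiplication, governed by the Chevalley commutator formula in the root subgroups $U\cap cU^-c^{-1}$ and its complement, is triangular enough to be solved by successive substitution with no denominators — which is where the combinatorics of the Coxeter element $c$ and the sink-source structure of its reduced word do the real work.
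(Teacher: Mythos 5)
There is a genuine gap in the propagation step. Your forward one-step lemma is correct and is exactly Lemma~\ref{lem:normal forms Hum}\,(2) of the paper: given $u(s)$ and $m(s)$, writing $u(s)m(s)=\widetilde{b}\,\overline{c}\,d\,x$ with $\widetilde{b}\in U(c^{-1})$ forces $u(s+1)=dx$ uniquely. But the recursion only runs \emph{forward}. The backward problem --- given $u(0)$ and $m(-1)$, find $u(-1)\in U$ with $u(-1)m(-1)u(0)^{-1}\in A$ --- is not well posed: writing $m(-1)u(0)^{-1}=b\,\overline{c}\,x$ and decomposing $u(-1)b=\widetilde{b}\,\widetilde{d}$ along $U\simeq U(c^{-1})\times(U\cap cUc^{-1})$, the condition to land in $A=U(c^{-1})\overline{c}$ pins down only the factor $\widetilde{d}$ (and is solvable only if $x\in c^{-1}Uc$), while $\widetilde{b}\in U(c^{-1})$ remains completely free. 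So generically there is no backward solution, and when there is one there is an $r$-parameter family. Your plan "running the lemma in the reverse direction determines $u(-1),u(-2),\dots$" therefore breaks down, and with it the existence of a gauge parameter over the doubly-infinite index set $\mathbb{Z}$, which is the actual content of the theorem (the one-sided statement is comparatively easy).

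The paper's proof gets around this with a stabilization argument that your proposal is missing. Proposition~\ref{prop:truncated Gauge} solves the one-sided problem on $\mathbb{Z}_{\geq n}$ with an \emph{arbitrary} initial value $u_\circ$, and shows via Lemma~\ref{lem:groups Hs} that the discrepancy between two such solutions at step $s$ lies in the shrinking subgroup $\bigcap_{0\le k\le s+1}c^{-k}Uc^{k}$, which is trivial once $s\ge\max\{m_i\}$; hence $u(s)$ is independent of $u_\circ$ for $s-n\ge\max\{m_i\}$. Letting $n\to-\infty$ and gluing the stabilized tails produces the unique two-sided gauge parameter, giving existence and uniqueness (hence freeness) simultaneously. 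Incidentally, the obstacle you flag as the main one --- making the one-step factorization work over an arbitrary $\mathbb{C}$-algebra $R$ --- is not where the difficulty lies: the products $H_1\times H_2\to H_3$ of $T$-stable unipotent subgroups are isomorphisms of affine varieties (Lemma~\ref{lem:normal forms Hum}, from \cite[Proposition 28.1]{Hum}), so they hold functorially on $R$-points with no extra work.
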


Theorem~\ref{thm:cross-sec discrete} can be rephrased as follows.

\begin{Cor}\label{cor-34}
The action of $U^\mathbb{Z}$ on $M^\mathbb{Z}$ by discrete gauge transformation induces an isomorphism
of schemes $U^\mathbb{Z} \times A^\mathbb{Z} \to M^\mathbb{Z}$.
\end{Cor}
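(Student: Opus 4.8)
The plan is to prove the equivalent reformulation of Corollary~\ref{cor-34}: that the orbit map
\[
\Phi\colon U^\mathbb{Z}\times A^\mathbb{Z}\longrightarrow M^\mathbb{Z},\qquad (h,a)\longmapsto \bigl(h(s)a(s)h(s+1)^{-1}\bigr)_{s\in\mathbb{Z}},
\]
is an isomorphism of schemes. As all the schemes involved are affine (inverse limits of their finite ``windows''), it suffices to prove that $\Phi$ is bijective on $R$-points functorially in the $\mathbb{C}$-algebra $R$, and to exhibit its inverse as a morphism. Surjectivity of $\Phi$ is exactly the existence part of the cross-section statement, while injectivity of $\Phi$ simultaneously encodes the freeness of the action and the uniqueness of the representative in $A^\mathbb{Z}$; so proving $\Phi$ is an isomorphism yields Theorem~\ref{thm:cross-sec discrete} at once.

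The engine is a single-step isomorphism that I would establish first: the multiplication morphism $A\times U\to M=U\overline{c}\,U$, $(a,u)\mapsto au$, is an isomorphism of schemes. Surjectivity follows from the identity $M=A\cdot U$, obtained from the Coxeter factorisation $U=U(c^{-1})\cdot(U\cap cUc^{-1})$ together with $\overline{c}^{-1}(U\cap cUc^{-1})\overline{c}=(c^{-1}Uc)\cap U\subseteq U$, which lets one absorb the factor $U\cap cUc^{-1}$ on the right. Injectivity follows by writing $a_i=w_i\overline{c}$ with $w_i\in U(c^{-1})$: from $a_1u_1=a_2u_2$ one gets $\overline{c}^{-1}(w_2^{-1}w_1)\overline{c}=u_2u_1^{-1}\in U^-\cap U=\{e\}$. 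That the inverse $\beta\colon g\mapsto(a(g),u(g))$ with $g=a(g)u(g)$ is a morphism over any $R$ can be checked by extracting its two components through generalized minors, which are regular. This provides a canonical, functorial factorisation of every element of $M$.

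With $\beta$ available, the cross-section equation $h(s)^{-1}m(s)h(s+1)\in A$, rewritten as $h(s)^{-1}m(s)=a(s)\,h(s+1)^{-1}$, becomes the assertion $\beta\bigl(h(s)^{-1}m(s)\bigr)=\bigl(a(s),\,h(s+1)^{-1}\bigr)$; hence the datum of $h(s)$ determines both $a(s)\in A$ and $h(s+1)\in U$, and the whole problem is governed by a first-order recursion in $s$. The freeness statement is the companion recursion: the stabiliser of a point $a\in A^\mathbb{Z}$ consists of the $k\in U^\mathbb{Z}$ with $k(s+1)=a(s)^{-1}k(s)a(s)\in U$ for all $s$, and one must show the only such sequence is $k\equiv e$. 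Note that the per-index condition alone only forces $k(s)$ into the subgroup $U\cap cUc^{-1}$, so freeness, like existence, genuinely involves the behaviour of the recursion along the whole line.

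The main obstacle is precisely that these recursions are indexed by all of $\mathbb{Z}$ and carry no boundary condition: propagating ``upward'' in $s$ via $\beta$ is always solvable, whereas the ``downward'' step is constrained because $m(s)h(s+1)$ must lie in $U\overline{c}$, so neither existence nor uniqueness of a bi-infinite solution is formal. I would resolve this by showing that the recursion has \emph{finite memory}. Since $U$ is nilpotent and conjugation by $\overline{c}$ cyclically sweeps the simple root subgroups, the dependence of $h(s)$ on any fixed initial value washes out after a bounded number of steps (of the order of the Coxeter number), so that $h(s)$ — and hence $a(s)$ — is in fact an explicit regular function of finitely many $m(t)$ with $t<s$. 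Concretely I would establish this by factoring $\overline{c}=\overline{s_{i_1}}\cdots\overline{s_{i_r}}$ and peeling off one simple reflection at a time, reducing each elementary gauge-fixing step to a boundary-free rank-one computation of exactly the kind carried out in the loop setting in Example~\ref{exa10}. Assembling the rank-one steps produces, over an arbitrary $R$, an explicit inverse morphism $M^\mathbb{Z}\to U^\mathbb{Z}\times A^\mathbb{Z}$ whose defining formulas can then be verified directly against the gauge equation; this simultaneously gives existence, uniqueness, and freeness, proving both Theorem~\ref{thm:cross-sec discrete} and Corollary~\ref{cor-34}.
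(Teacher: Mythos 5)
Your reformulation of the corollary as the statement that the orbit map $U^\mathbb{Z}\times A^\mathbb{Z}\to M^\mathbb{Z}$ is bijective on $R$-points is exactly how the paper itself reduces Corollary~\ref{cor-34} to Theorem~\ref{thm:cross-sec discrete}, and your ``engine'' --- the isomorphism $A\times U\to M$ --- is the paper's Lemma~\ref{lem:normal forms Hum}\,(2) (since $A=U(c^{-1})\overline{c}$), with a correct proof. You have also correctly identified the real difficulty: the gauge-fixing condition is a first-order recursion indexed by all of $\mathbb{Z}$ with no boundary condition, and everything hinges on showing that the forward recursion has finite memory, i.e.\ that the dependence of $u(s)$ on a chosen initial value $u(n)$ dies out once $s-n$ exceeds a uniform bound. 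This is precisely the content of the paper's Proposition~\ref{prop:truncated Gauge} (the bound being $\max\{m_i\}$), and the gluing of half-line solutions that you sketch at the end is how the paper deduces the theorem from it.

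The gap is that this decisive finite-memory statement is only asserted, with the heuristic ``since $U$ is nilpotent and conjugation by $\overline{c}$ cyclically sweeps the simple root subgroups''. That is not a proof, and the phenomenon is not formal: at each step of the recursion the ambiguity in $u(s+1)$ (the discrepancy between the solutions generated by two different initial values) is a priori only constrained to lie in a certain subgroup of $U$, and one must show that this subgroup shrinks to $\{e\}$ as $s$ grows. The paper does this by exhibiting the chain of $T$-stable subgroups $\bigcap_{0\le k\le s+1}c^{-k}Uc^k$, proving (Lemma~\ref{lem:groups Hs}, via Auslander--Reiten quiver combinatorics) that each admits the complementary factorizations inside $H_{s,c}$ needed to perform step $s+1$, and then showing inductively that the discrepancy $d(s)$ lies in $\bigcap_{0\le k\le s+1}c^{-k}Uc^k$, which is trivial for $s\ge\max\{m_i\}$ because no positive root stays positive under all powers of $c^{-1}$. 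Your proposed alternative --- peeling off one simple reflection of $\overline{c}$ at a time and assembling rank-one computations --- is not carried out (Example~\ref{exa10}, which you cite as a model, contains no such gauge-fixing computation), and it is not clear it would produce the required uniform bound without reconstructing exactly this subgroup analysis: the issue is not solving each elementary step but controlling how the unconstrained part of the solution propagates and eventually vanishes. Until that lemma is supplied, neither existence nor uniqueness of the bi-infinite solution, hence neither surjectivity nor injectivity of the orbit map, is established.
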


\begin{proof}
Theorem~\ref{thm:cross-sec discrete} is equivalent to the fact that, for any $\mathbb{C}$-algebra $R$, the 
map $U^\mathbb{Z}(R) \times A^\mathbb{Z}(R) \to M^\mathbb{Z}(R)$ sending $(u,a)$ to $u\cdot a$ is a bijection.
\end{proof}

The proof of Theorem~\ref{thm:cross-sec discrete} requires some preparation. 
We start by recalling a well known result on algebraic groups.
Let us consider an algebraic subgroup $H$ of $U$ which is stable under the conjugation action of $T$.
Then the group $H$ is uniquely determined by the set $\Phi(H)$ of weights of the Lie algebra of $H$.
The following result is well known, and can be deduced for instance from \cite[Proposition 28.1]{Hum}.

\begin{Lem}
    \label{lem:normal forms Hum} The following statements hold.
    \begin{enumerate}
        \item Let $H_1,H_2$ and $H_3$ be $T$-stable subgroups of $U$ such that $\Phi(H_1) \cup \Phi(H_2)= \Phi(H_3)$ and $\Phi(H_1) \cap \Phi(H_2)= \emptyset.$ 
        Then the product of $G$ induces an isomorphism of algebraic varieties $H_1 \times H_2 \to H_3$.
    \item The morphism $U(c^{-1}) \times U \to M$     defined by $(b,d) \longmapsto b \bar{c}d \ (b \in U(c^{-1}), \  d \in U)$ is an isomorphism.
    \end{enumerate}
\end{Lem}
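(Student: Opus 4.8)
The plan is to prove the two parts in order, deducing (2) from (1) together with the Bruhat decomposition. For part (1), I would invoke the standard structure theory of $T$-stable subgroups of $U$: such a subgroup $H$ is connected, its Lie algebra is $\bigoplus_{\alpha\in\Phi(H)}\mathfrak{g}_\alpha$ for a closed subset $\Phi(H)\subseteq\Phi^+$, and for any fixed ordering $\alpha_1,\dots,\alpha_m$ of $\Phi(H)$ the product map $U_{\alpha_1}\times\cdots\times U_{\alpha_m}\to H$ is an isomorphism of varieties (this is exactly \cite[Proposition 28.1]{Hum}). First I would choose an ordering of $\Phi(H_3)=\Phi(H_1)\sqcup\Phi(H_2)$ that lists the roots of $\Phi(H_1)$ before those of $\Phi(H_2)$. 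Applying the structure theorem to $H_1$, $H_2$, $H_3$ with the induced orderings identifies each $H_i$ with the ordered product of its root subgroups; under these identifications the multiplication map $H_1\times H_2\to H_3$ becomes the identity of $\prod_{\gamma\in\Phi(H_3)}U_\gamma$. Since $\Phi(H_1),\Phi(H_2)\subseteq\Phi(H_3)$ forces $H_1,H_2\subseteq H_3$, the map is well defined and is an isomorphism.

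For part (2), I would first reduce $M=U\overline{c}U$ to the form $U(c^{-1})\overline{c}U$. The positive roots $\Phi^+=\Phi(U)$ decompose as the disjoint union of $\{\alpha\in\Phi^+:c^{-1}\alpha<0\}=\Phi(U(c^{-1}))$ and $\{\alpha\in\Phi^+:c^{-1}\alpha>0\}=\Phi(U')$, where $U':=U\cap cUc^{-1}$ is $T$-stable. Part (1) then gives that the product map $U(c^{-1})\times U'\to U$ is an isomorphism. Because $U'\subseteq cUc^{-1}=\overline{c}\,U\,\overline{c}^{-1}$, we have $\overline{c}^{-1}U'\overline{c}\subseteq U$, whence $U'\overline{c}U=\overline{c}U$ and therefore $M=U(c^{-1})U'\overline{c}U=U(c^{-1})\overline{c}U$; this shows the map $(b,d)\mapsto b\overline{c}d$ is surjective. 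Injectivity is the usual Bruhat uniqueness: from $b\overline{c}d=b'\overline{c}d'$ one gets $\overline{c}^{-1}(b'^{-1}b)\overline{c}=d'd^{-1}$, whose left side lies in $U^-$ (since $b'^{-1}b\in U(c^{-1})\subseteq cU^-c^{-1}$) and whose right side lies in $U$; as $U\cap U^-=\{e\}$, both sides are trivial, so $b=b'$ and $d=d'$.

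The one genuinely non-formal point, which I expect to be the main obstacle, is upgrading this bijective morphism to an isomorphism of varieties, since bijectivity alone is insufficient. To handle it I would construct an explicit inverse morphism from big-cell coordinates. The product map $U^-\times T\times U\to G$ is an open immersion onto the big cell, so on its image the projections to $U^-$, $T$, $U$ are morphisms. For $m=b\overline{c}d\in M$ one computes $\overline{c}^{-1}m=(\overline{c}^{-1}b\overline{c})\,d$ with $\overline{c}^{-1}b\overline{c}\in U^-$ and $d\in U$, so $\overline{c}^{-1}m$ lies in $U^-U$ with trivial torus part; composing $m\mapsto\overline{c}^{-1}m$ with the $U^-$- and $U$-projections recovers $\overline{c}^{-1}b\overline{c}$ and $d$ as regular functions of $m$, and conjugating back by $\overline{c}$ recovers $b$. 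These assemble into a morphism $M\to U(c^{-1})\times U$ inverse to $(b,d)\mapsto b\overline{c}d$, which completes the proof; every remaining step is formal bookkeeping with root subgroups and set-theoretic Bruhat uniqueness.
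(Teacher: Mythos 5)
Your proof is correct, and it is precisely the deduction from \cite[Proposition 28.1]{Hum} that the paper invokes: the paper itself gives no argument for this lemma beyond the citation, and your write-up supplies exactly the intended details (ordered products of root subgroups for part (1), the decomposition $U\simeq U(c^{-1})\times(U\cap cUc^{-1})$ plus Bruhat-type uniqueness for part (2)). Your extra care in constructing an explicit inverse morphism via the big cell, rather than stopping at bijectivity, is a sound way to settle the variety-isomorphism point and is consistent with what the cited reference provides.
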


\begin{Lem}
\label{lem:groups Hs} For every $s \in \mathbb{Z}_{\geq 0}$, the following hold.
\begin{enumerate}
    \item  There exists a $T$-stable algebraic subgroup $H_{s,c}$ of $U$ such that 
    \[
    \Phi(H_{s,c})= \bigl( \Phi^+ \cap c\Phi^-) \cup c\biggl( \bigcap_{0 \leq k \leq s+1} c^{-k} \Phi^+ \biggr).
    \]
     \item The morphisms given by multiplication 
    \[
    U(c^{-1}) \times c \biggl( \bigcap_{0 \leq k \leq s+1} c^{-k}U c^k \biggr) c^{-1} \to H_{s,c} \leftarrow c \biggl( \bigcap_{0 \leq k \leq s+1} c^{-k}U c^k \biggr) c^{-1} \times U(c^{-1})
    \]
    are isomorphisms.
    \item The image of the product map
    \[
  \biggl( \bigcap_{0 \leq k \leq s+1} c^{-k}Uc^k \biggr) \times  U(c^{-1}) \to U
    \]
    is contained in $H_{s+1,c}$.
\end{enumerate}
\end{Lem}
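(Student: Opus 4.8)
The plan is to reduce the whole lemma to its first part, and to reduce that, via the correspondence between $T$-stable subgroups of $U$ and closed subsets of $\Phi^+$, to a closedness statement in the root system. Write $\Phi$ for the root system of $G$, $\Phi^\pm$ for the positive and negative roots determined by $B$, and set $S_s:=\bigcap_{0\le k\le s+1}c^{-k}\Phi^+$, so that $V_s:=\bigcap_{0\le k\le s+1}c^{-k}Uc^{k}$ is the $T$-stable subgroup of $U$ with weight set $S_s$ and $cV_sc^{-1}$ has weight set $cS_s\subseteq\Phi^+$. By \cite[Proposition 28.1]{Hum} a subset of $\Phi^+$ is the set of weights of a $T$-stable subgroup of $U$ exactly when it is closed under root addition, so part~(1) is the assertion that
\[
\Psi:=(\Phi^+\cap c\Phi^-)\cup cS_s
\]
is closed. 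First I would record the combinatorial input that, for a Coxeter element, the set of $k$ with $c^{k}\delta\in\Phi^+$ is, for each fixed root $\delta$, a cyclic interval; this contiguity lets one rewrite $\Psi=\{\alpha\in\Phi^+\mid c^{-1}\alpha\in\Phi^-\text{ or }c^{s}\alpha\in\Phi^+\}$, whose complement in $\Phi^+$ is $C=\{\alpha\in\Phi^+\mid c^{-1}\alpha\in\Phi^+\text{ and }c^{s}\alpha\in\Phi^-\}$.

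To prove $\Psi$ closed I would show that $C$ is coclosed: if $\gamma=\alpha+\beta\in C$ with $\alpha,\beta\in\Phi^+$, then $\alpha\in C$ or $\beta\in C$. The elementary observation, applied after acting by $c^{k}$, is that a sum of two positive roots which is again a root is positive; hence $c^{k}\gamma\in\Phi^+$ whenever $c^{k}\alpha,c^{k}\beta\in\Phi^+$. Taking $k=s$ and using $c^{s}\gamma\in\Phi^-$ forces, say, $c^{s}\alpha\in\Phi^-$. If moreover $c^{-1}\alpha\in\Phi^+$ then $\alpha\in C$ and we are done, so the delicate case is $c^{-1}\alpha\in\Phi^-$. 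There, from $c^{-1}\gamma\in\Phi^+$ and $c^{-1}\alpha\in\Phi^-$ one gets $c^{-1}\beta=c^{-1}\gamma+(-c^{-1}\alpha)\in\Phi^+$, so only $c^{s}\beta\in\Phi^-$ remains to be shown in order to conclude $\beta\in C$.

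Ruling out $c^{s}\beta\in\Phi^+$ in this last configuration is the main obstacle, and it is exactly where the geometry of the Coxeter element enters. I would invoke the standard description of $\Phi^+$ through the Coxeter plane: $c$ acts on a distinguished plane as rotation by $2\pi/h$, every root has nonzero projection, and $\delta\in\Phi^+$ precisely when this projection lies in a fixed open half-plane, no root meeting its boundary. Writing $\phi(\delta)\in(0,\pi)$ for the argument of the projection of a positive root and using $\phi(c^{k}\delta)=\phi(\delta)+2\pi k/h$, the hypotheses $c^{-1}\alpha\in\Phi^-$ and $c^{s}\alpha\in\Phi^-$ become $\pi-2\pi s/h<\phi(\alpha)<2\pi/h$, whence $(s+1)\,2\pi/h>\pi$; on the other hand $c^{-1}\beta\in\Phi^+$ together with the putative $c^{s}\beta\in\Phi^+$ would give $2\pi/h<\phi(\beta)<\pi-2\pi s/h$, whence $(s+1)\,2\pi/h<\pi$. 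These contradict, so $c^{s}\beta\in\Phi^-$, $C$ is coclosed, and part~(1) follows.

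Parts~(2) and~(3) are then formal. For~(2), the weight sets $\Phi^+\cap c\Phi^-$ and $cS_s$ are disjoint, since an element of the first satisfies $c^{-1}\alpha\in\Phi^-$ and an element of the second satisfies $c^{-1}\alpha\in\Phi^+$, while their union is $\Phi(H_{s,c})$ by definition; Lemma~\ref{lem:normal forms Hum}(1), used with the two factors $U(c^{-1})$ and $cV_sc^{-1}$ in either order, then yields both multiplication isomorphisms. For~(3), the image of the product map lies in the subgroup generated by $V_s$ and $U(c^{-1})$, whose weight set is the closure of $S_s\cup(\Phi^+\cap c\Phi^-)$; since $H_{s+1,c}$ is a subgroup by~(1) its weight set is closed, so it is enough to check the inclusion $S_s\cup(\Phi^+\cap c\Phi^-)\subseteq(\Phi^+\cap c\Phi^-)\cup cS_{s+1}=\Phi(H_{s+1,c})$. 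This is clear for the first summand, and for $\beta\in S_s$ one notes that $c^{-1}\beta\in\Phi^-$ places $\beta$ in $\Phi^+\cap c\Phi^-$, while $c^{-1}\beta\in\Phi^+$ gives $c^{j}\beta\in\Phi^+$ for $-1\le j\le s+1$, i.e.\ $\beta\in cS_{s+1}$; hence the inclusion holds and~(3) follows.
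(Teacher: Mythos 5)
Your reduction of part (1) to the closedness of a subset of $\Phi^+$, and then to Coxeter-plane geometry, is a genuinely different route from the paper, which instead identifies $\Phi_{s,c}$ with a set of dimension vectors in the Auslander--Reiten quiver and invokes the Geiss--Leclerc--Schr\"oer characterization of the sets $\Phi_w$; parts (2) and (3) you handle essentially as the paper does, via Lemma~\ref{lem:normal forms Hum}(1). However, part (1) of your argument has a real gap: both the rewriting
\[
\Psi=\{\alpha\in\Phi^+\mid c^{-1}\alpha\in\Phi^-\ \text{or}\ c^{s}\alpha\in\Phi^+\}
\]
and the final angle inequalities silently assume there is no wrap-around of the $c$-orbit within the window $-1\le j\le s$, i.e.\ roughly that $(s+1)\cdot 2\pi/h<\pi$. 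The set $\{k \bmod h: c^k\delta\in\Phi^+\}$ is a cyclic interval of length about $h/2$, so it can contain $-1$, $0$ and $s$ without containing all of $[-1,s]$ once $s$ is comparable to $h$. Concretely, in type $A_2$ with $c=s_1s_2$ (so $h=3$) and $s=2$ one has $c^2=c^{-1}$, so your right-hand side is all of $\Phi^+$, whereas $\bigcap_{0\le k\le 3}c^{-k}\Phi^+=\emptyset$ and the correct $\Psi$ is $\Phi^+\cap c\Phi^-=\{\alpha_1,\alpha_1+\alpha_2\}$: the root $\alpha_2$ satisfies $c^{-1}\alpha_2=\alpha_1\in\Phi^+$ and $c^2\alpha_2\in\Phi^+$ but $c\alpha_2\in\Phi^-$. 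Likewise, in the ``delicate case'' the putative $c^s\beta\in\Phi^+$ only forces $\phi(\beta)<\pi-2\pi s/h$ if $\phi(\beta)+2\pi s/h$ has not passed $2\pi$; for $s>h/2$ it may have, and the contradiction evaporates. Since the lemma is asserted (and used in the induction of Proposition~\ref{prop:truncated Gauge}) for every $s\ge 0$, the proof as written establishes a false identity in that range.

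The gap is fixable with a short case split, and with it your argument becomes a clean alternative to the paper's. If $(s+1)\cdot 2\pi/h\ge\pi$ then the $s+2$ equally spaced projections of $c^{-1}\alpha,\alpha,\dots,c^{s}\alpha$ cannot all lie in an open half-plane, so $\bigcap_{0\le k\le s+1}c^{-k}\Phi^+=\emptyset$, $\Psi=\Phi^+\cap c\Phi^-=\Phi(U(c^{-1}))$, and (1) holds with $H_{s,c}=U(c^{-1})$. In the complementary range $(s+1)\cdot 2\pi/h<\pi$ there is no wrap-around, your rewriting of $\Psi$ and all the angle estimates are valid, and the coclosedness argument goes through. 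You should also state explicitly that you are using the correspondence under which containment of weight sets implies containment of the $T$-stable subgroups (needed in your part (3)), but that is part of the same \cite[Proposition 28.1]{Hum} package the paper already invokes.
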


\begin{proof}

To prove (1), it is convenient to use the combinatorics of Auslander-Reiten quivers 
as we did in \cite[\S 5.2]{FL}. We will use the same notation as in \cite{FL}. 
In particular $Q$ denotes the Dynkin quiver associated with $c$, and $\mathcal{G}_Q$ its Auslander-Reiten quiver. The vertex set of $\mathcal{G}_Q$ consists of all
isoclasses of indecomposable representations of~$Q$. It is in bijection with $\Phi^+$ by taking dimension vectors. Let $\tau$ be the Auslander-Reiten translation. If $x \in \mathrm{Rep}(Q)$ has dimension
vector $\beta$ then $\tau(x)$ has dimension vector~$c(\beta)$.
Recall that the $i$-th row of $\mathcal{G}_Q$ (the one containing the indecomposable injective representation $I_i$) has length $m_i$.

Following \cite[\S 2.2]{GLS} we say that a multiplicity free direct sum $M$ of indecomposable representations of $Q$ containing 
all the $I_i$'s is a \emph{terminal module} if the subset of vertices of the graph $\mathcal{G}_Q$ corresponding to its indecomposable direct summands is closed under successor. By \cite[\S 3.7]{GLS}, if $M$ is a terminal representation of $Q$,
then there exists a unique $w\in W$ such that the set of dimension vectors of the irreducible
summands of $M$ is equal to 
\[
 \Phi_w := \{\alpha \in \Phi^+ \mid w(\alpha) \in \Phi^-\} = \Phi(U \cap w^{-1} U^- w).
\]
Put 
\[
    \Phi_{s,c}:= ( \Phi^+ \cap c\Phi^-) \cup c\biggl( \bigcap_{0 \leq k \leq s+1} c^{-k} \Phi^+ \biggr),\qquad (s\ge 0).
\]
Then $\Phi_{s,c}$ contains the subset $\Phi^+ \cap c\Phi^-$ of dimension vectors of the indecomposable
injective representations $I_i$ of $Q$. We have 
\[
\Phi_{0,c} = ( \Phi^+ \cap c\Phi^-) \cup (c\Phi^+ \cap \Phi^+) = \Phi^+.
\]
Then 
\[
\Phi_{1,c} = ( \Phi^+ \cap c\Phi^-) \cup (c\Phi^+ \cap \Phi^+ \cap c^{-1}\Phi^+)
\]
is the subset of $\Phi^+$ obtained by removing the leftmost vertex of $\mathcal{G}_Q$ on 
every row $i$ such that $m_i > 1$. Similarly, $\Phi_{s,c}$ is the subset of $\Phi^+$
obtained by removing on every row $i$ of $\mathcal{G}_Q$ the $\min(s,m_i-1)$ leftmost vertices.
Hence if $s \ge \max(m_i\mid 1\le i \le r)$, then we have $\Phi_{s,c} = \Phi^+ \cap c\Phi^-$.
Finally it is clear that for every $s$ the direct sum $M_s$ of indecomposable representations
with dimension vectors in $\Phi_{s,c}$ is a terminal module. This follows 
from the well-known property 
\[
\mathrm{Hom}(\tau^{-k}(P_i), \tau^{-l}(P_j)) = 0, \qquad (k>l\ge0,\ 1\le i,j \le r),
\]
where the $P_i$'s are the indecomposable projective representations.
Indeed, this implies that every arrow of $\mathcal{G}_Q$ starting in a vertex corresponding to an element of $\Phi_{s,c}$
ends in a vertex corresponding to another element of $\Phi_{s,c}$. Hence $\Phi_{s,c}$ is closed under successor. Therefore, there exists $w_{s,c}\in W$ such that
\[
 \Phi_{s,c} = \Phi_{w_{s,c}} = \Phi(H_{s,c}),
\]
where $H_{s,c} = U \cap w_{s,c}^{-1} U^- w_{s,c}$. This proves (1).

Statement (2) is an immediate consequence of Lemma \ref{lem:normal forms Hum}\,(1).
Finally, let $d$ and $b$ be respectively elements of the groups 
$\bigcap_{0 \leq k \leq s+1} c^{-k}Uc^k$ and $U(c^{-1})$.
Lemma \ref{lem:normal forms Hum}\,(1) implies that $U\simeq (U\cap cUc^{-1}) \times (U\cap cU^-c^{-1})$,
so 
\[
\bigcap_{0 \leq k \leq s+1} c^{-k}Uc^k \simeq 
\left(cUc^{-1} \cap \bigcap_{0 \leq k \leq s+1} c^{-k}Uc^k\right)
\times \left(cU^-c^{-1} \cap \bigcap_{0 \leq k \leq s+1} c^{-k}Uc^k\right).
\]
Hence there exist unique elements
\[
d_1 \in cUc^{-1} \cap \bigcap_{0 \leq k \leq s+1} c^{-k}Uc^k, \qquad d_2 \in cU^-c^{-1} \cap \bigcap_{0 \leq k \leq s+1} c^{-k}Uc^k
\]
such that $d=d_1d_2$. 
Then we have that $db=d_1(d_2b)$ and $d_2b \in U(c^{-1})$. 
Hence, statement (3) follows from statement (2).
\end{proof}

The proof of Theorem \ref{thm:cross-sec discrete} requires a limit argument, for which we need to introduce some truncated analogues of the discrete gauge action. 
In particular, for any integer $n \in \mathbb{Z}$, we let $U^{\mathbb{Z}_{\geq n}}$ act on $M^{\mathbb{Z}_{\geq n}}$ by means of the formulas of Equation (\ref{eq:disc Gauge}). 

\begin{Prop}
\label{prop:truncated Gauge}
   Let $R$ be a $\mathbb{C}$-algebra. Fix $m \in M^{\mathbb{Z}_{\geq 0}}(R)$ and $u_\circ \in U(R).$ Then there exists a unique $u \in U^{\mathbb{Z}_{\geq 0}}(R)$ such that $u \cdot m$ belongs to $A^{\mathbb{Z}_{\geq 0}}(R)$ and $u(0)=u_\circ$.
Moreover, the component $u(s)$ of $u$ does not depend on $u_\circ$ whenever $s \geq  \max \{ m_i \mid 1 \leq i \leq r\}.$
\end{Prop}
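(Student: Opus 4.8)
The plan is to get existence and uniqueness from a forward recursion, and then to deduce the stabilization from the $T$-stable subgroup decompositions of Lemmas~\ref{lem:normal forms Hum} and~\ref{lem:groups Hs}. For existence and uniqueness, recall that by Lemma~\ref{lem:normal forms Hum}(2) the multiplication map $U(c^{-1})\times U\to M$, $(b,d)\mapsto b\overline{c}d$, is an isomorphism of schemes; writing $A=U(c^{-1})\overline{c}$, this says that every element of $M$ has a unique factorisation $a\,d$ with $a\in A$ and $d\in U$, and the same holds on $R$-points for any $\mathbb{C}$-algebra $R$. Given $u(s)\in U(R)$, the product $u(s)m(s)$ lies in $M(R)$ and thus factors uniquely as $a(s)\,u(s+1)$ with $a(s)\in A(R)$ and $u(s+1)\in U(R)$, and the condition $u(s)m(s)u(s+1)^{-1}\in A(R)$ holds exactly when $u(s+1)$ is this $U$-factor. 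Hence $u(0)=u_\circ$ determines the whole sequence $u=(u(s))_{s\ge 0}$ uniquely, and $u\cdot m\in A^{\mathbb{Z}_{\ge 0}}(R)$.

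For the stabilization I would compare two solutions $u,u'$ attached to the same $m$ but with initial values $u_\circ,u_\circ'$, and track $\nu_s:=u(s)u'(s)^{-1}$. Factoring $m(s)=\beta_s\overline{c}\,\omega_s$ with $\beta_s\in U(c^{-1})$, $\omega_s\in U$, and factoring $u(s)\beta_s=b_sp_s$ along $U=U(c^{-1})\cdot(U\cap\overline{c}\,U\,\overline{c}^{-1})$ from Lemma~\ref{lem:normal forms Hum}(1), a short computation gives $a(s)=b_s\overline{c}$ and $u(s+1)=(\overline{c}^{-1}p_s\overline{c})\,\omega_s$. Since $\omega_s$ depends only on $m$, this yields the clean recursion $\nu_{s+1}=\overline{c}^{-1}\eta_s\overline{c}$ with $\eta_s=p_sp_s'^{-1}=b_s^{-1}\nu_s b_s'$, an element which by construction lies in $U\cap\overline{c}\,U\,\overline{c}^{-1}$. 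I would then prove by induction on $s$ the invariant $\nu_s\in D_{s-1}$, where $D_j:=\bigcap_{0\le k\le j+1}\overline{c}^{-k}U\overline{c}^{k}$ are the groups appearing in Lemma~\ref{lem:groups Hs} (so that $D_{-1}=U$ supplies the base case).

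The inductive step is the heart of the matter. Assuming $\nu_s\in D_{s-1}$, Lemma~\ref{lem:groups Hs}(3) gives $\nu_s b_s'\in H_{s,c}$ (the case $s=0$ being trivial since $H_{0,c}=U$), and left multiplication by $b_s^{-1}\in U(c^{-1})\subseteq H_{s,c}$ keeps $\eta_s\in H_{s,c}$. By Lemma~\ref{lem:groups Hs}(2) one has $H_{s,c}=U(c^{-1})\cdot\overline{c}D_s\overline{c}^{-1}$ with $\overline{c}D_s\overline{c}^{-1}\subseteq U\cap\overline{c}\,U\,\overline{c}^{-1}$; since $\eta_s$ already lies in $U\cap\overline{c}\,U\,\overline{c}^{-1}$ and this subgroup meets $U(c^{-1})$ trivially by the uniqueness in Lemma~\ref{lem:normal forms Hum}(1), the $U(c^{-1})$-component of $\eta_s$ must vanish. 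Thus $\eta_s\in\overline{c}D_s\overline{c}^{-1}$ and $\nu_{s+1}=\overline{c}^{-1}\eta_s\overline{c}\in D_s$, closing the induction. Finally, the description of $\Phi(D_{s-1})=\bigcap_{0\le k\le s}c^{-k}\Phi^+$ in the proof of Lemma~\ref{lem:groups Hs}(1) (erasing $\min(s,m_i-1)$ leftmost vertices on each row of the Auslander--Reiten quiver) shows that $D_{s-1}$ is trivial as soon as $s\ge\max_i m_i$; hence $\nu_s=e$, i.e.\ $u(s)=u'(s)$, for all such $s$, which is exactly the asserted independence of $u_\circ$.

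I expect the main obstacle to be this inductive step: the passage $\nu_s\mapsto\nu_{s+1}$ is neither linear nor a group homomorphism, so the real work lies in guessing the correct shrinking subgroups $D_{s-1}$ and in recognising that the a priori membership $\eta_s\in U\cap\overline{c}\,U\,\overline{c}^{-1}$ forces the cancellation of the $U(c^{-1})$-part. It is precisely here that Lemma~\ref{lem:groups Hs} is tailored to collapse the noncommutative bookkeeping into a clean statement about $T$-stable subgroups. A secondary point requiring care is that every factorisation must be read functorially in $R$, which is legitimate because all the maps used are isomorphisms of schemes.
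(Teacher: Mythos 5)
Your proof is correct and follows essentially the same route as the paper's: a forward recursion through the unique factorization $M\simeq A\times U$ of Lemma~\ref{lem:normal forms Hum}\,(2) for existence and uniqueness, and an induction via Lemma~\ref{lem:groups Hs} showing that the ambiguity lies in the shrinking subgroups $\bigcap_{0\le k\le s+1}c^{-k}Uc^{k}$, which become trivial once $s\ge \max_i m_i-1$. The only cosmetic difference is that the paper tracks the solution relative to the reference solution with $u(0)=e$, writing $u(s+1)=d(s)x(s)$ with $d(s)$ in the shrinking subgroup, whereas you compare two arbitrary solutions via $\nu_s=u(s)u'(s)^{-1}$ (and consequently need the small extra observation that the $U(c^{-1})$-component of $\eta_s$ vanishes); the two bookkeeping schemes are equivalent.
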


\begin{proof}
As the $\mathbb{C}$-algebra $R$ is fixed, we will abuse notation and simply write $x \in X$ to denote that an element $x$ belongs to $X(R)$ for some complex scheme $X$.
Because of Lemma \ref{lem:normal forms Hum}\,(2), we can write 
\[
m(0)= b(0) \bar{c}x(0)
\]
for some unique $b(0) \in U(c^{-1})$ and $x(0) \in U$. Then we can iteratively write
\[
x(s-1) m(s)=b(s) \bar{c}x(s) \qquad (1\leq s),
\]
for some unique $b(s) \in U(c^{-1})$ and $x(s) \in U$.
We claim that for every $n \in \mathbb{Z}_{\geq 1 }$ there exist elements $d(s) \in U \ (0 \leq s \leq n)$ satisfying the following properties:
\begin{enumerate}
    \item  Let $(u(t))_{0 \leq t \leq n} \in U^{n+1}$ be a sequence verifying $u(0)=u_\circ$. Then the points  $u(s)m(s)u(s+1)^{-1}$ belong to $A$ for $0 \leq s \leq n-1$ if and only if 
    \[
    u(s+1)= d(s) x(s) \qquad(0 \leq s \leq n-1).
    \]
    \item The element $d(s)$ belongs to the group
    \[
\bigcap_{0 \leq k \leq s+1} c^{-k} U c^k.
    \]
\end{enumerate}
The previous claim immediately implies the proposition. 
Indeed, it is clear that for every $n \in \mathbb{Z}_{\geq 1}$ the sequence $(d(s))_{0 \leq s \leq n}$ of the claim is unique. 
Hence, by letting $n$ tend to infinity, we get a well defined sequence $(d(s))_{s \in \mathbb{Z}_{\geq 0}} \in U^{\mathbb{Z}_{\geq 0}}$.
Let $u \in U^{\mathbb{Z}_{\geq 0}}$ such that $u(0)=u_\circ$. 
Then we have that $u \cdot m \in A^\mathbb{Z}$ if and only if $u(s+1)=d(s)x(s)$ for every $s \geq 0$. 
Moreover, the second part of the claim implies that the element $d(s)$ is the identity whenever $s \geq \max \{m_i \mid 1\le i \le r\}$. 
As the elements $x(t) \ (t \ge 0) $ only depend on $m$, the second statement of the proposition follows.

Let us now prove the claim by induction on $n$.
Assume that $n=1$. Let $(u(0), u(1)) \in U^2$ such that $u(0)=u_\circ.$ 
Lemma \ref{lem:normal forms Hum}\,(1) implies that $U\simeq U(c^{-1}) \times (U\cap cUc^{-1})$,
so there exist unique elements $\widetilde{b(0)} \in U(c^{-1})$ and $\widetilde{d(0)} \in U \cap cUc^{-1}$
such that
\[
u(0)b(0)=\widetilde{b(0)} \, \widetilde{d(0)}.
\]
Let us set
\[
d(0):= \overline{c}^{-1}  \widetilde{d(0)} \overline c \in U \cap c^{-1}Uc.
\]
Then we have that 
\[
\begin{array}{rl}
     u(0)m(0)= & u(0) b(0) \bar{c}x(0) \\[0.3em]
     = & \widetilde{b(0)} \overline{c} d(0) x(0).
\end{array}
\]
Then Lemma \ref{lem:normal forms Hum}\,(2) implies that the equation
\[
u(0)m(0)=a(0)u(1)
\]
admits a solution $a(0) \in A$ if and only if 
\[
u(1)= d(0)x(0).
\]
Hence, we have proved the claim for $n=0$.

Assume now that $n\geq 1$ and that the sequence $(u(t))_{0 \leq t \leq n+1}$ satisfies $u(0)=u_\circ$ and $u(s)m(s)u(s+1)^{-1} \in A$ for $0 \leq s \leq n-1.$
By inductive hypothesis, there exist elements $d(s) \, (1 \leq s \leq n-1)$ such that 
\[
u(s+1)= d(s) x(s), \qquad d(s) \in \bigcap_{0 \leq k \leq s+1}c^{-k}Uc^k.
\]
By Lemma \ref{lem:groups Hs}, we can write 
\[
d(n-1)b(n)= \widetilde{b(n)} \cdot \widetilde{d(n)}
\]
for some unique 
\[
\widetilde{b(n)} \in U(c^{-1}), \qquad \widetilde{d(n)} \in c \biggl(\bigcap_{0 \leq k \leq n+1} c^{-k}Uc^k \biggr)c^{-1}.
\]
Let us set
\[
d(n):= \overline{c}^{-1}\widetilde{d(n)} \overline{c} \in \bigcap_{0 \leq k \leq n+1} c^{-k}Uc^k.
\]
Then we have that 
\[
\begin{array}{rl}
     u(n)m(n)= & d(n-1) b(n) \bar{c}x(n) \\[0.3em]
     = & \widetilde{b(n)} \overline{c} d(n) x(n).
\end{array}
\]
Then Lemma \ref{lem:normal forms Hum} implies that the equation
\[
u(n)m(n)=a(n)u(n+1)
\]
admits a solution $a(n) \in A$ if and only if 
\[
u(n+1)= d(n)x(n).
\]
This completes the proof of the claim, and of the proposition.
\end{proof}

\medskip\noindent
\emph{Proof of Theorem \ref{thm:cross-sec discrete}}.
Let $m \in M^{\mathbb{Z}}(R)$, and let
\[
m_{\geq n}:=(m(s))_{s \in \mathbb{Z}_{\geq n}} \in M^{\mathbb{Z}_{\geq n}}(R) \qquad (n \in \mathbb{Z}).
\]
By Proposition \ref{prop:truncated Gauge}, there exist elements $u^{(n)} \in U^{\mathbb{Z}_{\geq n}}(R)$ such that $u^{(n)} \cdot m_{\geq n} \in A^{\mathbb{Z}_{\geq n}}(R).$
Let us consider two integers $n_1 \leq n_2$, and set 
\[
u^{(n_1)}_{\geq n_2}:= (u^{(n_1)}(s))_{s \in \mathbb{Z}_{\geq n_2}} \in U^{\mathbb{Z}_{\geq n_2}}(R).
\]
As $u^{(n_1)}_{\geq n_2} \cdot m_{\geq n_2} \in A^{\mathbb{Z}_{\geq n_2}}(R)$, Proposition \ref{prop:truncated Gauge} implies that 
\[
u^{(n_1)}(s)= u^{(n_2)}(s) \qquad \mbox{whenever} \quad s -n_2 \geq \max \{ m_i \mid 1\le i \le r\}.
\]
Hence, the element $u \in U^\mathbb{Z}(R)$ whose components are
\[
u(s):=u^{(n)}(s) \qquad (s,n \in \mathbb{Z}, \ s-n \geq \max \{ m_i \mid 1\le i \le r\}),
\]
is well defined and satisfies $u \cdot m \in A^{\mathbb{Z}}(R).$
Then assume that $v \in U^\mathbb{Z}(R)$ satisfies $v \cdot m \in A^{\mathbb{Z}}(R).$ 
With obvious notation, we have that $v_{\geq n} \cdot m_{\geq n} \in A^{\mathbb{Z}_{\geq n}}(R)$ for every $n \in \mathbb{Z}$. 
Hence, Proposition \ref{prop:truncated Gauge} implies that $v=u.$
\qed

\begin{Cor}
\label{cor:P surjective}
The morphism $P: B(G,c)^\circ \to (L^{c,e})^\mathbb{Z}$ is surjective at the level of $R$-points, for every $\mathbb{C}$-algebra $R$.
\end{Cor}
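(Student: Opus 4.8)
The plan is to invert the construction defining $P$, using the cross-section Theorem~\ref{thm:cross-sec discrete} as the essential input. Fix a $\mathbb{C}$-algebra $R$ and a point $(l(s))_{s\in\mathbb{Z}}\in (L^{c,e})^\mathbb{Z}(R)$. Since $L^{c,e}=M\cap B^-\subseteq M$, this is in particular a point of $M^\mathbb{Z}(R)$, so by Theorem~\ref{thm:cross-sec discrete} there is a unique $u=(u(s))_{s\in\mathbb{Z}}\in U^\mathbb{Z}(R)$ with $u\cdot(l(s))_{s\in\mathbb{Z}}\in A^\mathbb{Z}(R)$. I set $a(s):=u(s)\,l(s)\,u(s+1)^{-1}\in A(R)$, so that $l(s)=u(s)^{-1}a(s)u(s+1)$.

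Next I would \emph{integrate} the sequence $(l(s))$ inside the group $B^-$. As each $l(s)$ lies in $L^{c,e}\subseteq B^-$, I can define $(b(s))_{s\in\mathbb{Z}}$ in $B^-(R)$ by putting $b(0):=e$ and recursively $b(s+1):=l(s)^{-1}b(s)$ and $b(s-1):=l(s-1)\,b(s)$; then $b(s)b(s+1)^{-1}=l(s)$ for all $s$ by construction, and all $b(s)$ remain in $B^-(R)$ because $B^-$ is a subgroup.

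I then set $g(s):=u(s)b(s)$. As $u(s)\in U$ and $b(s)\in B^-$, each $g(s)$ admits a twisted Birkhoff decomposition, so $g(s)\in\Omega$ and, by the uniqueness built into the definition of $\Omega$, the factorization $u(s)b(s)$ is precisely that decomposition. Moreover
\[
 g(s)g(s+1)^{-1}=u(s)\,b(s)b(s+1)^{-1}\,u(s+1)^{-1}=u(s)\,l(s)\,u(s+1)^{-1}=a(s)\in A,
\]
so $(g(s))_{s\in\mathbb{Z}}$ satisfies the band condition of Definition~\ref{def-band} and has every term in $\Omega$; hence it is a point of $B(G,c)^\circ(R)$. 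Comparing with the definition of $P$ in Equation~(\ref{eq:10}), its image is $(b(s)b(s+1)^{-1})_{s\in\mathbb{Z}}=(l(s))_{s\in\mathbb{Z}}$, which furnishes the required preimage.

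The hard part is entirely absorbed into Theorem~\ref{thm:cross-sec discrete}: it is what produces, functorially in $R$, the gauge element $u$ conjugating $(l(s))$ into the cross-section $A^\mathbb{Z}$. Everything else — the telescoping in $B^-$ and the bookkeeping of the twisted Birkhoff decomposition — is routine and takes place entirely inside $G(R)$, so no additional care is needed to remain at the level of $R$-points.
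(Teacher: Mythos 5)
Your proof is correct and follows essentially the same route as the paper's: both integrate the sequence $(l(s))$ to a sequence $(b(s))$ in $B^-(R)$, invoke Theorem~\ref{thm:cross-sec discrete} to produce the gauge element $u$, and set $g(s)=u(s)b(s)$. You have merely spelled out the ``direct calculation'' that the paper leaves to the reader.
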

\begin{proof}
Let $l \in (L^{c,e})^\mathbb{Z}(R)$ and $b(0) \in B^-(R)$. 
Notice that there exist unique elements $b(s) \in B^-(R) \ ( s \in \mathbb{Z} \setminus \{0\}) $ such that 
\[
b(s)b(s+1)^{-1}= l(s), \qquad (s \in \mathbb{Z}). 
\]
Then let $u \in U^\mathbb{Z}(R)$ such that $u \cdot l = a \in A^\mathbb{Z}(R)$ and set 
\[
g(s):= u(s) b(s), \qquad (s \in \mathbb{Z}).
\]
A direct calculation shows that $b:=(g(s))_{s \in \mathbb{Z}}$ belongs to $B(G,c)^\circ(R)$ and $P(b)= l$. 
\end{proof}

\begin{Cor}
\label{cor:H is the quotient}
    The discrete Miura transformation $H: (L^{c,e})^\mathbb{Z} \to A^\mathbb{Z}$ sends a point of $(L^{c,e})^\mathbb{Z}$ to its unique conjugate in $A^\mathbb{Z}$ under the discrete gauge action of $U^\mathbb{Z}$.
\end{Cor}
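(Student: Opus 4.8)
The plan is to deduce the statement from three results already in hand: the surjectivity of $P$ (Corollary~\ref{cor:P surjective}), the identity~(\ref{eq:10}), and the cross-section theorem (Theorem~\ref{thm:cross-sec discrete}). Fix a $\mathbb{C}$-algebra $R$ and a point $l=(l(s))_{s\in\mathbb{Z}}\in(L^{c,e})^\mathbb{Z}(R)$. Since $L^{c,e}=M\cap B^-\subseteq M$ and $A\subseteq M$, both $l$ and every element of $A^\mathbb{Z}$ lie in $M^\mathbb{Z}$, so that the discrete gauge action of $U^\mathbb{Z}$ on $l$ and the notion of a conjugate of $l$ lying in $A^\mathbb{Z}$ make sense; by Theorem~\ref{thm:cross-sec discrete} there is exactly one such conjugate, and the goal is to identify it with $H(l)$.

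First I would compute $H(l)$ through a preimage. By Corollary~\ref{cor:P surjective} choose $b=(g(s))_{s\in\mathbb{Z}}\in B(G,c)^\circ(R)$ with $P(b)=l$. Unwinding the defining diagram of $H$, the relations $F=F'\circ P$ and $H=\rho\circ F'$, where $\rho:B(G,c)\sslash G\to A^\mathbb{Z}$ is the isomorphism of \S\ref{subsec-G-act} sending a band to $(g(s)g(s+1)^{-1})_{s\in\mathbb{Z}}$, give $H\circ P=\rho\circ F$. Hence $H(l)=H(P(b))=\rho(F(b))=a$, where $a=(a(s))_{s\in\mathbb{Z}}$ with $a(s)=g(s)g(s+1)^{-1}$; in particular $H(l)\in A^\mathbb{Z}(R)$.

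It then remains to exhibit $a$ as a gauge conjugate of $l$. Writing the twisted Birkhoff decompositions $g(s)=u(s)b(s)$ used to define $P$, the identity~(\ref{eq:10}) reads $l(s)=u(s)^{-1}a(s)u(s+1)$ for all $s$, that is $a(s)=u(s)l(s)u(s+1)^{-1}$. Comparing with the gauge formula~(\ref{eq:disc Gauge}), this says exactly that $a=u\cdot l$ with $u:=(u(s))_{s\in\mathbb{Z}}\in U^\mathbb{Z}(R)$. Thus $H(l)=a$ belongs to $A^\mathbb{Z}(R)$ and lies in the $U^\mathbb{Z}$-orbit of $l$, so by the cross-section property of Theorem~\ref{thm:cross-sec discrete} it is the unique conjugate of $l$ in $A^\mathbb{Z}$, as claimed.

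Since the two substantive ingredients---the freeness and cross-section statement, and the surjectivity of $P$---are already established, I expect no genuine obstacle: the work is a diagram chase combined with the bookkeeping of~(\ref{eq:10}). The only point requiring care is matching the conventions, namely verifying that the rearranged form $a(s)=u(s)l(s)u(s+1)^{-1}$ of~(\ref{eq:10}) is precisely the action $u\cdot l$ of~(\ref{eq:disc Gauge}), and not that of $u^{-1}$.
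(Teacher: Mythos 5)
Your proof is correct and follows the same route as the paper: pick a preimage $b$ of $l$ under $P$ via Corollary~\ref{cor:P surjective}, use Equation~(\ref{eq:10}) to recognize $H(l)$ as $u\cdot l$ for the $U^\mathbb{Z}$-point coming from the twisted Birkhoff decompositions, and conclude by Theorem~\ref{thm:cross-sec discrete}. The convention check at the end is right: $a(s)=u(s)l(s)u(s+1)^{-1}$ matches~(\ref{eq:disc Gauge}) exactly.
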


\begin{proof}
We recall that the discrete Miura transformation $H$ is the unique morphism making the diagram
\[
\begin{tikzcd}
	B(G,c)^\circ && (L^{c,e})^{\mathbb{Z}} \\
	B(G,c) && A^{\mathbb{Z}}
	\arrow["{P}", from=1-1, to=1-3]
	\arrow[ from=1-1, to=2-1]
	\arrow["{H}", from=1-3, to=2-3]
	\arrow["{\pi}", from=2-1, to=2-3]
\end{tikzcd}
\]
commute. 
Here $B(G,c)^\circ \longrightarrow B(G,c)$ is the natural inclusion and the morphism $\pi$ sends a band $(g(s))_{s \in \mathbb{Z}}$ to the sequence $(g(s)g(s+1)^{-1})_{s \in \mathbb{Z}}$.

Let $R$ be a $\mathbb{C}$-algebra and $(g(s))_{s \in \mathbb{Z}} \in B(G,c)^\circ(R)$ be a band such that any component admits a twisted Birkhoff decomposition 
\[
g(s)=u(s)b(s) \qquad  (u(s) \in U(R), \ b(s) \in B^-(R), \ s \in \mathbb{Z}).
\]
Recall that $P(b)=(b(s)b(s+1)^{-1})_{s \in \mathbb{Z}}$.
Equation (\ref{eq:10}) implies that the discrete Gauge action of the sequence $(u(s))_{s \in \mathbb{Z}}$ sends $P(b)$ to $\pi(b)= H(P(b))$.  Then Corollary \ref{cor:P surjective} and Theorem \ref{thm:cross-sec discrete} allow to conclude.
\end{proof}

Summing up, Theorem \ref{thm:cross-sec discrete} implies that the quotient of the action of $U^\mathbb{Z}$ on $M^\mathbb{Z}$ exists and it can naturally be identified with $A^\mathbb{Z}$. 
Then Corollary \ref{cor:H is the quotient} asserts that the discrete Miura transformation $H$ can be identified with the restriction to $(L^{c,e})^\mathbb{Z}$ of the quotient morphism $M^\mathbb{Z} \to A^\mathbb{Z}.$

\bigskip
Finally, observe that the abelian group $\mathbb{Z}$ acts on the schemes $M^\mathbb{Z}$, $U^\mathbb{Z}$ and $A^\mathbb{Z}$ by translating the components of points, and that the isomorphism 
$U^\mathbb{Z} \times A^\mathbb{Z} \to M^\mathbb{Z}$ of Corollary~\ref{cor-34} is equivariant with respect to
these actions. Let $N \ge 2$ and consider the subschemes $M^{(N)}$, $U^{(N)}$, $A^{(N)}$ 
of fixed points under the action of the subgroup $N\mathbb{Z}$. These subschemes can be respectively identified with products of copies of $M$, $U$, $A$ indexed by the cyclic group $\mathbb{Z}/ N \mathbb{Z}$. Moreover the group $U^{(N)}$ acts on $M^{(N)}$ by \emph{cyclic discrete gauge transformation: 
\begin{equation}\label{eq-1.13}
(u(s)) \cdot (m(s)) := \bigl(u(s)m(s)u(s+1)^{-1}\bigr)\qquad (s \in \mathbb{Z}/N\mathbb{Z}). 
\end{equation}
}
Thus by taking fixed points under the action of $N\mathbb{Z}$ in Corollary~\ref{cor-34}, we immediately obtain
the following corollary, which was stated in \cite{SS} :
\begin{Cor}[\cite{SS} Theorem 2.9]
For any $\mathbb{C}$-algebra $R$, the group $U^{\mathbb{Z}/ N \mathbb{Z}}(R)$ acts freely on $M^{\mathbb{Z}/ N \mathbb{Z}}(R)$ 
    by the cyclic discrete gauge transformation (\ref{eq-1.13}), and $A^{\mathbb{Z}/ N \mathbb{Z}}(R)$ is a cross-section. \qed
\end{Cor}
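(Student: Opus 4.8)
The plan is to derive this cyclic statement directly from Corollary~\ref{cor-34} by passing to fixed points under the subgroup $N\mathbb{Z}\subset\mathbb{Z}$. The group $\mathbb{Z}$ acts on each of $M^\mathbb{Z}$, $U^\mathbb{Z}$ and $A^\mathbb{Z}$ by translating the index, say $(t\cdot g)(s):=g(s+t)$ for $t\in\mathbb{Z}$. First I would record that the discrete gauge action commutes with these translations: since $(g\cdot h)(s)=g(s)h(s)g(s+1)^{-1}$, translating by $t$ and then applying the gauge formula gives the same result as gauging the translated sequences. Consequently the map $(u,a)\mapsto u\cdot a$ furnishing the isomorphism $U^\mathbb{Z}\times A^\mathbb{Z}\to M^\mathbb{Z}$ of Corollary~\ref{cor-34} intertwines the diagonal $\mathbb{Z}$-action on the source with the $\mathbb{Z}$-action on the target, so it is $\mathbb{Z}$-equivariant, and in particular $N\mathbb{Z}$-equivariant.

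Next I would identify the $N\mathbb{Z}$-fixed $R$-points. A sequence $(g(s))_{s\in\mathbb{Z}}$ is fixed by $N\mathbb{Z}$ precisely when it is $N$-periodic, $g(s+N)=g(s)$ for all $s$, and such a sequence is determined by the finite tuple $(g(s))_{s\in\mathbb{Z}/N\mathbb{Z}}$. This yields the natural identifications $M^{(N)}\simeq M^{\mathbb{Z}/N\mathbb{Z}}$, $U^{(N)}\simeq U^{\mathbb{Z}/N\mathbb{Z}}$ and $A^{(N)}\simeq A^{\mathbb{Z}/N\mathbb{Z}}$ already announced in the text. The one point deserving care is that the discrete gauge action, restricted to $N$-periodic sequences, becomes exactly the cyclic gauge transformation~(\ref{eq-1.13}): for an index $s$ with $0\le s\le N-1$ the expression $g(s)h(s)g(s+1)^{-1}$ uses $g(s+1)$, and for $s=N-1$ periodicity forces $g(N)=g(0)$, producing the required wrap-around modulo $N$.

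Since an isomorphism of schemes that is equivariant for a group action restricts to a bijection on the corresponding fixed-point $R$-points, the equivariant isomorphism of Corollary~\ref{cor-34} restricts to an isomorphism $U^{\mathbb{Z}/N\mathbb{Z}}\times A^{\mathbb{Z}/N\mathbb{Z}}\to M^{\mathbb{Z}/N\mathbb{Z}}$ sending $(u,a)$ to $u\cdot a$. By the same equivalence used in the proof of Corollary~\ref{cor-34}, the bijectivity of $(u,a)\mapsto u\cdot a$ is precisely the assertion that $U^{\mathbb{Z}/N\mathbb{Z}}(R)$ acts freely on $M^{\mathbb{Z}/N\mathbb{Z}}(R)$ by the cyclic gauge transformation with $A^{\mathbb{Z}/N\mathbb{Z}}(R)$ as a cross-section, which is the desired corollary.

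I expect no serious obstacle: the whole argument is formal once Corollary~\ref{cor-34} is available. The only delicate bookkeeping is the equivariance check and the verification that restricting the gauge action to periodic sequences yields the cyclic formula with the correct index wrap-around, both of which are routine.
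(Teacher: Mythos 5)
Your argument is exactly the paper's: the authors also deduce the corollary by observing that the isomorphism $U^\mathbb{Z}\times A^\mathbb{Z}\to M^\mathbb{Z}$ of Corollary~\ref{cor-34} is equivariant for the translation action of $\mathbb{Z}$, identifying the $N\mathbb{Z}$-fixed points with $\mathbb{Z}/N\mathbb{Z}$-indexed tuples carrying the cyclic gauge transformation, and taking fixed points. Your write-up just makes explicit the equivariance and wrap-around checks that the paper leaves implicit.
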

This result yields for $G = SL(2)$ a discrete version of the Virasoro algebra studied in \cite[\S 6]{FRS}, and more
generally it allows to define certain lattice $\mathcal{W}$-algebras \cite[\S 2.4]{SS}. 

\begin{remark}
{\rm
In this section, as in most places in this paper, we have assumed that $G$ is of type $A$, $D$, $E$ in order to 
obtain relations between bands and the representation theory of quantum affine algebras. However,
the cross-section theorem of Steinberg is proved for groups of all types including types $B$, $C$, $F$, $G$, and so is 
Theorem~\ref{thm:cross-sec loop}. 
Our proof of Theorem~\ref{thm:cross-sec discrete} also works more generally
for groups of types $B$, $C$, $F$,~$G$. The only place where we have used the fact that $G$ is of type $A$, $D$, $E$
is the proof of Lemma~\ref{lem:groups Hs}~(1). But this can readily be extended to the general case by replacing the 
Auslander-Reiten quiver of $Q$ by the Auslander-Reiten quiver of a suitable hereditary Artin algebra of finite representation type, see \cite[VIII]{ARS}. 
}
\end{remark}

\subsection{Proof of Proposition~\ref{prop7}}\label{subsec3.11}

We can now give the proof of Proposition~\ref{prop7}.

Let $\delta : L^{c,e} \to (L^{c,e})^\mathbb{Z}$ denote the diagonal embedding,
and let $\gamma := H \circ \delta$ be its composition with the discrete Miura
transformation $H$ :
\[
\xymatrix@-1.0pc{
L^{c,e} \ar[rrdd]^{\gamma}\ar[rr]^{\delta}&&
 (L^{c,e})^\mathbb{Z}\ar[dd]^{H}
\\
\\
&& A^\mathbb{Z} 
}
\]
We claim that $\gamma$ is equal to the composition $\delta' \circ \psi$, where $\delta' : A \to A^\mathbb{Z}$ is the 
diagonal embedding, and $\psi$ is the map defined in \S\ref{subsec:1.2}.
In other words, the following diagram commutes:
\[
\xymatrix@-1.0pc{
L^{c,e} \ar[rrdd]^{\gamma}\ar[rr]^{\delta}\ar[dd]^{\psi}&&
 (L^{c,e})^\mathbb{Z}\ar[dd]^{H}
\\
\\
A\ar[rr]^{\delta'} && A^\mathbb{Z} 
}
\]
By Theorem \ref{thm:cross-sec discrete}, we can consider the morphism $\upsilon: (L^{c,e})^\mathbb{Z} \to U^\mathbb{Z}$ assigning to an element $l$ of $(L^{c,e})^\mathbb{Z}$ the unique $u$ in $U^\mathbb{Z}$ such that $u \cdot l \in A^\mathbb{Z}$.
By Corollary \ref{cor:H is the quotient}, we have 
\begin{equation}
\label{eq:13}
    \upsilon(l) \cdot l= H(l) \qquad(l \in (L^{c,e})^{\mathbb{Z}}).
\end{equation}
Recall that the abelian group $\mathbb{Z}$ acts on the schemes $(L^{c,e})^\mathbb{Z}$, $U^\mathbb{Z}$ and $A^\mathbb{Z}$ by translating the components of points. The invariant elements with respect to these actions are the ones belonging to the diagonals. 
Let $l_\circ \in L^{c,e}$.
Then $\delta(l_\circ)$ is $\mathbb{Z}$-invariant.
Moreover, as the morphisms $\upsilon$ and $H$ are clearly equivariant with respect to the aforementioned $\mathbb{Z}$-actions, $\upsilon (\delta(l_\circ))$ is also $\mathbb{Z}$-invariant. 
Hence, there exists $u_\circ \in U$ such that $\upsilon(\delta(l_\circ))$ is diagonal with every component equal to $u_\circ$.
Then, by Equation (\ref{eq:13}), $H \circ \delta(l_\circ)$ is diagonal with every component equal to $u_\circ l_\circ u_\circ^{-1} \in A$. 
By the definition of the morphism $\psi$, we deduce that $u_\circ l_\circ u_\circ^{-1}=\psi(l_\circ)$. 
This proves the claim.

Let $s \in \mathbb{Z}, \ k \in \mathbb{Z}_{\geq 0}$ and $1\le i \le r$. 
Observe that via the isomorphism $B(G,c)\sslash G \simeq A^\mathbb{Z}$, the function $\theta^{(s)}_{i,k}$ identifies with 
\[
(a(s))_{s \in \mathbb{Z}}\longmapsto\Delta_{\varpi_i, \varpi_i}(a(s)a(s+1) \cdots a(s+k-1)).
\]
Hence, $( \delta')^*(\theta^{(s)}_{i,k})$ is equal to the function $\theta_{i,k}$ introduced in \S\ref{subsec_1.3}.
Because of the claim, we deduce that
\[
(H \circ \delta)^*(\theta^{(s)}_{i,k})= \psi^*(\theta_{i,k})
\]
On the other hand, by Theorem~\ref{Thm-FR}, we have that the pullback under $H$ is identified with the $q$-character homomorphism. 
Recall that to obtain the character of a $U_q(\widehat{\mathfrak{g}})$-module $M$,
we just have to specialize $Y_{j,a} \mapsto y_j$ in the $q$-character of~$M$.
Taking into account the isomorphism $\kappa$ of Theorem~\ref{Thm-FR}, we see 
that $y_j$ corresponds to the function on $L^{c,e}$ sending $l_\circ$ to $\Delta_{w_0(\varpi_j),w_0(\varpi_j)}(l_\circ)$. Hence we deduce that $(H \circ \delta)^*(\theta^{(s)}_{i,k})$ is the character $Q^{(i)}_k$ of the Kirillov-Reshetikhin module $W^{(i)}_{k,\,q^{2s+1-\xi_i}}$
expressed in the variables~$y_j$. \qed

\section{Bands and representations of shifted quantum affine algebras}
\label{sect4}

In \S\ref{sect3} we have constructed cluster algebra structures on the 
invariant subalgebras $R(G,c)^G$, $R(G,c)^U$ and $R(G,c)^{U^-}$ of $R(G,c)$, and we
have explained their connections with the categories $\mathcal{C}_\mathbb{Z}$, $O^+_\mathbb{Z}$
and $O^-_\mathbb{Z}$.
We will now construct a cluster structure on the algebra $R(G,c)$ itself,
and explain its meaning in terms of a category $\mathcal{O}^{\rm shift}_\mathbb{Z}$ of representations
of shifted quantum affine algebras. This was in fact the original motivation for
introducing the scheme $B(G,c)$.

\subsection{Cluster structure on $R(G,c)$}

\begin{figure}[t!]
\[
\def\objectstyle{\scriptscriptstyle}
\def\lablestyle{\scriptscriptstyle}
\xymatrix@-1.0pc{
&\Delta^{(1)}_{c^2\varpi_1,\,\varpi_1}\ar[rd]
&
&\ar[ld] \Delta^{(1)}_{c^2\varpi_3,\,\varpi_3} 
\\
&&\ar[ld] \Delta^{(0)}_{c^2\varpi_2,\,\varpi_2} \ar[rd]
&&
\\
&\ar[uu] \Delta^{(0)}_{c^2\varpi_1,\,\varpi_1} \ar[rd]&
&\ar[ld] \Delta^{(0)}_{c^2\varpi_3,\,\varpi_3} \ar[uu]
\\
&&\ar[uu] {\red \Delta^{(0)}_{c\varpi_2,\,\varpi_2}}\ar[d] &&
\\
&&\ar[ld]\ar[rd] {\green \Delta^{(0)}_{c\varpi_2,\,\wt{c}\varpi_2}} &&
\\
&\ar[uuu]{\red \Delta^{(0)}_{c\varpi_1,\,\varpi_1}}\ar[d]  && {\red \Delta^{(0)}_{c\varpi_3,\,\varpi_3}}\ar[uuu]\ar[d]
\\
&{\green \Delta^{(0)}_{c\varpi_1,\,\wt{c}\varpi_1}} \ar[rd] &&\ar[ld] {\green \Delta^{(0)}_{c\varpi_3,\,\wt{c}\varpi_3}}
\\
&& \ar[d]\ar[uuu]{\red \Delta^{(0)}_{\varpi_2,\,\wt{c}\varpi_2}} &&
\\
&&\ar[ld] {\green \Delta^{(0)}_{\varpi_2,\,\wt{c}^2\varpi_2}} \ar[rd]&&
\\
&\ar[uuu]{\red \Delta^{(0)}_{\varpi_1,\,\wt{c}\varpi_1}}\ar[d]  && {\red \Delta^{(0)}_{\varpi_3,\,\wt{c}\varpi_3}}\ar[d]\ar[uuu]
\\
&{\green \Delta^{(0)}_{\varpi_1,\,\wt{c}^2\varpi_1}} \ar[rd] &&\ar[ld] {\green \Delta^{(0)}_{\varpi_3,\,\wt{c}^2\varpi_3}}
\\
&& \ar[uuu]\Delta^{(-1)}_{\varpi_2,\,\wt{c}^2\varpi_2} \ar[ld]\ar[rd]
&&
\\
&\ar[uu] \Delta^{(-1)}_{\varpi_1,\,\wt{c}^2\varpi_1} &&\ar[uu] \Delta^{(-1)}_{\varpi_3,\,\wt{c}^2\varpi_3} 
\\
}
\]
\caption{\label{Fig3} {\it The labelled quiver $\Gamma$ for $c = s_1s_3s_2$ in type $A_3$.}}
\end{figure} 

We first define the doubly-infinite labelled quiver $\Gamma$.
Its underlying graph can be des\-cri\-bed as a finite modification of the underlying graph
of the seed $\Xi$ of \S\ref{subsec3.7}.
Recall the integers $m_i\ (1\le i\le r)$ introduced in \S\ref{subsect3.4}.
The vertices of $\Xi$ are labelled by pairs $(i,s)\ (1\le i \le r,\ s\in\mathbb{Z})$.
At each vertex $v$ of the finite subset
\[
S:= \{(i,s) \mid 1\le i \le r,\ -m_i\le s \le -1\}
\]
we perform the following local modifications of the graph:
\begin{itemize}
\item[(i)]
For every $v\in S$, replace $v$ by a pair of vertices $v'$ and $v''$ one above the other,
connected by a vertical down arrow $v'\to v''$.
\item[(ii)]
If $v\to w$ is an arrow connecting two vertices $v, w\in S$ in different columns,
replace it by an arrow $v''\to w'$.
\item[(iii)]
If $v\to w$ is an arrow connecting two vertices $v, w\in S$ in the same column,
replace it by an arrow $v'\to w''$.
\item[(iv)]
If $u\to v$ is an arrow connecting a vertex $u \not \in S$ with $v\in S$, 
replace it by an arrow $u\to v'$ if $u$ and $v$ are not in the same column,
 and by an arrow $u\to v''$ if $u$ and $v$ are in the same column.
\item[(v)]
If $v\to u$ is an arrow connecting a vertex $u \not \in S$ with $v\in S$, 
replace it by an arrow $v''\to u$ if $u$ and $v$ are not in the same column, and 
by an arrow $v'\to u$ if $u$ and $v$ are in the same column.
\end{itemize}
As a result, we have added $N$ new vertices, where $N$ is the number of positive roots
of $\mathfrak{g}$, and we have transformed certain oriented 3-cycles of $\Xi$ into oriented 4-cycles.
To highlight these modifications, for every $v\in S$ we will paint the corresponding 
vertex $v'$ in red and $v''$ in green.

All vertices of $\Gamma$ will be labelled by elements of $R(G,c)$ of the form 
\[
\Delta^{(s)}_{c^k(\varpi_i),\,\widetilde{c}^l(\varpi_i)},\qquad (1\le i \le r,\ s\in\mathbb{Z},\ 0\le k,l \le m_i,\ 
m_i-1\le k+l \le m_i),
\]
where $\widetilde{c}:=w_0c^{-1}w_0$ is the Coxeter element dual to $c$.
We refer the reader to \cite[\S5.1]{FL} for the precise labeling rule.

\begin{example}
{\rm
Let $G$ be of type $A_3$ and $c = s_1s_3s_2$.
The corresponding labelled quiver $\Gamma$ is displayed in Figure~\ref{Fig3}.
\qed}
\end{example}

\begin{Thm}[\cite{FL}]\label{Thm33}
The ring $R(G,c)$ of regular functions on the scheme $B(G,c)$ has the structure of a cluster algebra with initial seed
given by $\Gamma$.
\end{Thm}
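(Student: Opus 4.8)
The plan is to obtain Theorem~\ref{Thm33} from the Starfish lemma of \cite{FWZ}, following the same scheme as the proof of Theorem~\ref{Thm.7.3} but now for the whole ring $R(G,c)$ and the seed $\Gamma$. The ambient ring is an integral domain, and in fact a unique factorisation domain, hence integrally closed, so the normality hypothesis underlying the Starfish argument holds. Because $\Gamma$ has countably many vertices, I would first present it as a nested union of finite full subquivers and note that every exchange relation, every cluster variable, and every element of $\mathrm{Frac}(R(G,c))$ involves only finitely many of the variables $\Delta^{(s)}_{c^k(\varpi_i),\widetilde{c}^{l}(\varpi_i)}$. This reduces all the verifications to finite subquivers and allows one to pass to the limit, in line with the infinite-rank cluster formalism used in \cite{HL1,GHL}.

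The key structural input is that each first-step mutation at a vertex of $\Gamma$ is an instance of a generalised minor identity in the sense of \cite[Theorem 1.17]{FZ}, combined with the gluing relations of Proposition~\ref{lem: glueing formulas}; this is the exact analogue, for $\Gamma$, of the fact that first mutations of $\Theta$ realise the functional relations of Proposition~\ref{Prop.7.1}. Carrying these out shows that the exchange partner of every $\Gamma$-vertex is again a generalised minor $\Delta^{(s)}_{v(\varpi_i),w(\varpi_i)}$, hence a regular function on $B(G,c)$, so that all first mutations stay inside $R(G,c)$. I would then check the remaining hypotheses: the algebraic independence of the vertices of $\Gamma$, verified on each finite subquiver by a leading-term argument building on the known independence of the $\Xi$-cluster of $R(G,c)^U$ together with the adjunction of the $N$ new (red and green) variables; and the fact that the generalised minors labelling $\Gamma$ and their exchange partners are irreducible in the UFD $R(G,c)$ and pairwise coprime. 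Granting these, the normality argument underlying the Starfish lemma yields the inclusion $\overline{\mathcal A}(\Gamma)\subseteq R(G,c)$ for the upper cluster algebra.

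To obtain the genuine, and not merely upper, cluster algebra claimed in the statement, the plan is to prove the reverse inclusion $R(G,c)\subseteq \mathcal A(\Gamma)$ by exhibiting a generating set of $R(G,c)$ consisting of cluster variables reachable from $\Gamma$. Using the isomorphism $B(G,c)\simeq G\times A^\mathbb{Z}$, the ring $R(G,c)$ is generated by the generalised minors $\Delta^{(s)}_{v(\varpi_i),w(\varpi_i)}$, which account for the coordinates of $g(0)$ and hence for $\mathbb{C}[G]$, together with the functions $\theta^{(s)}_{i,k}$ generating $R(G,c)^G=\mathbb{C}[A^\mathbb{Z}]$. The latter are already known to be cluster variables (see the remark after Theorem~\ref{Thm.7.3}), and I would realise the former as cluster variables or cluster monomials of $\mathcal A(\Gamma)$ by transporting the double Bruhat cell cluster structure of \cite{BFZ} through the explicit labelling and mutation rules of \cite[\S5.1]{FL}. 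Combining the chain $R(G,c)\subseteq \mathcal A(\Gamma)\subseteq \overline{\mathcal A}(\Gamma)\subseteq R(G,c)$ then forces equality throughout, so that $R(G,c)=\mathcal A(\Gamma)$.

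The main obstacle is precisely this last step: showing that the generalised minors needed to generate $R(G,c)$ are all cluster variables of $\Gamma$, so that the cluster algebra, and not only the upper cluster algebra, exhausts the whole coordinate ring. This is exactly where the behaviour departs from the $U$-invariant subalgebra $R(G,c)^U$, for which only the upper cluster algebra is known to coincide with the ring in types $D$ and $E$. A careful bookkeeping of the labelling of $\Gamma$ and of the explicit mutation sequences that reach the relevant minors in all types $A$, $D$, $E$ is what I expect to make the argument succeed for the full ring.
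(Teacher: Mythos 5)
First, a caveat: the paper does not actually contain a proof of Theorem~\ref{Thm33}. It is quoted from \cite{FL}, and the introduction explicitly excludes it from the list of results proved here; the only information the paper gives about the argument is that the exchange relations at the red and green vertices of $\Gamma$ (instances of the generalized minor identities of \cite{FZ}) and Oya's theorem on the cluster structure of $\mathbb{C}[G]$ ``play a crucial/important role'' in the proof. Your overall architecture (Starfish lemma plus identification of a generating set of $R(G,c)$ with cluster variables) is broadly consistent with those indications, but as written it has two concrete gaps.

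The first is that you apply the Starfish lemma in the wrong direction. Under its hypotheses (normal domain, pairwise coprime cluster variables, one-step mutations landing in the ring and coprime to the variable they replace), the Starfish lemma of \cite{FWZ} yields $R(G,c)\subseteq \mathcal{U}(\Gamma)$, the upper bound, not $\overline{\mathcal{A}}(\Gamma)\subseteq R(G,c)$. Consequently your chain $R(G,c)\subseteq \mathcal{A}(\Gamma)\subseteq \overline{\mathcal{A}}(\Gamma)\subseteq R(G,c)$ does not close: the inclusion $\mathcal{A}(\Gamma)\subseteq R(G,c)$ --- that \emph{every} cluster variable reached by an arbitrary mutation sequence, and not merely the first-step exchange partners, is a regular function on $B(G,c)$ --- is nowhere established, and this is exactly the half of the equality that checking one-step mutations cannot supply. (It is also the half that fails to be known for $R(G,c)^U$ in types $D$, $E$, so it cannot be waved through.) The second gap concerns the ``$\mathbb{C}[G]$ part'' of $\Gamma$: you propose to transport the double Bruhat cell structure of \cite{BFZ}, but \cite{BFZ} only gives a cluster structure on $\mathbb{C}[G^{w_0,w_0}]$ with the frozen minors inverted, whereas the corresponding minors labelling $\Gamma^{(0)}$ are not invertible in $R(G,c)$. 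The paper stresses that one needs Oya's theorem \cite{O} that the cluster structure survives without inverting the frozen variables, giving a cluster structure on all of $\mathbb{C}[G]$, and that this fact is an essential ingredient of the proof of Theorem~\ref{Thm33}; your proposal does not account for this step.
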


Consider the finite subset $V^{(0)}$ of vertices of $\Gamma$ labelled by the 
$\Delta^{(s)}_{c^k(\varpi_i),\,\widetilde{c}^l(\varpi_i)}$ for which $s=0$.
This consists of the green and red vertices, together with one rim of black
vertices above the highest red vertices. All these functions 
$\Delta^{(0)}_{c^k(\varpi_i),\,\widetilde{c}^l(\varpi_i)}$
are obtained by pullback of the corresponding generalized minors of $G$
under the natural projection $\pi_0:B(G,c) \to G$ mapping a band $(g(s))_{s\in\mathbb{Z}}$
to its 0th component $g(0)$. 

In fact, the cluster subalgebra of $R(G,c)$ with initial seed given by the 
full subquiver $\Gamma^{(0)}$ of $\Gamma$ supported on $V^{(0)}$ is isomorphic to the cluster algebra
$\mathbb{C}[G]$.
More precisely, let $V_f\subset V^{(0)}$ denote the subset consisting of the $2r$ vertices
sitting on the top and bottom rims of $V^{(0)}$. We regard the cluster variables of $V_f$ as frozen.
A cluster algebra structure on the coordinate ring of the open double Bruhat cell 
$G^{w_0,w_0} := (Bw_0B)\cap (B^-w_0B^-)$ is described in \cite{BFZ}.
One can check that 
$\Gamma^{(0)}$ coincides with the pullback under $\pi_0$ of one of the initial seeds of $\mathbb{C}[G^{w_0,w_0}]$.
However, in \cite{BFZ} the frozen variables are assumed to be invertible, since the 
corresponding minors do not vanish on $G^{w_0,w_0}$. It was recently proved by Oya \cite{O} that if we remove this condition of invertibility, we get a cluster structure on $\mathbb{C}[G]$, (see also \cite{QY}). 
This fact plays an important role in our proof of Theorem~\ref{Thm33}.

Recall the integers
$a_{ij} \, (i,j\in I)$ defined in the proof of Proposition~\ref{prop6}. Similarly, we define
$b_{ij} = 1$ if $s_j$ precedes $s_i$ in a reduced decomposition of $\wt{c}$, and otherwise $b_{ij} = 0$.
Let $\Delta^{(0)}_{c^{m_i-1-k}(\varpi_i),\,\wt{c}^k(\varpi_i)}$ be the initial cluster variable
sitting at one of the red vertices. The initial cluster exchange relation at this vertex 
is 
\[
 \Delta^{(0)}_{c^{m_i-1-k}(\varpi_i),\, \wt{c}^{\,k}(\varpi_i)}\Delta^{(0)}_{c^{m_i-k}(\varpi_i),\, \wt{c}^{\,k+1}(\varpi_i)} 
\]
\[
 =
 \Delta^{(0)}_{c^{m_i-k}(\varpi_i),\, \wt{c}^{\,k}(\varpi_i)} \Delta^{(0)}_{c^{m_i-1-k}(\varpi_i),\, \wt{c}^{\,k+1}(\varpi_i)} \\[2mm]
 + 
\prod_{j:\ c_{ij}=-1} \Delta^{(0)}_{c^{m_i-1-k+a_{ij}}(\varpi_i),\, \wt{c}^{\,k+b_{ij}}(\varpi_i)}.
\]
 This identity is the pullback under the morphism $\pi_0$ of another instance of the generalized minor identities of \cite[Theorem 1.17]{FZ}.
There is a similar formula for mutating at green vertices of~$\Gamma$.
This special class of mutations also plays a crucial role in the proof of Theorem~\ref{Thm33}. We will see below its interpretation in terms of shifted quantum affine algebras.

\subsection{$R(G,c)$ and the category $\mathcal{O}^{\rm shift}_\mathbb{Z}$}

In 2019, Finkelberg and Tsimbaliuk \cite{FT} introduced a new class of algebras $U_{q,\mu}(\widehat{\mathfrak{g}})$ depending on a weight $\mu\in P$, called \emph{shifted quantum affine algebras}.
In \cite{H}, a category $\mathcal{O}_\mu$ of $U_{q,\mu}(\widehat{\mathfrak{g}})$-modules was defined and studied, and it was shown that the direct
sum 
\[
\mathcal{O}^{\mathrm{shift}} := \bigoplus_{\mu \in P} \mathcal{O}_\mu
\]
is endowed with a fusion product, which yields a ring structure on its Grothendieck group. The category $\mathcal{O}^{\mathrm{shift}}$ contains the subcategory $\mathcal{C}^{\mathrm{shift}}$ of
finite-dimensional representations. Hernandez has introduced a subcategory $\mathcal{C}^{\mathrm{shift}}_\mathbb{Z}$ of $\mathcal{C}^{\mathrm{shift}}$ and shown that its Grothendieck ring 
possesses the same cluster algebra structure as the Grothendieck ring of $O_\mathbb{Z}^+$. 

In \cite{GHL} the subcategory $\mathcal{O}^{\mathrm{shift}}_\mathbb{Z}$ of $\mathcal{O}^{\mathrm{shift}}$ was introduced, and its Grothendieck ring was shown to have a cluster structure.
More precisely, it was shown that the topological ring $K_0(\mathcal{O}^{\mathrm{shift}}_\mathbb{Z})$ contains a cluster algebra $\mathcal{A}_{w_0}$
whose closure equals $K_0(\mathcal{O}^{\mathrm{shift}}_\mathbb{Z})$.

The clusters of a distinguished family of seeds of $\mathcal{A}_{w_0}$ consist of so-called $Q$-variables:
\[
Q_{w(\varpi_i),a},\qquad (1\le i \le r,\ w\in W,\ a\in \mathbb{C}^*).
\]
These are elements of $K_0(\mathcal{O}^{\mathrm{shift}})$ introduced by Frenkel and Hernandez \cite{FH2},
who conjectured that they are classes of simple objects of $\mathcal{O}^{\mathrm{shift}}$.
They showed that they satisfy the following functional relations, called (extended) $QQ$-system.
For $w\in W$ such that $ws_i > w$, 
we have:
\begin{equation}\label{Eq12}
Q_{ws_i(\varpi_i),\,aq} Q_{w(\varpi_i),\,aq^{-1}} = [-w(\alpha_i)] Q_{ws_i(\varpi_i),\,aq^{-1}} Q_{w(\varpi_i),\,aq}
+ \prod_{j:\ c_{ij}=-1} Q_{w(\varpi_j),\,a}.
\end{equation}
In \cite[\S8.2]{GHL}, the following slightly renormalized $Q$-variables in $K_0(\mathcal{O}^{\mathrm{shift}}_\mathbb{Z})$ 
were introduced:
\[
\underline{Q}_{w(\varpi_i),q^{2k-\xi_i}}
,\qquad (1\le i \le r,\ w\in W,\ k\in \mathbb{Z}).
\]
They satisfy a modification of Equation~(\ref{Eq12}) in which the scaling factor $[-w(\alpha_i)]$
disappears. 
Namely, for $w\in W$ such that $ws_i > w$, 
we have:
\[
\underline{Q}_{ws_i(\varpi_i),\,q^{2k-\xi_i}} \underline{Q}_{w(\varpi_i),\,q^{2k-\xi_i-2}} 
\]
\[
=\ \underline{Q}_{ws_i(\varpi_i),\,q^{2k-\xi_i-2}} \underline{Q}_{w(\varpi_i),\,q^{2k-\xi_i}}
+\prod_{j:\ c_{ij}=-1} \underline{Q}_{w(\varpi_j),\,q^{2k-\xi_i-1}}. 
\]

\begin{Prop}[\cite{FL}]
The assignment $\Delta^{(s)}_{c^k(\varpi_i), w(\varpi_i)} \mapsto 
\underline{Q}_{w(\varpi_i),\, q^{2(s+k)-\xi_i}}$ extends to an algebra 
isomorphism from $R(G,c)$ to $\mathbb{C}\otimes {\mathcal A_{w_0}}$ matching the cluster structures
on both sides.
\end{Prop}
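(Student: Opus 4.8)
The plan is to reduce the statement to an isomorphism of initial seeds and then to invoke the principle that a cluster algebra is determined, as an algebra together with its cluster structure, by a single seed. Both sides are cluster algebras in the strict sense: $R(G,c)$ by Theorem~\ref{Thm33}, with initial seed $\Gamma$, and $\mathbb{C}\otimes\mathcal{A}_{w_0}$ by construction, with the distinguished seed whose cluster variables are the renormalized $Q$-variables $\underline{Q}_{w(\varpi_i),\,q^{2k-\xi_i}}$. I would first check that the assignment is well defined and bijective on initial cluster variables. Well-definedness is forced by the gluing relations of Proposition~\ref{lem: glueing formulas}: since $\Delta^{(s)}_{c^k(\varpi_i),w(\varpi_i)}=\Delta^{(s+1)}_{c^{k-1}(\varpi_i),w(\varpi_i)}$ replaces $(s,k)$ by $(s+1,k-1)$ while preserving the sum $s+k$, both representatives are sent to the same variable $\underline{Q}_{w(\varpi_i),\,q^{2(s+k)-\xi_i}}$. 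Bijectivity is then a matter of comparing the labeling conventions of $\Gamma$ (\cite[\S5.1]{FL}) with those of the distinguished seed of $\mathcal{A}_{w_0}$ (\cite[\S8.2]{GHL}).

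The core of the proof is to show that this vertex bijection transports the quiver of $\Gamma$ onto the quiver of the distinguished seed of $\mathcal{A}_{w_0}$. I would establish this by matching initial exchange relations, since matching the exchange relation at a vertex is equivalent to matching the arrows incident to it (the seeds being quivers). Each first-step mutation of $\Gamma$ is an instance of a generalized minor identity of \cite[Theorem~1.17]{FZ}; the red- and green-vertex relations are displayed explicitly above. On the other side, the mutations of the distinguished seed of $\mathcal{A}_{w_0}$ are governed by the renormalized $QQ$-system. Substituting the labeling rule into the minor identity and using $\widetilde c=w_0c^{-1}w_0$ to convert the pair of Weyl group elements indexing a minor into the single element $w$ indexing a $Q$-variable, I would check that the two families of relations agree term by term, including the products $\prod_{j:\,c_{ij}=-1}$, whose spectral shifts are controlled by the integers $a_{ij}$ and $b_{ij}$. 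For the bulk of the quiver, away from the finite modification region $S$, the identification can be bootstrapped from Proposition~\ref{prop28}, which already matches the cluster structure carried by the $\Delta^{(s)}_{\varpi_i,\varpi_i}$ with the $w=e$ part of $\mathcal{A}_{w_0}$ arising from $O^+_\mathbb{Z}$.

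With the two initial seeds identified, the assignment commutes with mutation and hence extends uniquely to a bijection between all cluster variables of the two algebras. Since $R(G,c)$ and $\mathbb{C}\otimes\mathcal{A}_{w_0}$ are each generated as algebras by their cluster variables, this bijection is an algebra isomorphism intertwining the two cluster structures, which is the assertion.

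I expect the principal difficulty to lie in the spectral-parameter bookkeeping of the quiver-matching step: one must verify globally, and not just at each individual exchange relation, that the triple $(s,k,\xi_i)$ assembles into exactly the exponent $2(s+k)-\xi_i$ demanded by the $\underline{Q}$-variables, and that the correspondence $(c^k,\widetilde c^l)\leftrightarrow w$ between indices of minors and of $Q$-variables is consistent throughout the doubly-infinite quiver. A secondary point is to keep the target equal to the genuine cluster algebra $\mathbb{C}\otimes\mathcal{A}_{w_0}$, rather than its topological completion $K_0(\mathcal{O}^{\mathrm{shift}}_\mathbb{Z})$, so that the isomorphism is one of ordinary cluster algebras and no infinite sums intervene.
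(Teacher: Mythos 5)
The paper does not actually contain a proof of this Proposition: it is one of the results whose proof is deferred to \cite{FL} (the introduction lists the statements proved here, and this is not among them). So I can only compare your proposal with the strategy the paper signals, namely that the exchange relations at the red and green vertices of $\Gamma$ are instances of the (renormalized) $QQ$-system, and that this seed-level identification is the engine of the isomorphism. Your outline --- check well-definedness via the gluing relations (correctly observing that they preserve $s+k$), match the initial seed $\Gamma$ with the distinguished seed of $\mathcal{A}_{w_0}$ by comparing exchange relations, then invoke the fact that a cluster algebra is generated by the cluster variables reachable from one seed --- is consistent with that strategy and is the natural route.

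One point deserves to be made explicit rather than left implicit in ``extends uniquely to a bijection between all cluster variables.'' The Proposition asserts the formula $\Delta^{(s)}_{c^k(\varpi_i),w(\varpi_i)}\mapsto \underline{Q}_{w(\varpi_i),\,q^{2(s+k)-\xi_i}}$ for \emph{all} $w\in W$, whereas the initial seed $\Gamma$ only carries the labels with $w=\widetilde{c}^{\,l}(\varpi_i)$ (and $m_i-1\le k+l\le m_i$). Matching initial seeds gives you an abstract isomorphism of cluster algebras, but not yet the stated identification of the remaining minors with the remaining $Q$-variables. To get that, you must exhibit, for each $w$, a mutation sequence (naturally indexed by a reduced word, moving from $e$ to $w_0$) that produces $\Delta^{(s)}_{c^k(\varpi_i),w(\varpi_i)}$ on the band side, and verify inductively on $\ell(w)$ that the corresponding mutations on the $\mathcal{A}_{w_0}$ side produce $\underline{Q}_{w(\varpi_i),\cdot}$, using at each step the generalized minor identity of \cite[Theorem~1.17]{FZ} against the $QQ$-relation for the pair $(w,ws_j)$. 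You gesture at this when you discuss the correspondence $(c^k,\widetilde{c}^{\,l})\leftrightarrow w$ and the bookkeeping of $a_{ij}$, $b_{ij}$, but it should be a separate inductive step, since it is exactly where the content of the Proposition beyond ``the two cluster algebras are isomorphic'' lives. Your closing remarks about the spectral-parameter bookkeeping and about targeting $\mathbb{C}\otimes\mathcal{A}_{w_0}$ rather than the topological completion $K_0(\mathcal{O}^{\mathrm{shift}}_{\mathbb{Z}})$ correctly identify the remaining delicate points.
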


In this isomorphism, the exchange relations at green or red vertices of the initial
seed of $R(G,c)$ become instances of $QQ$-system relations. 
As a consequence, one can regard the $QQ$-system relations as emerging from the
generalized minor identities of Fomin and Zelevinsky. This fact was first established by 
Koroteev and Zeitlin \cite{KZ} in their work on $q$-opers and $(G,q)$-Wronskians,
and it has been an important source of inspiration for defining the schemes $B(G,c)$.

\bigskip
{\bf Acknowledgements.}
We thank Edward Frenkel, Christof Geiss, David Hernandez, Masato Okado, and Nicolai Reshetikhin for fruitful exchanges concerning this work.
This work was partially supported by the MUR Excellence Department Project 2023--2027 awarded to the Department of Mathematics, University of Rome Tor Vergata CUP E83C18000100006, and the 
PRIN2022 CUP E53D23005550006.

\bigskip
\small
\noindent
\begin{tabular}{ll}
Luca {\sc Francone} & Università degli studi di Roma Tor Vergata,\\
&Dipartimento di Matematica,\\
&Via della Ricerca Scientifica 1, 00133 Roma \\
& email : {\tt francone@mat.uniroma2.it}
\\[5mm]
Bernard {\sc Leclerc}  & Universit\'e de Caen Normandie,\\
&CNRS UMR 6139 LMNO,\\ &14032 Caen, France\\
&email : {\tt bernard.leclerc@unicaen.fr}
\end{tabular}

\end{document}